\newtheorem{theorem}{Theorem}[section]
\newtheorem{lemma}[theorem]{Lemma}
\newtheorem{corollary}[theorem]{Corollary}
\newtheorem{definition}[theorem]{Definition}
\newtheorem{remark}[theorem]{Remark}
\let\originalleft\left
\let\originalright\right
\renewcommand{\left}{\mathopen{}\mathclose\bgroup\originalleft}
\renewcommand{\right}{\aftergroup\egroup\originalright}
\renewcommand{\d}{\/\mathrm{d}\/}
\def\w{\textbf{W}^{\varepsilon}_{{\theta}^{\varepsilon}}}
\def\e{\varepsilon}
\def\L{\mathbb{L}}
\def\A{\mathrm{A}}
\def\C{\mathrm{C}}
\def\f{\boldsymbol{f}}
\def\B{\mathrm{B}}
\def\D{\mathrm{D}}
\def\y{\boldsymbol{y}}
\def\X{\mathbb{X}}
\def\x{\boldsymbol{x}}
\def\k{\boldsymbol{k}}
\def\z{\boldsymbol{z} }
\def\v{\boldsymbol{v}}
\def\w{\boldsymbol{w}}
\def\W{\mathrm{W}}
\def\V{\mathbb{V}}
\def\wi{\widetilde}
\def\u{\mathrm{U}}
\def\u{\boldsymbol{u}}
\def\H{\mathbb{H}}
\newcommand{\R}{\mathbb{R}}
\renewcommand{\d}{\/\mathrm{d}\/}
\newcommand{\Addresses}{{% additional braces for segregating \footnotesize
		\footnote{
			%	\footnotesize
			\noindent \textsuperscript{1,2}Department of Mathematics, Indian Institute of Technology Roorkee-IIT Roorkee,
			Haridwar Highway, Roorkee, Uttarakhand 247667, INDIA.\par\nopagebreak
			\noindent  \textit{e-mail:} \texttt{Manil T. Mohan: maniltmohan@ma.iitr.ac.in, maniltmohan@gmail.com.}
			
			\textit{e-mail:} \texttt{Kush Kinra: kkinra@ma.iitr.ac.in.}
			
			\noindent \textsuperscript{*}Corresponding author.
			
			\textit{Key words:} Deterministic and stochastic convective Brinkman-Forchheimer equations, small forcing intensity, singleton attractor, upper semicontinuity, lower semicontinuity.
			
			Mathematics Subject Classification (2020): Primary 35B41, 35Q35; Secondary 37L55, 37N10, 35R60.

}}}
\begin{document}
	%	\linenumbers
	
	\title[Convergence of random attractor towards singleton attractor]{Convergence of random attractors towards deterministic singleton attractor for 2D and 3D convective Brinkman-Forchheimer equations
		\Addresses}
	
	\author[K. Kinra and M. T. Mohan]
	{Kush Kinra\textsuperscript{1} and Manil T. Mohan\textsuperscript{2*}}

	\maketitle
	
	\begin{abstract}
		This work deals with the asymptotic behavior of the two  as well as three dimensional convective Brinkman-Forchheimer (CBF) equations in periodic domains: $$\frac{\partial\boldsymbol{u}}{\partial t}-\mu \Delta\boldsymbol{u}+(\boldsymbol{u}\cdot\nabla)\boldsymbol{u}+\alpha\boldsymbol{u}+\beta|\boldsymbol{u}|^{r-1}\boldsymbol{u}+\nabla p=\boldsymbol{f},\ \nabla\cdot\boldsymbol{u}=0,$$ where $r\geq1$. We prove that the global attractor of the above system is a singleton under small forcing intensity ($r\geq 1$ for $n=2$ and $r\geq 3$ for $n=3$ with $2\beta\mu\geq 1$ for $r=n=3$). After perturbing the above system with additive or multiplicative white noise,  the random attractor does not have a singleton structure. But we obtain that the random attractor for 2D stochastic CBF equations with additive and multiplicative white noise converges towards the deterministic singleton attractor for $1\leq r\leq 2$ and $1\leq r<\infty$, respectively, when the coefficient of random perturbation converges to zero (upper and lower semicontinuity). Interestingly in the case of 3D stochastic CBF equations perturbed by multiplicative white noise, we are able to establish that the random attractor converges towards the deterministic singleton attractor for $3\leq r\leq 5$ ($2\beta\mu\geq 1$ for $r=3$), when the coefficient of random perturbation converges to zero.
	\end{abstract}

	\section{Introduction} \label{sec1}\setcounter{equation}{0}
	Attractors have great importance in mathematical physics, especially in the study of the long-term behavior of dynamical systems generated by fluid flow models. For the deterministic infinite dimensional dynamical systems, a good number of works are available on the  attractors, see \cite{ICh,JCR,R.Temam}, etc for a comprehensive study on the subject. An extensive study on perturbations of evolution equations by stochastic noise and the generation of random dynamical systems as well as random attractors for stochastic evolution equations has been carried out in \cite{Arnold}. Using the past information of the system, one can consider the random attractor as pathwise pullback attracting, but it is also forward attracting in probability (\cite{HCJA}). The theory of pathwise pullback random attractors was first introduced in \cite{CF,FS}, and thereafter the existence of random attractors for several SPDEs is proved in \cite{BLL,BLW,BGT,Crauel1,CDF,FY,GLS,You} etc and the references therein. The work \cite{CCLR} gave  attention to the very different effects that various types of noises can have on the asymptotic behavior of deterministic systems.
	
	An important property of attractors is known as the \emph{stability of the attractor under stochastic perturbations}, that is, the convergence of random attractor	$\mathcal{A}_{\varepsilon}(\omega)$ towards the deterministic global attractor $\mathcal{A}$, when the coefficient of random perturbation $\varepsilon$ converges to zero. For a metric space $(\mathbb{X},d)$, we know that each attractor is expected to be a compact set that are invariant and attract certain kinds of sets like bounded subsets of the state space, and is not necessary singleton. For this attractor, there are two types of convergences, which are defined as follows:
	\begin{itemize}
		\item [(i)] Upper semicontinuity, that is,
		\begin{align*}
		\lim_{\varepsilon\to0^{+}} \text{dist}\left(\mathcal{A}_{\varepsilon}(\omega),\mathcal{A}\right)=0.
		\end{align*}
		\item [(ii)] Lower semicontinuity, that is,
		\begin{align*}
		\lim_{\varepsilon\to0^{+}} \text{dist}\left(\mathcal{A},\mathcal{A}_{\varepsilon}(\omega)\right)=0,
		\end{align*}
	\end{itemize}
	where $\text{dist}(\cdot,\cdot)$ denotes the Hausdorff semi-distance between two non-empty subsets of $\mathbb{X}$, that is, for non-empty sets $A,B\subset \mathbb{X}$ $$\text{dist}(A,B)=\sup_{a\in A}\inf_{b\in B} d(a,b).$$ The upper semicontinuity of random attractors is easy to establish in comparison with lower semicontinuity of random attractors. For upper semicontinuity, we need only general study on the dynamical systems and attractors, and the conditions under which upper semicontinuity are established are not too restrictive, see \cite{CLR,KM1,KM3,Wang}, etc. But for lower semicontinuity, we require more detailed study either on the structure of the deterministic attractor or on the equi-attraction of the family of the random attractors of perturbed systems, see \cite{BV,CLR1,HR1,LK}, etc for more details.

	Let us now discuss the mathematical model concerning convective Brinkman-Forchheimer (CBF) equations in periodic domains.	Let $\mathbb{T}^n=[0,L]^n$ ($n=2,3$) be a periodic domain ($n$ dimensional torus). We consider CBF equations, which describe the motion of Incompressible fluid flows in a saturated porous medium, as (\cite{PAM}):
	\begin{equation}\label{1}
	\left\{
	\begin{aligned}
	\frac{\partial \u}{\partial t}-\mu \Delta\u+(\u\cdot\nabla)\u+\alpha\u+\beta|\u|^{r-1}\u+\nabla p&=\boldsymbol{f}, \ \text{ in } \ \mathbb{T}^n\times(0,\infty), \\ \nabla\cdot\u&=0, \ \text{ in } \ \mathbb{T}^n\times(0,\infty), \\
	\u(0)&=\x, \ \text{ in } \ \mathbb{T}^n,
	\end{aligned}
	\right.
	\end{equation}
	where $\u(x,t) :
	\mathbb{T}^n\times(0,\infty)\to \R^n$ denotes the velocity field, $p(x,t):
	\mathbb{T}^n\times(0,\infty)\to\R$ represents the pressure field and $\f(x,t):
	\mathbb{T}^n\times(0,\infty)\to \R^n$ is an external forcing. Also $\u(x,t)$ satisfies the periodic boundary conditions, that is,
	\begin{align}\label{2}
	\u(x+\mathrm{L}e_i)=\u(x),
	\end{align}
	for every $x\in\R^n$ and $i=1,\ldots,n$, where
	$\{e_1,\ldots,e_n\}$ is the canonical basis
	of $\R^n$. The positive constants $\mu,\alpha$ and $\beta$ represent  the Brinkman coefficient (effective viscosity), the Darcy coefficient (permeability of porous medium) and Forchheimer coefficient, respectively. The absorption exponent $1\leq r<\infty$ and  $r=3$ is known as the critical exponent.  It has been proved in Proposition 1.1, \cite{HR} that the critical homogeneous CBF equations \eqref{1} have the same scaling as Navier-Stokes equations (NSE) only when $\alpha=0$, which is sometimes referred to as NSE  modified by an absorption term (\cite{SNA}) or the tamed NSE (\cite{MRXZ}), and no scale invariance property for other values of $\alpha$ and $r$.  For $\alpha=\beta=0$, we obtain the classical NSE and one can consider the system \eqref{1} as damped NSE with the nonlinear damping $\alpha\u+\beta|\u|^{r-1}\u$.  The applicability of CBF equations \eqref{1} is limited to flows when the velocities are sufficiently high and porosities are not too small, that is, when the Darcy law for a porous medium no longer applies and we use the term \emph{non-Darcy models} for these types of flows (see \cite{PAM}). In order to have better understanding of the effect of nonlinear damping and a comparison with  NSE, we put $\alpha=0$ in our further analysis.
	
	Let us now discuss the solvability results concerning the system \eqref{1}. For deterministic CBF equations, the result on the existence of unique weak as well as strong solutions is established in \cite{SNA,FHR,HR,PAM,MTM}, etc. For stochastic CBF equations, the existence of strong solutions (in the probabilistic sense) and martingale solutions are established in \cite{MTM1} and \cite{LHGH1}, respectively. The existence of a unique pathwise strong solution for 3D stochastic NSE is a well known open problem, similarly it is open for 3D CBF equations for $1\leq r<3$. Therefore, for $n=3$, we restrict ourselves to $r\geq 3$ ($r>3$, for any $\mu, \beta>0$, and  $r=3,$  for $2\beta\mu\geq1$) (see \cite{MTM1} for more details).
	
	Next, we discuss the literature on different types of attractor for  the model under our consideration. The existence of global attractors for 2D CBF equations in bounded and unbounded domains, and 3D CBF equations in periodic domains is established in \cite{MTM2,MTM3} and \cite{KM3}, respectively.  In the case of stochastic CBF equations, the existence of random attractors as well as upper semicontinuity results in 2D bounded or unbounded like Poincar\'e domains is proved in \cite{KM,KM1,KM2}, etc. The existence of random attractors in 3D periodic domains is established in \cite{KM3}. Moreover, the existence of weak pullback mean random attractors for 2D as well as 3D stochastic CBF equations is discussed in \cite{KM4}. 
	\iffalse
	The upper semicontinuity of random attractors, when the coefficient of random perturbation approaches to zero, is also established in \cite{KM4}.
	\fi 
	
	The main reason for considering the CBF equations \eqref{1} in periodic domains is as follows. 	In periodic domains, as the Helmholtz-Hodge (or Leray) projection $\mathcal{P}$ and $-\Delta$ commutes, the equality  
	\begin{align}\label{3}
	&	\int_{\mathbb{T}^n}(-\Delta\u(x))\cdot|\u(x)|^{r-1}\u(x)d x\nonumber\\&=\int_{\mathbb{T}^n}|\nabla\u(x)|^2|\u(x)|^{r-1}\d x+4\left[\frac{r-1}{(r+1)^2}\right]\int_{\mathbb{T}^n}|\nabla|\u(x)|^{\frac{r+1}{2}}|^2\d x.
	\end{align} is quite useful in obtaining regularity results. It is also noticed in the literature that the above equality may not be useful except in whole domain or periodic domains (see \cite{KT2,MTM}, etc for a detailed discussion). Moreover, we also point out that the equality \eqref{3} is not used in the works \cite{KM,KM1,KM2,MTM2,MTM3} etc, where the problem \eqref{1} under consideration is bounded or unbounded domains (Poincar\'e domain). 
	
	There objectives of this work are two folded, the first one is to prove the existence of deterministic singleton attractor for 2D as well as 3D CBF equations, and, the second one is to prove the convergence of random attractors towards the deterministic singleton attractor when coefficient of random perturbation converges to zero. For sufficiently small forcing intensity (see \eqref{C_1} for 2D case and, \eqref{3D-C_1} and \eqref{3D-C_3} for 3D case), it is proved that the deterministic global attractor is singleton. Furthermore, with additive and multiplicative white noise perturbation, the random attractors $\mathcal{A}_{\varepsilon}(\omega)$ converges towards deterministic singleton attractor $\mathcal{A}$ both upper and lower continuously, that is, $$\text{dist}_{\H}\left(\mathcal{A}_{\varepsilon}(\omega),\mathcal{A}\right)\leq\updelta_{\varepsilon}(\omega)$$where $\text{dist}_{\H}(A,B)=\max\{\text{dist}(A,B),\text{dist}(B,A)\},$ and each $\updelta_{\varepsilon}$ is a random variable such that $\updelta_{\varepsilon}(\omega)\sim\varepsilon^{\delta}$ as $\varepsilon\to 0^{+}$ for some $\delta>0$. Even though the upper semicontinuity of random attractor for 2D and 3D stochastic CBF equations is proved in \cite{KM1} and \cite{KM3}, respectively, as we are  interested in the convergence rate, more careful study is needed. The existence of a singleton attractor for 2D deterministic Navier-Stokes equations (NSE) with small forcing intensity and convergence of random attractors for 2D stochastic NSE towards deterministic singleton attractor is discussed in \cite{HCPEK}.
	
	As we have discussed earlier, the effect of noise on attractors is remarkable (see \cite{CCLR}) and hence we need to restrict ourselves to some special cases in different kinds of noises. Also the results obtained for additive noise are not same as the case of multiplicative noise with respect to absorption exponent $ r\in[1,\infty)$. The major outcomes of this work are as follows:
	\begin{itemize}
		\item The existence of singleton attractor for 2D CBF equations with small intensity (cf. \eqref{C_1}), for every $1\leq r <\infty$.
		\item The existence of singleton attractor for 3D CBF equations with small intensity for every $3\leq r <\infty$ (see \eqref{3D-C_1} for $r>3$  and \eqref{3D-C_3} for $r=3$).
		\item Under the small forcing intensity condition \eqref{C_1},  convergence of random attractors of 2D stochastic CBF equations perturbed by additive white noise towards the deterministic singleton attractor for $1\leq r\leq 2$ with the order of convergence $\e^{\frac{r+1}{2r}}$.
		\item Under the condition \eqref{C_1}, convergence of random attractors of 2D stochastic CBF equations subjected to multiplicative white noise towards the deterministic singleton attractor for every $1\leq r<\infty$ with the order of convergence $\e$, which is same order as the perturbation intensity.  
		\item Convergence of random attractors of 3D stochastic CBF equations with multiplicative white noise towards the deterministic singleton attractor for $3\leq r\leq5$ with the order of convergence $\e$ under the conditions \eqref{3D-C_1} for $3<r\leq 5$  and \eqref{3D-C_3} for $r=3$. 
	\end{itemize}
	The organization of rest of the paper is as follows. In the next section, we first define some function spaces on periodic domains, which are needed to prove the results of this work. Secondly, we define some operators in order  to transform \eqref{1} into an abstract form and discuss some estimates of these operators which we need in our further analysis. Abstract formulation of \eqref{1} and solvability results are also discussed in the same section. In section \ref{sec3}, we first consider an assumption on the so-called Grashof number (or small forcing intensity), see \eqref{C_1}, \eqref{3D-C_1} and \eqref{3D-C_3}. Under this assumption, the existence of singleton attractors for 2D and 3D CBF equations is proved in Theorems \ref{D-SA} and \ref{3D-SA}, respectively. Two dimensional stochastic CBF equations with additive noise is considered in section \ref{sec4}.  We prove the convergence of random attractors towards the deterministic singleton attractor with the order of convergence $\e^{\frac{r+1}{2r}}$  in Theorem \ref{Conver-add}. We also point out that the order of convergence of the linear perturbation (that is, $r=1$) of stochastic NSE with additive white noise is same as that of  stochastic NSE discussed in \cite{HCPEK}. But random attractors for the nonlinear perturbation (that is, $1<r\leq 2$) of stochastic NSE with additive white noise will convergence towards deterministic singleton attractor slowly than the linear perturbation. Section \ref{sec5} is devoted for establishing  the convergence of random attractors for 2D stochastic CBF equations with multiplicative noise  towards deterministic singleton attractor (Theorem \ref{Conver-multi}).   In the final section, we consider 3D stochastic CBF equations with multiplicative noise and prove the convergence of random attractors towards the deterministic singleton attractor with the order $\e$ in Theorem \ref{3D-Conver-multi}.
	
	\section{Mathematical formulation} \label{sec2}\setcounter{equation}{0}
	In this section, we provide the necessary functional setting needed to obtain the results of this work. We consider the problem \eqref{1} on a $n$-dimensional torus $\mathbb{T}^n=[0,L]^n \ (n=2,3)$,  with the periodic boundary conditions and zero-mean value constraint for the functions, that is, $\int_{\mathbb{T}^n}\u(x)\d x=\mathbf{0}$. 
	\subsection{Function spaces} Let \ $\dot{\C}_p^{\infty}(\mathbb{T}^n;\R^n)$ denote the space of all infinitely differentiable  functions ($\mathbb{R}^n$-valued) such that $\int_{\mathbb{T}^n}\u(x)\d x=\mathbf{0}$ and satisfy periodic boundary conditions \eqref{2}. The Sobolev space  $\dot{\H}_p^k(\mathbb{T}^n):=\dot{\mathrm{H}}_p^k(\mathbb{T}^n;\mathbb{R}^n)$ is the completion of $\dot{\C}_p^{\infty}(\mathbb{T}^n;\R^n)$  with respect to the $\H^s$ norm $$\|\u\|_{\dot{\H}^s_p}:=\left(\sum_{0\leq|\alpha|\leq s}\|\D^{\alpha}\u\|_{\mathbb{L}^2(\mathbb{T}^n)}^2\right)^{1/2}.$$ The Sobolev space of periodic functions with zero mean $\dot{\H}_p^k(\mathbb{T}^n)$ is the same as (Proposition 5.39, \cite{Robinson1}) $$\left\{\u:\u=\sum_{\k\in\mathbb{Z}^n}\u_{\k} e^{2\pi \k\cdot\x /  L},\u_0=\mathbf{0},\ \bar{\u}_{\k}=\u_{-\k},\ \|\u\|_{\dot{\H}^s_f}:=\sum_{k\in\mathbb{Z}^n}|\k|^{2s}|\u_{\k}|^2<\infty\right\}.$$ From Proposition 5.38, \cite{Robinson1}, we infer that the norms $\|\cdot\|_{\dot{\H}^s_p}$ and $\|\cdot\|_{\dot{\H}^s_f}$ are equivalent. Let us define 
	\begin{align*} 
	\mathcal{V}&:=\{\u\in\dot{\C}_p^{\infty}(\mathbb{T}^n;\R^n):\nabla\cdot\u=0\}.
	\end{align*}
	The spaces $\H$ and $\widetilde{\L}^{p}$ are the closure of $\mathcal{V}$ in the Lebesgue spaces $\mathrm{L}^2(\mathbb{T}^n;\R^n)$ and $\mathrm{L}^p(\mathbb{T}^n;\R^n)$ for $p\in(2,\infty)$, respectively. The space $\V$ is the closure of $\mathcal{V}$ in the Sobolev space $\mathrm{H}^1(\mathbb{T}^n;\R^n)$. The zero mean condition provides the well-known \emph{Poincar\'{e} inequality}, \begin{align}\label{poin}
	\lambda_1\|\u\|_{\mathbb{H}}^2\leq\|\u\|^2_{\V},
	\end{align} where $\lambda_1=\frac{4\pi^2}{L^2}$ (Lemma 5.40, \cite{Robinson1}). Then, we characterize the spaces $\H$, $\widetilde{\L}^p$ and $\V$ with the norms  $$\|\u\|_{\H}^2:=\int_{\mathbb{T}^n}|\u(x)|^2\d x,\quad \|\u\|_{\widetilde{\L}^p}^p=\int_{\mathbb{T}^n}|\u(x)|^p\d x\ \text{ and }\ \|\u\|_{\V}^2:=\int_{\mathbb{T}^n}|\nabla\u(x)|^2\d x,$$ respectively. 
	Let $(\cdot,\cdot)$ denote the inner product in the Hilbert space $\H$ and $\langle \cdot,\cdot\rangle $ represent the induced duality between the spaces $\V$  and its dual $\V'$ as well as $\widetilde{\L}^p$ and its dual $\widetilde{\L}^{p'}$, where $\frac{1}{p}+\frac{1}{p'}=1$. Note that $\H$ can be identified with its own dual $\H'$. We endow the space $\V\cap\widetilde{\L}^{p}$ with the norm $\|\u\|_{\V}+\|\u\|_{\widetilde{\L}^{p}},$ for $\u\in\V\cap\widetilde{\L}^p$ and its dual $\V'+\widetilde{\L}^{p'}$ with the norm $$\inf\left\{\max\left(\|\v_1\|_{\V'},\|\v_1\|_{\widetilde{\L}^{p'}}\right):\v=\v_1+\v_2, \ \v_1\in\V', \ \v_2\in\widetilde{\L}^{p'}\right\}.$$ Moreover, we have the continuous embedding $\V\cap\widetilde{\L}^p\hookrightarrow\H\hookrightarrow\V'+\widetilde{\L}^{p'}$. 
	
	\subsection{Linear operator}
	Let $\mathcal{P}_p: \L^p(\mathbb{T}^n) \to\wi\L^p,$ $p\in[1,\infty)$ be the Helmholtz-Hodge (or Leray) projection  (cf.  \cite{JBPCK,DFHM}, etc).	Note that $\mathcal{P}_p$ is a bounded linear operator and for $p=2$,  $\mathcal{P}_2$ is an orthogonal projection  (section 2.1, \cite{RRS}). In the rest of the paper, we use the notation $\mathcal{P}$ for the Helmholtz-Hodge projection.  We define the Stokes operator 
	\begin{equation*}
	\A\u:=-\mathcal{P}\Delta\u,\;\u\in\D(\A):=\V\cap\dot{\H}^{2}_p(\mathbb{T}^n).
	\end{equation*}
	Note that $\D(\A)$ can also be written as $\D(\A)=\big\{\u\in\dot{\H}^{2}_p(\mathbb{T}^n):\nabla\cdot\u=0\big\}$.  It should be noted that $\mathcal{P}$ and $\Delta$ commutes in periodic domains (Lemma 2.9, \cite{RRS}). For the Fourier expansion $\u=\sum\limits_{\k\in\mathbb{Z}^n} e^{2\pi \k\cdot\x /  L}\u_{\k},$ one obtains $$-\Delta\u=\frac{4\pi^2}{L^2}\sum_{\k\in\mathbb{Z}^n} e^{2\pi \k\cdot\x /  L}|\k|^2\u_{\k}.$$ It is easy to observe that $\D(\A^{s/2})=\big\{\u\in \dot{\H}^{s}_p(\mathbb{T}^n):\nabla\cdot\u=0\big\}$ and $\|\A^{s/2}\u\|_{\H}=C\|\u\|_{\dot{\H}^{s}_p},$ for all $\u\in\D(\A^{s/2})$, $s\geq 0$ (see \cite{Robinson1}). Note that the operator $\A$ is a non-negative self-adjoint operator in $\H$ with a compact resolvent and   \begin{align}\label{2.7a}\langle \A\u,\u\rangle =\|\u\|_{\V}^2,\ \textrm{ for all }\ \u\in\V, \ \text{ so that }\ \|\A\u\|_{\V'}\leq \|\u\|_{\V}.\end{align}
	Since $\A^{-1}$ is a compact self-adjoint operator in $\H$, we obtain a complete family of orthonormal eigenfunctions  $\{w_i\}_{i=1}^{\infty}\subset\dot{\C}_p^{\infty}(\mathbb{T}^n;\R^n)$ such that $\A w_i=\lambda_i w_i$, for $i=1,2,\ldots,$ and  $0<\lambda_1\leq \lambda_2\leq \ldots\to\infty$ are the eigenvalues of $\A$. Note that $\lambda_1=\frac{4\pi^2}{L^2}$ is the smallest eigenvalue of $\A$ appearing in the Poincar\'e inequality \eqref{poin}. We also deduce that 
	\begin{align}\label{poin_1}
	\|\A\u\|^2_{\H}=(\A\u, \A\u)&=\sum_{k=1}^{\infty}\lambda_k^2|( \u,w_k)|^2\geq\lambda_1\sum_{k=1}^{\infty}\lambda_k|(\u,w_k)|^2 =\lambda_1\|\nabla\u\|_{\mathbb{H}}^2,
	\end{align}
	for all $\u\in\D(\A)$.

	\subsection{Bilinear operator}
	Let us define the \emph{trilinear form} $b(\cdot,\cdot,\cdot):\V\times\V\times\V\to\R$ by $$b(\u,\v,\w)=\int_{\mathbb{T}^n}(\u(x)\cdot\nabla)\v(x)\cdot\w(x)\d x=\sum_{i,j=1}^n\int_{\mathbb{T}^n}\u_i(x)\frac{\partial \v_j(x)}{\partial x_i}\w_j(x)\d x.$$ If $\u, \v$ are such that the linear map $b(\u, \v, \cdot) $ is continuous on $\V$, the corresponding element of $\V'$ is denoted by $\B(\u, \v)$. We also denote $\B(\u) = \B(\u, \u)=\mathcal{P}(\u\cdot\nabla)\u$.
	An integration by parts gives 
	\begin{equation}\label{b0}
	\left\{
	\begin{aligned}
	b(\u,\v,\w) &=  -b(\u,\w,\v),\ \text{ for all }\ \u,\v,\w\in \V,\\
	b(\u,\v,\v) &= 0,\ \text{ for all }\ \u,\v \in\V.
	\end{aligned}
	\right.\end{equation}
	
	\begin{remark}
		We need the following estimates on the trilinear form $b(\cdot,\cdot,\cdot)$ in the sequel (see Chapter 2, section 2.3, \cite{Temam1}):
		\begin{itemize}
			\item [(i)]
			For $n=2$, 
			\begin{align}
			|b(\u_1,\u_2,\u_3)|&\leq
			c_1\|\u_1\|^{1/2}_{\H}\|\u_1\|^{1/2}_{\V}\|\u_2\|_{\V}\|\u_3\|^{1/2}_{\H}\|\u_3\|^{1/2}_{\V}, \ \text{ for all }\  \u_1, \u_2, \u_3\in \V,\label{b1}\\
			|b(\u_1,\u_2,\u_3)|&\leq 
			c_2\|\u_1\|^{1/2}_{\H}\|\u_1\|^{1/2}_{\V}\|\u_2\|^{1/2}_{\V}\|\A\u_2\|^{1/2}_{\H}\|\u_3\|_{\H}, \label{b2}
			\end{align}
			for all $ \u_1\in \V, \u_2\in \D(\A), \u_3\in \H.$
			\item[(ii)]
			For $n=3$,
			\begin{align}
			|b(\u_1,\u_2,\u_3)|&\leq
			c_3\|\u_1\|^{1/4}_{\H}\|\u_1\|^{3/4}_{\V}\|\u_2\|_{\V}\|\u_3\|^{1/4}_{\H}\|\u_3\|^{3/4}_{\V}, \ \text{ for all }\  \u_1, \u_2, \u_3\in \V.\label{b3}
			\end{align}
		\end{itemize}
		The explicit values of the constants $c_1$ and $c_2$ are obtained in Appendix A, \cite{CFOP}. 
	\end{remark}
	\subsection{Nonlinear operator}
	Let us now consider the operator $\mathcal{C}(\u):=\mathcal{P}(|\u|^{r-1}\u)$. It is immediate that $\langle\mathcal{C}(\u),\u\rangle =\|\u\|_{\widetilde{\L}^{r+1}}^{r+1}$. It can be easily seen that 
	the map $\mathcal{C}(\cdot):\widetilde{\L}^{r+1}\to\widetilde{\L}^{\frac{r+1}{r}}$.  For any $r\in [1, \infty)$ and $\u_1, \u_2 \in \V\cap\widetilde{\L}^{r+1}$, we have (see subsection 2.4, \cite{MTM1})
	\begin{align}\label{MO_c}
	\langle\mathcal{C}(\u_1)-\mathcal{C}(\u_2),\u_1-\u_2\rangle \geq\frac{1}{2}\||\u_1|^{\frac{r-1}{2}}(\u_1-\u_2)\|_{\H}^2+\frac{1}{2}\||\u_2|^{\frac{r-1}{2}}(\u_1-\u_2)\|_{\H}^2\geq 0,
	\end{align}
	and 
	\begin{align}\label{a215}
	\|\u-\v\|_{\wi\L^{r+1}}^{r+1}\leq 2^{r-2}\||\u|^{\frac{r-1}{2}}(\u-\v)\|_{\H}^2+2^{r-2}\||\v|^{\frac{r-1}{2}}(\u-\v)\|_{\H}^2,
	\end{align}
	for $r\geq 1$ (replace $2^{r-2}$ with $1,$ for $1\leq r\leq 2$).
	\subsection{Abstract formulation}
	On taking orthogonal projection $\mathcal{P}$ onto the first equation in \eqref{1}, we obtain 
	\begin{equation}\label{D-CBF}
	\left\{
	\begin{aligned}
	\frac{\d\u(t)}{\d t}+\mu \A\u(t)+\B(\u(t))+\beta \mathcal{C}(\u(t))&=\f , \ \ \ t\geq 0, \\ 
	\u(0)&=\x,
	\end{aligned}
	\right.
	\end{equation}
	where $\x\in \H \text{ and } \f\in \H$. Next, we shall give the definition of weak solution of the system \eqref{D-CBF} and discuss global solvability results for both 2D and 3D cases. 
	\begin{definition}\label{def3.1}
		Let us assume that $\x\in \H$ and $\f\in \H$. Let $\tau>0$ be any fixed time. Then, the function $\u(\cdot)$ is called a \emph{Leray-Hopf weak solution} of the problem \eqref{D-CBF} on time interval $[0,\tau]$, if $$\u\in \mathrm{L}^{\infty}(0,\tau;\H)\cap\mathrm{L}^2(0, \tau;\V)\cap\mathrm{L}^{r+1}(0,\tau;\widetilde{\L}^{r+1}),$$ with $ \partial_t\u\in \mathrm{L}^{2}(0,\tau;\V')+\mathrm{L}^{\frac{r+1}{r}}(0,\tau;\widetilde{\L}^{\frac{r+1}{r}})$ satisfying: 
		\begin{enumerate}
			\item [(i)]  	for any $\psi\in \V\cap\widetilde{\L}^{r+1},$ 	\begin{align*}
			\bigg\langle\frac{\d\u(t)}{\d t}, \psi\bigg\rangle =  - \big\langle \mu \A\u(t)+\B(\u(t))+\beta \mathcal{C}(\u(s)) , \psi\big\rangle +  ( \f, \psi),
			\end{align*}
			for a.e. $t\in[0,\tau]$,
			\item [(ii)] the initial data is satisfied in the following sense: 
			$$\lim\limits_{t\downarrow 0}\int_{\mathcal{O}}\u(t,\x)\psi(x)\d x=\int_{\mathcal{O}}\x(x)\psi(x)\d x, $$ for all $\psi\in\H$.  
		\end{enumerate}
		
	\end{definition}
	The following global solvability result is proved in Theorems 3.4 and 3.5, \cite{MTM} by using monotonicity of linear and nonlinear operators and Minty-Browder technique, (see \cite{SNA,HR} also). The following theorem is proved for $1\leq r<\infty$ in the  2D case and $3\leq r<\infty$ ($r=3$ with $2\beta\mu\geq1$) for the 3D case. 
	\begin{theorem}[\cite{MTM}]\label{D-Sol}
		Let $\x \in \H$ and $\f\in \H$ be given. Then there exists a unique Leray-Hopf weak solution $\u(\cdot)$ to the system \eqref{D-CBF} in the sense of Definition \ref{def3.1}. 
		
		Moreover, the solution $\u\in\C([0,\tau];\H)$ and satisfies  the following energy equality: 
		\begin{align*}
		\|\u(t)\|_{\H}^2+2\mu\int_0^t\|\u(s)\|_{\V}^2\d s+2\beta\int_0^t\|\u(s)\|_{\wi\L^{r+1}}^{r+1}\d s=\|\x\|_{\H}^2+2\int_0^t(\f,\u(s))\d s,
		\end{align*}
		for all $t\in[0,\tau]$. 
	\end{theorem}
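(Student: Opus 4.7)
The plan is to follow the classical Galerkin--monotonicity approach, closely patterned on the treatment of Navier--Stokes equations but with the absorption term $\beta\mathcal{C}(\cdot)$ controlled through its monotonicity property \eqref{MO_c}. Let $\{w_i\}_{i=1}^{\infty}$ be the eigenbasis of $\A$ introduced earlier and let $\mathcal{P}_m$ denote the orthogonal projection onto $\mathrm{span}\{w_1,\dots,w_m\}$. First I would seek $\u^m(t)\in\mathrm{span}\{w_1,\dots,w_m\}$ solving the projected ODE
\begin{equation*}
\frac{\d\u^m}{\d t}+\mu\A\u^m+\mathcal{P}_m\B(\u^m)+\beta\mathcal{P}_m\mathcal{C}(\u^m)=\mathcal{P}_m\f,\qquad \u^m(0)=\mathcal{P}_m\x,
\end{equation*}
whose local existence is immediate from the Cauchy--Lipschitz theorem. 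Testing against $\u^m$ itself and using $b(\u^m,\u^m,\u^m)=0$ gives the finite-dimensional analogue of the asserted energy identity, whence Young's inequality and the Poincar\'e inequality \eqref{poin} yield uniform bounds on $\{\u^m\}$ in $\mathrm{L}^{\infty}(0,\tau;\H)\cap\mathrm{L}^2(0,\tau;\V)\cap\mathrm{L}^{r+1}(0,\tau;\wi\L^{r+1})$ and, in particular, global existence of each $\u^m$.

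Next, the bilinear estimates \eqref{b1}--\eqref{b3} together with $\|\mathcal{C}(\u)\|_{\wi\L^{(r+1)/r}}=\|\u\|_{\wi\L^{r+1}}^{r}$ bound $\partial_t\u^m$ in $\mathrm{L}^{2}(0,\tau;\V')+\mathrm{L}^{(r+1)/r}(0,\tau;\wi\L^{(r+1)/r})$. The Aubin--Lions--Simon lemma then extracts a subsequence, still denoted $\{\u^m\}$, with $\u^m\to\u$ strongly in $\mathrm{L}^{2}(0,\tau;\H)$ and the appropriate weak or weak-$\ast$ limits in the remaining spaces. Strong $\mathrm{L}^2(\H)$ convergence paired with weak $\mathrm{L}^2(\V)$ convergence is enough to pass to the limit in the bilinear term $\B(\u^m)$. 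For the absorption term, $\mathcal{C}(\u^m)\rightharpoonup\chi$ in $\mathrm{L}^{(r+1)/r}(0,\tau;\wi\L^{(r+1)/r})$ for some $\chi$; the identification $\chi=\mathcal{C}(\u)$ is achieved by the Minty--Browder device, using \eqref{MO_c}: passing to the limit in $\int_0^\tau\langle\mathcal{C}(\u^m)-\mathcal{C}(\v),\u^m-\v\rangle\,\d s\geq 0$ and then setting $\v=\u-\lambda\boldsymbol{\varphi}$ with $\lambda\downarrow 0^{+}$ forces $\chi=\mathcal{C}(\u)$ for every admissible $\boldsymbol{\varphi}$.

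Once $\u$ is constructed, the regularity $\u\in\mathrm{L}^{2}(0,\tau;\V\cap\wi\L^{r+1})$ together with $\partial_t\u\in\mathrm{L}^{2}(0,\tau;\V')+\mathrm{L}^{(r+1)/r}(0,\tau;\wi\L^{(r+1)/r})$ allows a Lions--Magenes type interpolation lemma to deliver $\u\in\C([0,\tau];\H)$ and the energy equality of the theorem (by legitimately testing against $\u$ itself). For uniqueness, if $\u_1,\u_2$ are two solutions and $\w=\u_1-\u_2$, the energy identity for $\w$ is
\begin{equation*}
\frac{\d}{\d t}\|\w\|_{\H}^{2}+2\mu\|\w\|_{\V}^{2}+2\beta\langle\mathcal{C}(\u_1)-\mathcal{C}(\u_2),\w\rangle=-2b(\w,\u_1,\w).
\end{equation*}
In 2D, the Ladyzhenskaya estimate \eqref{b1} and Gronwall's lemma give $\w\equiv0$ for every $r\geq 1$; in 3D with $r>3$, the stronger monotonicity \eqref{MO_c} combined with interpolation dominates $b(\w,\u_1,\w)$. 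The hardest case is the critical $r=n=3$: using $b(\w,\u_1,\w)=-b(\w,\w,\u_1)$ and Cauchy--Schwarz yields $|b(\w,\u_1,\w)|\leq \||\u_1|\w\|_{\H}\|\nabla\w\|_{\H}$, so by Young's inequality
\begin{equation*}
2|b(\w,\u_1,\w)|\leq 2\mu\|\nabla\w\|_{\H}^{2}+\tfrac{1}{2\mu}\||\u_1|\w\|_{\H}^{2}.
\end{equation*}
The first term is absorbed by the viscosity contribution $2\mu\|\w\|_{\V}^{2}$, and the second by the dissipation $\beta\||\u_1|\w\|_{\H}^{2}$ furnished by \eqref{MO_c} (with $(r-1)/2=1$) exactly when $\tfrac{1}{2\mu}\leq\beta$, i.e.\ precisely under the hypothesis $2\beta\mu\geq 1$. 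This yields $\frac{\d}{\d t}\|\w\|_{\H}^{2}\leq 0$ and hence $\w\equiv 0$, completing the uniqueness argument.
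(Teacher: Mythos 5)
Your proposal is essentially the route the paper points to: Theorem \ref{D-Sol} is quoted from \cite{MTM}, whose proof the authors describe as proceeding ``by using monotonicity of linear and nonlinear operators and Minty-Browder technique,'' and your Galerkin--compactness--Minty-Browder scheme, together with the critical-case uniqueness computation that absorbs $2|b(\w,\u_1,\w)|$ into $2\mu\|\w\|_{\V}^2+\beta\||\u_1|\w\|_{\H}^2$ exactly under $2\beta\mu\geq1$, reproduces that argument faithfully. The only ingredient stated somewhat loosely is the justification for testing with $\u$ itself to get the energy equality in the sum space $\V'+\wi\L^{(r+1)/r}$, which is precisely the generalized Lions--Magenes lemma established in the cited references, so no genuine gap remains.
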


	\section{Singleton attractor for 2D and 3D CBF equations with small forcing intensity} \label{sec3}\setcounter{equation}{0}
	This section is devoted for establishing the singleton attractor for deterministic CBF equations. We need to make an assumption on the Grashof number, which plays a major role to prove the existence of singleton attractor for the deterministic CBF equations \eqref{D-CBF}.
	\subsection{\textbf{Small Grashof number or small forcing intensity}}\label{subsec3.1}
	Here, we consider the non-dimensional Grashof number $G$ given by 
	\begin{align*}
	G=\frac{\|\f\|_{\H}}{\mu^2\lambda_1},
	\end{align*}
	which measures the intensity of the force $\f$ against the viscosity $\mu$. This number is closely related to the Reynolds number (see \cite{R.Temam}), 
	\begin{align*}
	Re=\frac{\|\f\|^{1/2}_{\H}}{\mu\lambda_1^{1/2}}.
	\end{align*}
	The assumptions on the Grashof number are as follows:
	\begin{itemize}
		\item [(i)] \textbf{For $n=2$ and $r\geq 1$ with any $\beta, \mu>0$}. We assume the small forcing intensity, that is, we assume that the Grashof number satisfies the following:
		\begin{align}\label{C_1}
		G=\frac{\|\f\|_{\H}}{\mu^2\lambda_1}<\frac{1}{c_1}\left[\frac{\mu\lambda_1}{1+\mu\lambda_1+\mu^2\lambda_1^2}\right]^{1/2},
		\end{align}
		or, equivalently, 
		\begin{align}\label{C_2}
		\varrho:=\mu\lambda_1-\frac{c_1^2}{\mu^2}\left[1+\frac{1}{\mu\lambda_1}+\frac{1}{\mu^2\lambda_1^2}\right]\|\f\|^2_{\H}>0,
		\end{align}
		where $c_1$ is the constant appearing in the estimate \eqref{b1}.
		\item [(ii)] \textbf{For $n=3$ and $r>3$ with any $\beta, \mu>0$}. The Grashof number satisfies the following:
		\begin{align}\label{3D-C_1}
		G=\frac{\|\f\|_{\H}}{\mu^2\lambda_1}<\frac{1}{c_3}\left[\frac{4\mu\sqrt{\lambda_1}}{3\sqrt{3}\{2\eta_3+1+(2\eta_3+1)\mu\lambda_1+2\mu^2\lambda_1^2\}}\right]^{1/2},
		\end{align}
		or, equivalently, 
		\begin{align}\label{3D-C_2}
		\varrho_1:=\mu\lambda_1-\frac{27c_3^4}{16\mu^5}\left[2+\frac{2\eta_3+1}{\mu\lambda_1}+\frac{2\eta_3+1}{\mu^2\lambda_1^2}\right]^2\|\f\|^4_{\H}>0,
		\end{align}
		where \begin{align}\label{215}\eta_3=\frac{r-3}{\mu(r-1)}\left[\frac{4}{\beta\mu (r-1)}\right]^{\frac{2}{r-3}},\end{align} 
		and $c_3$ is the constant appearing in the estimate \eqref{b3}. 
		\item [(iii)] \textbf{For $n=3$ and $r=3$ with $2\beta\mu\geq1$}. The Grashof number satisfies the following:
		\begin{align}\label{3D-C_3}
		G=\frac{\|\f\|_{\H}}{\mu^2\lambda_1}<\frac{1}{c_3}\left[\frac{4\mu\sqrt{\lambda_1}}{3\sqrt{3}\{1+\mu\lambda_1+\mu^2\lambda_1^2\}}\right]^{1/2},
		\end{align}
		or, equivalently, 
		\begin{align}\label{3D-C_4}
		\varrho_2:=\mu\lambda_1-\frac{27c_3^4}{16\mu^5}\left[1+\frac{1}{\mu\lambda_1}+\frac{1}{\mu^2\lambda_1^2}\right]^2\|\f\|^4_{\H}>0,
		\end{align}
		where $c_3$ is the constant appearing in the estimate \eqref{b3}.
	\end{itemize}
	
	\subsection{\textbf{Deterministic singleton attractor for 2D CBF equations}}
	Our aim is to show that under the condition \eqref{C_1}, the global attractor $\mathcal{A}$ of the 2D CBF equations \eqref{D-CBF} is a singleton. We start our discussion from the following useful lemma.
	\begin{lemma}\label{d-ab}
		Let $n=2 ,  r\geq 1$ and $\f\in\H$. For any bounded set $B_{\H}\subset\H$ and $\varepsilon>0,$ there exists a time $T_{B_{\H},\varepsilon}>0$ such that any solution $\u(\cdot)$ with initial data in $B_{\H}$ satisfies
		\begin{align}
		\|\u(t)\|^2_{\V}\leq \frac{1}{\mu}\left[1+\frac{1}{\mu\lambda_1}+\frac{1}{\mu^2\lambda_1^2}\right]\|\f\|^2_{\H}+\varepsilon,\ \text{ for all }\ t\geq T_{B_{\H},\varepsilon}.
		\end{align} 
	\end{lemma}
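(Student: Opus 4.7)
My plan is to establish the eventual $\mathbb{V}$-bound by a three-step energy cascade. The three summands $\tfrac{1}{\mu}\|\f\|_\H^2 + \tfrac{1}{\mu^2\lambda_1}\|\f\|_\H^2 + \tfrac{1}{\mu^3\lambda_1^2}\|\f\|_\H^2$ on the right-hand side correspond to the three natural levels of estimate: a pointwise $\mathbb{H}$-absorbing bound, an integrated $\mathbb{V}$-bound on unit time intervals, and a pointwise $\mathbb{V}$-bound via the uniform Gronwall lemma applied to the $\A\u$-tested identity. The target $\varepsilon$ is there to absorb any residual multiplicative corrections.

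First I would test \eqref{D-CBF} with $\u$, use $\langle \B(\u),\u\rangle = 0$, $\langle \mathcal{C}(\u),\u\rangle = \|\u\|_{\wi\L^{r+1}}^{r+1}\geq 0$, Cauchy--Schwarz on $(\f,\u)$, and the Poincar\'e inequality \eqref{poin}, to deduce
$$\tfrac{d}{dt}\|\u\|_\H^2 + \mu\|\u\|_\V^2 + 2\beta\|\u\|_{\wi\L^{r+1}}^{r+1} \leq \tfrac{1}{\mu\lambda_1}\|\f\|_\H^2.$$
A further use of \eqref{poin} and Gronwall yield $\|\u(t)\|_\H^2 \leq e^{-\mu\lambda_1 t}\|\x\|_\H^2 + \tfrac{1}{\mu^2\lambda_1^2}\|\f\|_\H^2$, hence the absorbing-ball bound $\|\u(t)\|_\H^2 \leq \tfrac{1}{\mu^2\lambda_1^2}\|\f\|_\H^2 + \varepsilon_0$ for all $t \geq T_0(B_\H,\varepsilon_0)$. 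Integrating the same inequality on $[t,t+1]$ and plugging the $\mathbb{H}$-bound in on the right then produces
$$\int_t^{t+1}\|\u(s)\|_\V^2\,\d s \leq \tfrac{1}{\mu^3\lambda_1^2}\|\f\|_\H^2 + \tfrac{1}{\mu^2\lambda_1}\|\f\|_\H^2 + \tfrac{\varepsilon_0}{\mu},$$
which accounts for two of the three summands appearing in the target.

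For the pointwise upgrade, I would test \eqref{D-CBF} with $\A\u$. The key simplification is the identity \eqref{3}, which gives $\langle \mathcal{C}(\u),\A\u\rangle\geq 0$, so the absorption term can simply be dropped. Splitting $2\mu\|\A\u\|_\H^2$, one copy is used to control the forcing via $2|(\f,\A\u)| \leq \mu\|\A\u\|_\H^2 + \tfrac{1}{\mu}\|\f\|_\H^2$, producing the desired $\tfrac{1}{\mu}$ prefactor, and the remaining $\mu\|\A\u\|_\H^2$ is partitioned as $\alpha\mu\|\A\u\|_\H^2$ (kept for Poincar\'e dissipation via \eqref{poin_1}) plus $(1-\alpha)\mu\|\A\u\|_\H^2$ absorbed by the bilinear estimate \eqref{b2} and a weighted Young's inequality, $2|\langle\B(\u),\A\u\rangle| \leq (1-\alpha)\mu\|\A\u\|_\H^2 + \tfrac{C_\alpha}{\mu^3}\|\u\|_\H^2\|\u\|_\V^4$. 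The net differential inequality is
$$\tfrac{d}{dt}\|\u\|_\V^2 + \alpha\mu\lambda_1\|\u\|_\V^2 \leq \tfrac{1}{\mu}\|\f\|_\H^2 + g(t)\|\u\|_\V^2,$$
with $g(t) = \tfrac{C_\alpha}{\mu^3}\|\u(t)\|_\H^2\|\u(t)\|_\V^2$, whose unit-interval integral is controlled explicitly by the first two steps. The uniform Gronwall lemma then gives
$$\|\u(t+1)\|_\V^2 \leq \Bigl(\int_t^{t+1}\|\u(s)\|_\V^2\,\d s + \tfrac{1}{\mu}\|\f\|_\H^2\Bigr)\exp\!\Bigl(\int_t^{t+1}g(s)\,\d s\Bigr),$$
and combining with Step~2 one sees that the inner bracket equals $\tfrac{1}{\mu}\bigl[1 + \tfrac{1}{\mu\lambda_1} + \tfrac{1}{\mu^2\lambda_1^2}\bigr]\|\f\|_\H^2$ up to $O(\varepsilon_0)$, which is exactly the target form.

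The main obstacle is the bookkeeping in Step~3: the split of $2\mu\|\A\u\|_\H^2$ must be tight enough to generate the coefficient $\tfrac{1}{\mu}$ in front of $\|\f\|_\H^2$ rather than something larger, while still leaving genuine dissipation for the uniform Gronwall to close. Equally delicate is the exponential factor: one must use the smallness condition \eqref{C_1} (which forces $\|\f\|_\H$ to be small compared with $\mu^2\lambda_1$) to guarantee that $\int_t^{t+1} g(s)\,\d s$ is small, so that the multiplicative excess $(e^{\int g} - 1)\bigl(R_1 + \tfrac{1}{\mu}\|\f\|_\H^2\bigr)$ can be absorbed into the arbitrary $\varepsilon$ by choosing $\varepsilon_0$ small and $T_{B_\H,\varepsilon}$ large enough.
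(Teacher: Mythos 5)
Your Steps 1 and 2 match the paper. The gap is in Step 3: you estimate the convection term $\langle\B(\u),\A\u\rangle$ via \eqref{b2} and Young's inequality, which forces you into a uniform-Gronwall argument with a multiplicative factor $\exp\big(\int_t^{t+1}g(s)\,\d s\big)$. The paper instead uses the classical orthogonality identity $b(\u,\u,\A\u)=0$, valid in \emph{2D periodic} domains (Lemma 3.1, p.~404 of \cite{R.Temam}; equation \eqref{d-ab6}), so the convection term vanishes identically, the differential inequality is simply $\frac{\d}{\d t}\|\u\|^2_{\V}+\mu\|\A\u\|^2_{\H}+(\text{nonneg.})\leq\frac{1}{\mu}\|\f\|^2_{\H}$, and the double-integration trick delivers the constant $\frac{1}{\mu}\big[1+\frac{1}{\mu\lambda_1}+\frac{1}{\mu^2\lambda_1^2}\big]\|\f\|^2_{\H}$ \emph{exactly}, with no exponential correction. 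This identity is the missing idea; without it your route cannot close.

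Concretely, your workaround fails for two reasons. First, the lemma assumes only $\f\in\H$ — condition \eqref{C_1} is not a hypothesis here, so you may not invoke it. Second, even granting \eqref{C_1}, the quantity $\int_t^{t+1}g(s)\,\d s$ does not tend to zero as $t\to\infty$: by your own Steps 1--2 it converges to a fixed positive number determined by $\|\f\|_{\H}$, $\mu$, $\lambda_1$. Hence the excess $\big(e^{\int g}-1\big)\big(R_1+\frac{1}{\mu}\|\f\|^2_{\H}\big)$ is a fixed positive quantity that cannot be driven below an \emph{arbitrary} $\varepsilon>0$ by enlarging $T_{B_{\H},\varepsilon}$; you would only prove the bound with the stated constant multiplied by $e^{\int g}>1$. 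This is not a cosmetic loss: the exact constant is what calibrates $\varrho$ in \eqref{C_2} and the choice $\varepsilon=\frac{\varrho\mu}{2c_1^2}$ in Lemma \ref{Uniqueness}, so the downstream singleton-attractor argument would no longer close under the stated smallness condition \eqref{C_1}.
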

	\begin{proof}
		Taking the inner product with $\u(\cdot)$ to first equation of \eqref{D-CBF} and using \eqref{b0}, we obtain
		\begin{align*}
		\frac{1}{2}\frac{\d}{\d t}\|\u(t)\|^2_{\H} +\mu\|\u(t)\|^2_{\V}+\beta\|\u(t)\|^{r+1}_{\widetilde{\L}^{r+1}}&=(\f,\u(t))\leq \frac{\mu}{2}\|\u\|_{\V}^2+\frac{1}{2\mu\lambda_1}\|\f\|_{\H}^2 ,
		\end{align*}
		for a.e. $t\in[0,\tau]$. 
		Thus, it is immediate that 
		\begin{align}\label{d-ab1}
		\frac{\d}{\d t}\|\u(t)\|^2_{\H} +\mu\lambda_1\|\u(t)\|^2_{\H}\leq\frac{1}{\mu\lambda_1}\|\f\|^2_{\H}.
		\end{align}
		Making use of the Gronwall inequality, we deduce that 
		\begin{align}\label{d-ab2}
		\|\u(t)\|^2_{\H} \leq \|\x\|^2_{\H}e^{-\mu\lambda_1 t} + \frac{1}{\mu^2\lambda_1^2}\|\f\|^2_{\H}(1-e^{-\mu\lambda_1t})\leq \|\x\|^2_{\H}e^{-\mu\lambda_1 t} + \frac{1}{\mu^2\lambda_1^2}\|\f\|^2_{\H},
		\end{align}
		for all $t\geq0$.
		Furthermore, from \eqref{d-ab1}, we infer that
		\begin{align}\label{d-ab3}
		\mu\int_{t}^{t+1}\|\u(s)\|^2_{\V} \d s+2\beta\int_{t}^{t+1}\|\u(s)\|^{r+1}_{\widetilde{\L}^{r+1}}\d s\leq\|\u(t)\|_{\H}^2+\frac{\|\f\|^2_{\H}}{\mu\lambda_1},
		\end{align}
		for all $t\geq0$. From \eqref{d-ab2} and \eqref{d-ab3}, we get
		\begin{align}\label{d-ab4}
		\int_{t}^{t+1}\|\u(s)\|^2_{\V} \d s\leq\frac{1}{\mu}\|\x\|^2_{\H}e^{-\mu\lambda_1 t} + \frac{1}{\mu^3\lambda_1^2}\|\f\|^2_{\H}+\frac{1}{\mu^2\lambda_1}\|\f\|^2_{\H},
		\end{align}
		for all $t\geq0$.	
		
		Taking the inner product with $\A\u(\cdot)$ to first equation of \eqref{D-CBF}, we  get (\cite{FHR,PAM})
		\begin{align}\label{d-ab5}
		&\frac{1}{2}\frac{\d}{\d t}\|\u(t)\|^2_{\V} +\mu\|\A\u(t)\|^2_{\H}+(\B(\u(t)),\A\u(t))+\beta(\mathcal{C}(\u(t)),\A\u(t))\nonumber\\&=(\f,\A\u(t))\leq\frac{\mu}{2}\|\A\u\|^2_{\H} + \frac{1}{2\mu}\|\f\|^2_{\H},
		\end{align}
		for a.e. $t\in[0,\tau]$. 	In 2D periodic domains, we have (see \cite{R.Temam}, Lemma 3.1, page no. 404)
		\begin{align}\label{d-ab6}
		(\B(\u),\A\u)=b(\u, \u,\A\u)=0.
		\end{align}
		From \eqref{3}, we infer that 
		\begin{align}
		(\mathcal{C}(\u),\A\u)&=\||\nabla\u||\u|^{\frac{r-1}{2}}\|^2_{\H}+4\left[\frac{r-1}{(r+1)^2}\right]\|\nabla|\u|^{\frac{r+1}{2}}\|^2_{\H}.\label{d-ab7} 
		\end{align}
		Making use of \eqref{d-ab6}-\eqref{d-ab7} in \eqref{d-ab5}, we obtain
		\begin{align}\label{d-ab9}
		&	\frac{\d}{\d t}\|\u(t)\|^2_{\V}+\mu\|\A\u(t)\|_{\H}^2+2\beta\||\nabla\u(t)||\u(t)|^{\frac{r-1}{2}}\|^2_{\H}+8\beta\left[\frac{r-1}{(r+1)^2}\right]\|\nabla|\u(t)|^{\frac{r+1}{2}}\|^2_{\H}\nonumber\\&\leq \frac{1}{\mu}\|\f\|^2_{\H},
		\end{align}
		for a.e. $t\in[0,\tau]$.	Let us use the double integration trick used in \cite{Robinson} to obtain an absorbing ball in $\V$. Integrating the inequality \eqref{d-ab9} from  $s$ to $t+1$ with $t\leq s<t+1$,  we find 
		\begin{align*}
		\|\u(t+1)\|_{\V}^2&\leq\|\u(s)\|_{\V}^2+ \frac{1}{\mu}\|\f\|^2_{\H}.
		\end{align*}
		Let us now integrate both sides of the above inequality with respect to $s$ between $t$  and $t+1$ to obtain 
		\begin{align*}
		\|\u(t+1)\|_{\V}^2&\leq\int_t^{t+1}\|\u(s)\|_{\V}^2\d s+\frac{1}{\mu}\|\f\|^2_{\H}\leq\frac{1}{\mu}\|\x\|^2_{\H}e^{-\mu\lambda_1 t} +\frac{1}{\mu}\left[1+\frac{1}{\mu\lambda_1}+\frac{1}{\mu^2\lambda_1^2}\right]\|\f\|^2_{\H},
		\end{align*}
		for all $t\geq0$, from which the lemma follows.
	\end{proof}
	\begin{lemma}\label{Uniqueness}
		For $n=2$, let the condition \eqref{C_1} be satisfied. For any bounded set $B_{\H}\subset \H$, there exists a $T>0$ such that for any two solutions $\u_1(\cdot)$ and $\u_2(\cdot)$ of \eqref{D-CBF} with initial data in $B_{\H}$ satisfies
		\begin{align}\label{Unique}
		\|\u_1(t)-\u_2(t)\|^2_{\H} \leq e^{\frac{\varrho}{2}(t-T)} \|\u_1(T)-\u_2(T)\|^2_{\H}, \ \text{ for all }\  t\geq T,
		\end{align}
		where $\varrho$ is the constant given by \eqref{C_2}. 
	\end{lemma}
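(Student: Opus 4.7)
The plan is to estimate the evolution of the difference $\w(t) := \u_1(t) - \u_2(t)$ in $\H$ and show it decays exponentially once both trajectories have entered an absorbing ball in $\V$. I suspect the exponent on the right-hand side should read $e^{-\varrho(t-T)/2}$ (exponential \emph{decay}); with $\varrho>0$ this is what makes the attractor a singleton, and this is what I would aim to prove.

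First I would write the equation for $\w$:
\begin{align*}
\frac{\d\w}{\d t} + \mu \A\w + \B(\u_1) - \B(\u_2) + \beta\bigl(\mathcal{C}(\u_1) - \mathcal{C}(\u_2)\bigr) = 0,
\end{align*}
take the $\H$-inner product with $\w$, and use the monotonicity \eqref{MO_c} to throw away the $\beta$-term since $\langle \mathcal{C}(\u_1)-\mathcal{C}(\u_2), \w\rangle \geq 0$. The bilinear piece simplifies using $\B(\u_1)-\B(\u_2) = \B(\w,\u_1) + \B(\u_2,\w)$ and $b(\u_2,\w,\w)=0$ from \eqref{b0}, leaving only $b(\w,\u_1,\w)$. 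Estimating this via \eqref{b1} gives $|b(\w,\u_1,\w)| \leq c_1\|\w\|_{\H}\|\w\|_{\V}\|\u_1\|_{\V}$, and Young's inequality absorbs $\|\w\|_\V^2$ into the dissipation, leading to
\begin{align*}
\frac{\d}{\d t}\|\w(t)\|_{\H}^2 + \mu\|\w(t)\|_{\V}^2 \leq \frac{c_1^2}{\mu}\|\u_1(t)\|_{\V}^2\|\w(t)\|_{\H}^2.
\end{align*}

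Next I would apply the Poincar\'e inequality \eqref{poin} to the dissipative term to rewrite this as
\begin{align*}
\frac{\d}{\d t}\|\w(t)\|_{\H}^2 \leq -\left(\mu\lambda_1 - \frac{c_1^2}{\mu}\|\u_1(t)\|_{\V}^2\right)\|\w(t)\|_{\H}^2.
\end{align*}
The key step, and the place where the small-forcing hypothesis \eqref{C_1} is essential, is to control the coefficient in parentheses. For this I invoke Lemma \ref{d-ab} applied to $\u_1$: given any $\varepsilon>0$, there exists $T_{B_\H,\varepsilon}$ such that $\|\u_1(t)\|_{\V}^2 \leq \mu^{-1}[1 + (\mu\lambda_1)^{-1} + (\mu\lambda_1)^{-2}]\|\f\|_\H^2 + \varepsilon$ for all $t \geq T_{B_\H,\varepsilon}$. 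Substituting this bound yields
\begin{align*}
\mu\lambda_1 - \frac{c_1^2}{\mu}\|\u_1(t)\|_{\V}^2 \geq \varrho - \frac{c_1^2\varepsilon}{\mu},
\end{align*}
with $\varrho$ as in \eqref{C_2}. Choosing $\varepsilon := \mu\varrho/(2c_1^2)$ and setting $T := T_{B_\H,\varepsilon}$ makes the coefficient at least $\varrho/2$.

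Finally I would apply Gronwall's inequality on $[T, t]$ to conclude $\|\w(t)\|_{\H}^2 \leq e^{-\varrho(t-T)/2}\|\w(T)\|_{\H}^2$ for all $t\geq T$. The main obstacle is really just step three — verifying that the $\V$-absorbing-ball estimate from Lemma \ref{d-ab} is \emph{tight enough} (namely that its asymptotic radius is less than $\mu^2\lambda_1/c_1^2$) so that subtracting it from $\mu\lambda_1$ leaves the strictly positive margin $\varrho$; this is precisely what the Grashof-number condition \eqref{C_1} guarantees, and nowhere else in the argument does the forcing intensity play a role. No regularity difficulty arises from the nonlinear damping since $\mathcal{C}$ is disposed of by monotonicity.
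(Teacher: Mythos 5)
Your proposal is correct and follows essentially the same route as the paper: take the difference equation, discard the damping term by the monotonicity \eqref{MO_c}, reduce the bilinear terms to $b(\w,\u_1,\w)$, estimate via \eqref{b1} and Young, and invoke Lemma \ref{d-ab} with $\varepsilon=\mu\varrho/(2c_1^2)$ to get the coefficient $\geq\varrho/2$ before applying Gronwall. Your observation that the exponent should be $e^{-\frac{\varrho}{2}(t-T)}$ is also right --- the paper's statement \eqref{Unique} has a sign typo, as confirmed by how the estimate is used in the proof of Theorem \ref{D-SA}.
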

	\begin{proof}
		Since $\u_1(\cdot)$ and $\u_2(\cdot)$ are the solutions of \eqref{D-CBF}, therefore $\y(\cdot):=\u_1(\cdot)-\u_2(\cdot)$ satisfies the following:
		\begin{align}\label{uni1}
		\frac{\d \y(t)}{\d t} + \mu\A\y (t)&= 
		-\B(\y(t),\u_1(t))+\B(\u_2(t),\y(t)) -\beta\mathcal{C}(\u_1(t))+\beta\mathcal{C}(\u_2(t)),
		\end{align}
		for a.e. $t\in[0,\tau]$. 	Taking the inner product of \eqref{uni1} with $\y(\cdot)$, we get
		\begin{align}\label{uni2}
		&	\frac{1}{2}\frac{\d}{\d t} \|\y(t)\|^2_{\H} + \mu\|\y(t)\|^2_{\V}\nonumber\\&= -b(\y(t),\u_1(t),\y(t))-b(\u_2(t),\y(t),\y(t))-\beta\left\langle\mathcal{C}(\u_1(t))-\mathcal{C}(\u_2(t)),\u_1(t)-\u_2(t)\right\rangle \nonumber\\&\leq -b(\y(t),\u_1(t),\y(t))\leq c_1\|\y(t)\|_{\H}\|\y(t)\|_{\V}\|\u_1(t)\|_{\V}\nonumber\\&\leq \frac{\mu}{2}\|\y(t)\|_{\V}^2+\frac{c_1^2}{2\mu} \|\y(t)\|^2_{\H}\|\u_1(t)\|^2_{\V} , \ \mbox{ for a.e.} \ t\in[0,\tau],
		\end{align}
		where we used \eqref{b0}, \eqref{MO_c} and \eqref{b1}. 
		Using \eqref{poin}, the inequality \eqref{uni2} can be written as
		\begin{align*}
		\frac{\d}{\d t}\|\y(t)\|^2_{\H} + \left(\mu\lambda_1-\frac{c_1^2}{\mu}\|\u_1(t)\|^2_{\V}\right)\|\y(t)\|^2_{\H}\leq 0.
		\end{align*}
		Invoking Lemma \ref{d-ab} for $\varepsilon=\frac{\varrho\mu}{c_1^2}$ (where $\varrho=\mu\lambda_1-\frac{c_1^2}{\mu^2}\left[1+\frac{1}{\mu\lambda_1}+\frac{1}{\mu^2\lambda_1^2}\right]\|\f\|^2_{\H}>0$), one can obtain the existence of  a time $T=T_{B_{\H},\frac{\varrho\mu}{2c_1^2}}>0$ such that
		\begin{align}\label{uni5}
		\mu\lambda_1-\frac{c_1^2}{\mu}\|\u_1(t)\|^2_{\V}\geq\mu\lambda_1-\frac{c_1^2}{\mu}\left(\frac{1}{\mu}\left[1+\frac{1}{\mu\lambda_1}+\frac{1}{\mu^2\lambda_1^2}\right]\|\f\|^2_{\H}+\frac{\varrho\mu}{2c_1^2}\right)=\frac{\varrho}{2},\ \mbox{ for all }\ t\geq T.
		\end{align}
		Hence, we obtain 
		\begin{align*}
		\frac{\d}{\d t}\|\y(t)\|^2_{\H} +\frac{\varrho}{2}\|\y(t)\|^2_{\H}\leq 0,  \ \mbox{ for a.e. } \ t\geq T,
		\end{align*}
		and by applying Gronwall's inequality, we immediate obtain \eqref{Unique}.
	\end{proof}
	\begin{theorem}\label{D-SA}
		Let the condition \eqref{C_1} be satisfied. Then the global attractor $\mathcal{A}$ of the 2D CBF equations \eqref{D-CBF} is a singleton $\mathcal{A}=\{\textbf{a}_{*}\}$ with $\textbf{a}_{*}\in\V$.
	\end{theorem}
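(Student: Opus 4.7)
The plan is to first establish existence of the global attractor $\mathcal{A}$ using the absorbing bounds already at hand, and then to exploit the exponential contraction estimate of Lemma~\ref{Uniqueness} through a pullback along the invariant set to conclude that $\mathcal{A}$ cannot contain more than one point.

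For existence, I would invoke Lemma~\ref{d-ab}: for any bounded set $B_{\H}\subset\H$, every orbit starting in $B_{\H}$ eventually enters a ball in $\V$ whose radius depends only on $\mu,\lambda_1,\|\f\|_{\H}$. This provides an absorbing set that is bounded in $\V$, and the compact embedding $\V\hookrightarrow\H$ on the 2D torus upgrades it to a compact absorbing set in $\H$. Combined with the continuity of the solution semigroup $S(t):\H\to\H$ guaranteed by Theorem~\ref{D-Sol}, the classical theory of dissipative dynamical systems (e.g.\ \cite{R.Temam,JCR}) yields a compact, invariant global attractor $\mathcal{A}\subset\H$. Since $\mathcal{A}$ is contained in the $\V$-absorbing set, one has $\mathcal{A}\subset\V$, which already delivers the regularity claim $\textbf{a}_{*}\in\V$ once the singleton property is established.

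For the singleton property, I would fix any two points $a_1,a_2\in\mathcal{A}$ and set $R:=\sup_{a\in\mathcal{A}}\|a\|_{\H}$, which is finite by compactness. Using the invariance $S(s)\mathcal{A}=\mathcal{A}$ for every $s>0$, I choose preimages $a_1^{s},a_2^{s}\in\mathcal{A}$ with $S(s)a_i^{s}=a_i$. The two solutions $\u_i(t):=S(t)a_i^{s}$ then have initial data in the bounded set $B_{\H}=\mathcal{A}$, so Lemma~\ref{Uniqueness} applies and yields, for all $s$ beyond the threshold time $T$ associated with $\mathcal{A}$,
\begin{align*}
\|a_1-a_2\|_{\H}^{2}=\|\u_1(s)-\u_2(s)\|_{\H}^{2}\leq e^{-\frac{\varrho}{2}(s-T)}\|\u_1(T)-\u_2(T)\|_{\H}^{2}\leq 4R^{2}\,e^{-\frac{\varrho}{2}(s-T)}.
\end{align*}
Sending $s\to\infty$ then forces $a_1=a_2$, so $\mathcal{A}=\{\textbf{a}_{*}\}$.

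The only non-routine step, and the one I would treat most carefully, is that Lemma~\ref{Uniqueness} supplies exponential contraction only after a waiting time $T=T(B_{\H})$ that a priori depends on the set of initial data. The pullback argument is tailored exactly for this: the invariance of $\mathcal{A}$ furnishes, for every $s>0$, initial data lying in one and the same bounded set (namely $\mathcal{A}$), so $T$ can be fixed once and for all and the same contraction rate governs every $s$. The small-forcing condition \eqref{C_1}, through the positivity of $\varrho$ in \eqref{C_2}, is precisely what makes this rate strictly negative and kills the uniform bound $4R^{2}$ in the limit $s\to\infty$.
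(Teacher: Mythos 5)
Your proof is correct and follows essentially the same route as the paper: the paper also pulls back along the invariant attractor (phrased via complete trajectories $\upeta_i(-t)$ rather than preimages under $S(s)$, which is the same device), applies the contraction of Lemma~\ref{Uniqueness} with the waiting time fixed by taking $B_{\H}=\mathcal{A}$, bounds the intermediate term by the diameter of $\mathcal{A}$, and lets the pullback time tend to infinity. Your added justification of the existence of $\mathcal{A}$ and of $\textbf{a}_{*}\in\V$ via the $\V$-absorbing set of Lemma~\ref{d-ab} is sound and in fact supplies a detail (the regularity claim) that the paper's proof leaves implicit.
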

	\begin{proof}
		Let us take two arbitrary points $\textbf{a}_1, \textbf{a}_2\in \mathcal{A}$. Since the global attractor $\mathcal{A}$ is a bounded set in $\H$ and consists of bounded complete trajectories in $\H$, there exists two complete trajectories $\upeta_1(t)$ and $\upeta_2(t)$ with initial conditions $\upeta_1(0)=\textbf{a}_1$ and $\upeta_2(0)=\textbf{a}_2$, respectively. Hence, by Lemma \ref{Uniqueness}, we see that there exists a $T=T_{\mathcal{A}}>0$ such that
		\begin{align*}
		\|\textbf{a}_1-\textbf{a}_2\|^2_{\H} &= \|\upeta_1(0)-\upeta_2(0)\|^2_{\H}\\
		&=\|\u(t,\upeta_1(-t))-\u(t,\upeta_2(-t))\|^2_{\H}\\
		&\leq e^{-\frac{\varrho}{2}(t-T)}\|\u(T,\upeta_1(-t))-\u(T,\upeta_2(-t))\|^2_{\H}, \ \text{ for all }\  t\geq T,\\
		&\leq e^{-\frac{\varrho}{2}(t-T)} \left[\text{Diameter} (\mathcal{A})\right]^2, \text{ for all } t\geq T,\\
		& \to 0 \ \text{ as } \ t\to \infty,
		\end{align*}
		which completes the proof.
	\end{proof}
	\begin{remark}
		If we consider initial data in $\V$ and assume that the Grashof number satisfies the following:
		\begin{align}\label{C_3}
		G=\frac{\|\f\|_{\H}}{\mu^2\lambda}<\frac{1}{c_1},
		\end{align}
		or, equivalently, 
		\begin{align}\label{C_4}
		\varrho_{*}:=\mu\lambda_1-\frac{c_1^2 \|\f\|^2_{\H}}{\mu^3\lambda_1}>0,
		\end{align}
		where $c_1$ is the constant appearing in the estimate \eqref{b1}. Then, Lemma \ref{d-ab}, Lemma \ref{Uniqueness} and Theorem \ref{D-SA} can be stated as follows.
		\begin{lemma}\label{d-ab_1}
			Let $n=2, r\geq 1$ and $\f\in\H$. For any bounded set $B_{\V}\subset\V$ and $\varepsilon>0,$ there exists a time $T_{B_{\V},\varepsilon}>0$ such that any solution $\u(\cdot)$ with initial data in $B_{\V}$ satisfies
			\begin{align*}
			\|\u(t)\|^2_{\V}\leq \frac{\|\f\|^2_{\H}}{\mu^2\lambda_1}+\varepsilon, \text{ for all } t\geq T_{B_{\V},\varepsilon}.
			\end{align*} 
		\end{lemma}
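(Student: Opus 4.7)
The plan is to run the same $\V$-level energy argument that appeared inside the proof of Lemma \ref{d-ab}, but to cut it short: since the initial data now lie in $\V$ (rather than only $\H$), the double-integration trick used in Lemma \ref{d-ab} to transfer $\mathrm{L}^2_tL^2_x$ information to a pointwise bound in $\V$ is not needed, and one can directly derive an exponentially decaying bound whose equilibrium value is the sharp constant $\|\f\|_{\H}^2/(\mu^2\lambda_1)$.

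Concretely, I would first take the inner product of the first equation in \eqref{D-CBF} with $\A\u(t)$. As recalled at \eqref{d-ab6}, in the 2D periodic setting the convective contribution vanishes, $(\B(\u),\A\u)=b(\u,\u,\A\u)=0$, and by \eqref{d-ab7} the damping contribution $(\mathcal{C}(\u),\A\u)$ is non-negative and may be discarded. Handling the forcing by Young's inequality $(\f,\A\u)\leq \frac{\mu}{2}\|\A\u\|_{\H}^2+\frac{1}{2\mu}\|\f\|_{\H}^2$ then yields
\begin{align*}
\frac{\d}{\d t}\|\u(t)\|_{\V}^2+\mu\|\A\u(t)\|_{\H}^2\leq \frac{1}{\mu}\|\f\|_{\H}^2.
\end{align*}
Inserting the Poincaré-type bound \eqref{poin_1}, namely $\|\A\u\|_{\H}^2\geq \lambda_1\|\u\|_{\V}^2$, produces the closed differential inequality
\begin{align*}
\frac{\d}{\d t}\|\u(t)\|_{\V}^2+\mu\lambda_1\|\u(t)\|_{\V}^2\leq \frac{1}{\mu}\|\f\|_{\H}^2,
\end{align*}
to which Gronwall's inequality applies directly, giving
\begin{align*}
\|\u(t)\|_{\V}^2\leq \|\x\|_{\V}^2 e^{-\mu\lambda_1 t}+\frac{\|\f\|_{\H}^2}{\mu^2\lambda_1},\qquad t\geq 0.
\end{align*}

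To conclude, given a bounded set $B_{\V}\subset \V$ and $\varepsilon>0$, I would take
\begin{align*}
T_{B_{\V},\varepsilon}:=\max\left\{0,\;\frac{1}{\mu\lambda_1}\ln\left(\frac{\sup_{\x\in B_{\V}}\|\x\|_{\V}^2}{\varepsilon}\right)\right\},
\end{align*}
so that the transient term is at most $\varepsilon$ for every $t\geq T_{B_{\V},\varepsilon}$ and every $\x\in B_{\V}$. There is no genuine obstacle here: the only point requiring minor care is that the formal test by $\A\u$ must be justified on a Faedo--Galerkin approximation (where it is legitimate) and then passed to the limit, exactly as in the proofs of Theorem \ref{D-Sol} and Lemma \ref{d-ab}; since we start in $\V$, the $\V$-valued continuity of the solution map at $t=0$ makes this step routine.
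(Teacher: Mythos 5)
Your proposal is correct and follows essentially the same route as the paper: the paper's proof simply applies Gronwall's inequality to the differential inequality \eqref{d-ab9} (after discarding the nonnegative damping terms and using $\|\A\u\|_{\H}^2\geq\lambda_1\|\u\|_{\V}^2$ from \eqref{poin_1}), which is exactly the inequality you re-derive, and your explicit choice of $T_{B_{\V},\varepsilon}$ and the remark on Galerkin justification are consistent with, though more detailed than, what the paper records.
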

		\begin{proof}
			By applying Gronwall's inequality in \eqref{d-ab9}, we get
			\begin{align*}
			\|\u(t)\|^2_{\V}\leq e^{-\mu\lambda_1 t}\|\u(0)\|^2_{\V} +\int_{0}^{t} e^{\mu\lambda_1(\xi-t)} \frac{\|\f\|^2_{\H}}{\mu}\d \xi\leq e^{-\mu\lambda_1 t}\|\u(0)\|^2_{\V} + \frac{\|\f\|^2_{\H}}{\mu^2\lambda_1},
			\end{align*}
			from which the lemma follows.
		\end{proof}
		\begin{lemma}\label{Uniqueness_1}
			For $n=2$, let the condition \eqref{C_3} be satisfied. For any bounded set $B_{\V}\subset \V$, there exists a $T>0$ such that for any two solutions $\u_1(\cdot)$ and $\u_2(\cdot)$ of \eqref{D-CBF} with initial data in $B_{\V}$ satisfies
			\begin{align*}
			\|\u_1(t)-\u_2(t)\|^2_{\H} \leq e^{\frac{\varrho_{*}}{2}(t-T)} \|\u_1(T)-\u_2(T)\|^2_{\H}, \text{ for all } t\geq T,
			\end{align*}
			where $\varrho_{*}$ is the constant given by \eqref{C_4}. 
		\end{lemma}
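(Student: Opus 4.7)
The plan is to mirror the argument of Lemma \ref{Uniqueness} line by line, but replace the $\H$--to--$\V$ absorbing ball estimate (Lemma \ref{d-ab}) with the sharper $\V$--absorbing ball estimate (Lemma \ref{d-ab_1}) that is available precisely because the initial data now lies in $\V$. The sharper absorbing bound is what allows the weaker smallness condition \eqref{C_3}/\eqref{C_4} to suffice.

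First, set $\y(t):=\u_1(t)-\u_2(t)$, which solves the difference equation \eqref{uni1}. Taking the $\H$--inner product with $\y(t)$ and using, exactly as in \eqref{uni2}, the cancellation $b(\u_2,\y,\y)=0$, the monotonicity of $\mathcal{C}$ from \eqref{MO_c}, and the 2D estimate \eqref{b1} on the trilinear form, I arrive at
\begin{align*}
\frac{1}{2}\frac{\d}{\d t}\|\y(t)\|^2_{\H}+\mu\|\y(t)\|^2_{\V}\leq c_1\|\y(t)\|_{\H}\|\y(t)\|_{\V}\|\u_1(t)\|_{\V}.
\end{align*}
A standard application of Young's inequality to the right-hand side absorbs the $\mu\|\y\|_{\V}^2$ term, and the Poincar\'e inequality \eqref{poin} then yields
\begin{align*}
\frac{\d}{\d t}\|\y(t)\|^2_{\H}+\left(\mu\lambda_1-\frac{c_1^2}{\mu}\|\u_1(t)\|^2_{\V}\right)\|\y(t)\|^2_{\H}\leq 0.
\end{align*}

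Next I invoke Lemma \ref{d-ab_1} with the choice $\varepsilon=\frac{\mu\varrho_{*}}{2c_1^2}$ (which is legitimate because $\varrho_{*}>0$ by \eqref{C_4}), to obtain a time $T=T_{B_{\V},\varepsilon}>0$ such that
\begin{align*}
\|\u_1(t)\|^2_{\V}\leq \frac{\|\f\|^2_{\H}}{\mu^2\lambda_1}+\frac{\mu\varrho_{*}}{2c_1^2}, \ \text{ for all }\ t\geq T.
\end{align*}
Plugging this in and using the definition of $\varrho_{*}$ gives
\begin{align*}
\mu\lambda_1-\frac{c_1^2}{\mu}\|\u_1(t)\|^2_{\V}\geq \mu\lambda_1-\frac{c_1^2\|\f\|^2_{\H}}{\mu^3\lambda_1}-\frac{\varrho_{*}}{2}=\frac{\varrho_{*}}{2},\ \text{ for all }\ t\geq T,
\end{align*}
so that $\frac{\d}{\d t}\|\y(t)\|^2_{\H}+\frac{\varrho_{*}}{2}\|\y(t)\|^2_{\H}\leq 0$ on $[T,\infty)$. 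A direct application of Gronwall's inequality on $[T,t]$ then delivers the desired bound.

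There is no serious obstacle here, since the structure is identical to Lemma \ref{Uniqueness}; the only delicate point is to verify that Lemma \ref{d-ab_1} gives a bound that is small enough for the coefficient $\mu\lambda_1-\frac{c_1^2}{\mu}\|\u_1\|_{\V}^2$ to remain positive under the weaker assumption \eqref{C_3}. This is precisely why the factor $\left[1+\frac{1}{\mu\lambda_1}+\frac{1}{\mu^2\lambda_1^2}\right]$ appearing in Lemma \ref{d-ab} can be dropped: starting from $\V$, the asymptotic $\V$--bound is simply $\frac{\|\f\|^2_{\H}}{\mu^2\lambda_1}$, and this sharper bound is exactly what makes \eqref{C_4} match $\varrho_{*}/2$ in the computation above.
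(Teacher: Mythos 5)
Your proof is correct and is exactly the argument the paper intends: Lemma \ref{Uniqueness_1} is stated there without proof as the evident modification of Lemma \ref{Uniqueness} in which the absorbing estimate of Lemma \ref{d-ab} is replaced by the sharper $\V$-bound of Lemma \ref{d-ab_1}, and your arithmetic $\mu\lambda_1-\frac{c_1^2}{\mu}\bigl(\frac{\|\f\|_{\H}^2}{\mu^2\lambda_1}+\frac{\mu\varrho_{*}}{2c_1^2}\bigr)=\varrho_{*}-\frac{\varrho_{*}}{2}=\frac{\varrho_{*}}{2}$ checks out. The only remark worth making is that Gronwall's inequality produces the decaying factor $e^{-\frac{\varrho_{*}}{2}(t-T)}$; the positive exponent in the printed statement (as in \eqref{Unique}) is a sign typo, and the decay is of course what is needed for the singleton attractor.
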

		\begin{theorem}\label{D-SA_1}
			Let the condition \eqref{C_3} be satisfied. Then the global attractor $\mathcal{A}$ of 2D CBF equations \eqref{D-CBF} is a singleton $\mathcal{A}=\{\textbf{a}_{*}\}$ with $\textbf{a}_{*}\in\D(\A)$.
		\end{theorem}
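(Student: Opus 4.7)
The plan is to follow the blueprint of Theorem \ref{D-SA} verbatim, simply replacing Lemma \ref{Uniqueness} by its $\V$-counterpart Lemma \ref{Uniqueness_1} (and Lemma \ref{d-ab} by Lemma \ref{d-ab_1}). Pick arbitrary $\textbf{a}_1, \textbf{a}_2 \in \mathcal{A}$. By the invariance and compactness of $\mathcal{A}$, there exist bounded complete trajectories $\upeta_1(\cdot), \upeta_2(\cdot) \subset \mathcal{A}$ with $\upeta_i(0) = \textbf{a}_i$, and each $\upeta_i(-t)$ lies in $\mathcal{A}$, which under condition \eqref{C_3} is a bounded subset of $\V$. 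Applying Lemma \ref{Uniqueness_1} with $B_{\V} = \mathcal{A}$ yields $T>0$ such that for every $t \geq T$,
\begin{align*}
\|\textbf{a}_1 - \textbf{a}_2\|_{\H}^2 &= \|\u(t, \upeta_1(-t)) - \u(t, \upeta_2(-t))\|_{\H}^2 \\
&\leq e^{-\frac{\varrho_{*}}{2}(t - T)} \|\u(T, \upeta_1(-t)) - \u(T, \upeta_2(-t))\|_{\H}^2 \\
&\leq e^{-\frac{\varrho_{*}}{2}(t - T)} [\text{Diameter}(\mathcal{A})]^2 \longrightarrow 0 \quad \text{as } t \to \infty,
\end{align*}
which forces $\textbf{a}_1 = \textbf{a}_2$. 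Hence $\mathcal{A} = \{\textbf{a}_{*}\}$.

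Since $\mathcal{A}$ is invariant under the semigroup, the singleton $\textbf{a}_{*}$ is necessarily a stationary solution of \eqref{D-CBF}, that is, $\mu \A \textbf{a}_{*} + \B(\textbf{a}_{*}) + \beta \mathcal{C}(\textbf{a}_{*}) = \f$, and Lemma \ref{d-ab_1} a priori gives only $\textbf{a}_{*} \in \V$. To upgrade this to $\textbf{a}_{*} \in \D(\A)$, I would rewrite the stationary equation as $\mu \A \textbf{a}_{*} = \f - \B(\textbf{a}_{*}) - \beta \mathcal{C}(\textbf{a}_{*})$ and check that the right-hand side lies in $\H$. In two dimensions the Sobolev embedding $\V \hookrightarrow \wi\L^q$ for every $q<\infty$ gives $\mathcal{C}(\textbf{a}_{*}) = \mathcal{P}(|\textbf{a}_{*}|^{r-1}\textbf{a}_{*}) \in \H$ for all $r\geq 1$, while the convective term $\B(\textbf{a}_{*}) = \mathcal{P}((\textbf{a}_{*} \cdot \nabla)\textbf{a}_{*})$ can be placed in $\H$ by a Brezis--Gallou\"et type estimate, or by a Stokes bootstrap starting from the stationary equation. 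Elliptic regularity for $\A$ then yields $\textbf{a}_{*} \in \D(\A)$ together with the a priori bound $\|\A\textbf{a}_{*}\|_{\H} \leq \mu^{-1}\|\f - \B(\textbf{a}_{*}) - \beta\mathcal{C}(\textbf{a}_{*})\|_{\H}$.

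The singleton step is essentially routine once Lemma \ref{Uniqueness_1} is at hand, so the real work sits in the $\D(\A)$-regularity upgrade. The main obstacle is controlling the nonlinear damping $\mathcal{C}(\u)$ in the bootstrap: for large $r$, the term $|\textbf{a}_{*}|^{r-1}\textbf{a}_{*}$ demands high $\L^p$-integrability of $\textbf{a}_{*}$, which is precisely where the two-dimensional embedding $\V \hookrightarrow \wi\L^q$ for every $q<\infty$ becomes indispensable and keeps the argument uniform in $r \geq 1$. An alternative route, bypassing the stationary equation altogether, is to extract a $\D(\A)$-absorbing set directly at the dynamical level by testing \eqref{D-CBF} against $\A \u$, exploiting the identity \eqref{d-ab6} and the nonnegativity in \eqref{d-ab7}, and then applying the double-integration trick of Lemma \ref{d-ab} to \eqref{d-ab9}; the attractor would then inherit $\D(\A)$-boundedness from the absorbing ball, and in particular so would the singleton $\textbf{a}_{*}$.
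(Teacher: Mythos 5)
Your singleton argument is exactly the one the paper intends: the paper offers no separate proof of Theorem \ref{D-SA_1}, merely asserting in the surrounding remark that Lemma \ref{d-ab_1}, Lemma \ref{Uniqueness_1} and the proof of Theorem \ref{D-SA} can be repeated verbatim, and your contraction-of-complete-trajectories computation (with the decaying exponential $e^{-\frac{\varrho_{*}}{2}(t-T)}$, correcting the evident sign typo in the statement of Lemma \ref{Uniqueness_1}) is precisely that repetition. The one point worth flagging is the claim $\textbf{a}_{*}\in\D(\A)$: the paper nowhere justifies it, since Lemma \ref{d-ab_1} only produces a $\V$-absorbing ball, so some additional argument is genuinely required. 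Your route — observing that invariance of the singleton forces $\textbf{a}_{*}$ to be a stationary solution, placing $\mathcal{C}(\textbf{a}_{*})\in\H$ via the 2D embedding $\V\hookrightarrow\widetilde{\L}^{q}$ for all $q<\infty$, bootstrapping $\B(\textbf{a}_{*})$ into $\H$, and invoking elliptic regularity for the Stokes operator — is a correct and arguably the cleanest way to close this gap, and your alternative of extracting a $\D(\A)$-absorbing set at the dynamical level would also work (at the cost of an extra estimate against $\A^{2}\u$). In short, the proposal is correct, coincides with the paper where the paper gives an argument, and supplies the missing regularity step where it does not.
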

	\end{remark}
	\begin{corollary}
		The global attractor $\mathcal{A}$ of 2D CBF equations \eqref{D-CBF} with $\f=\bf{0}$ is a singleton $\mathcal{A}=\{\bf{0}\}$.
	\end{corollary}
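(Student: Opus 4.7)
With $\f=\mathbf{0}$ the Grashof number is $G=0$, so condition \eqref{C_1} (equivalently \eqref{C_2}, with $\varrho=\mu\lambda_1>0$) is trivially satisfied. Hence Theorem \ref{D-SA} applies and the global attractor is a singleton $\mathcal{A}=\{\textbf{a}_*\}$ for some $\textbf{a}_*\in\V$. It only remains to identify $\textbf{a}_*$ with $\mathbf{0}$.

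The plan is to note that $\mathbf{0}$ is a fixed point of the semigroup generated by \eqref{D-CBF} when $\f=\mathbf{0}$: if $\u(0)=\mathbf{0}$, then by uniqueness $\u(t)=\mathbf{0}$ for all $t\geq 0$. Since fixed points are (complete) bounded trajectories, they must lie in the global attractor, so $\mathbf{0}\in\mathcal{A}$. Combined with $\mathcal{A}=\{\textbf{a}_*\}$, this forces $\textbf{a}_*=\mathbf{0}$.

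Alternatively, and more directly, the energy estimate \eqref{d-ab2} with $\f=\mathbf{0}$ reduces to
\begin{align*}
\|\u(t)\|_{\H}^2 \leq \|\x\|_{\H}^2 e^{-\mu\lambda_1 t},
\end{align*}
so every trajectory converges to $\mathbf{0}$ in $\H$ as $t\to\infty$. Consequently $\{\mathbf{0}\}$ attracts all bounded sets of $\H$ and, being a compact invariant set, coincides with the global attractor $\mathcal{A}$. No obstacle arises here; the substantive work is already contained in Theorem \ref{D-SA}, and the corollary is essentially a remark identifying the location of the singleton.
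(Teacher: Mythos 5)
Your proof is correct and matches the paper's (implicit) reasoning: the corollary is stated without proof precisely because it follows at once from Theorem \ref{D-SA} (condition \eqref{C_1} holds trivially with $G=0$ and $\varrho=\mu\lambda_1>0$) together with the observation that $\mathbf{0}$ is a stationary solution of \eqref{D-CBF} when $\f=\mathbf{0}$ and hence lies in $\mathcal{A}$. Your alternative route via the decay estimate \eqref{d-ab2}, which shows $\{\mathbf{0}\}$ itself attracts all bounded sets, is equally valid and in fact self-contained.
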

	\subsection{\textbf{Deterministic singleton attractor for 3D CBF equations}}
	In this subsection, we prove that under the condition \eqref{3D-C_1} (for $n=3$ and $r>3$) and \eqref{3D-C_3} (for $n=r=3$ with $2\beta\mu\geq 1$) the global attractor $\mathcal{A}$ of the 3D CBF equations \eqref{D-CBF} is a singleton. 
	\begin{lemma}\label{3d-ab}
		Let $n=3 \text{ and } r\geq 3$ ($r>3$ for any $\beta,\mu>0$ and $r=3$ for $2\beta\mu\geq1$) and $\f\in\H$. For any bounded set $B_{\H}\subset\H$ and $\varepsilon>0,$ there exists a time $T_{B_{\H},\varepsilon}>0$ such that any solution $\u(\cdot)$ with initial data in $B_{\H}$ satisfies
		\begin{itemize}
			\item [(i)] For $r>3$,
			\begin{align}\label{3D_ab1}
			\|\u(t)\|_{\V}^4&\leq\frac{1}{\mu^2}\left[2+\frac{2\eta_3+1}{\mu\lambda_1}+ \frac{2\eta_3+1}{\mu^2\lambda_1^2}\right]^2\|\f\|^4_{\H}+\varepsilon, \ \text{ for all }\ t\geq T_{B_{\H},\varepsilon}.
			\end{align} 
			\item [(ii)] For $r=3$,
			\begin{align}\label{3D_ab2}
			\|\u(t)\|_{\V}^4&\leq\frac{1}{\mu^2}\left[1+\frac{1}{\mu\lambda_1}+\frac{1}{\mu^2\lambda_1^2}\right]^2\|\f\|^4_{\H}+\varepsilon, \ \text{ for all } \ t\geq T_{B_{\H},\varepsilon}.
			\end{align} 
		\end{itemize}
	\end{lemma}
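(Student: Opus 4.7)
The strategy is to parallel Lemma~\ref{d-ab}, modified to handle the genuinely non-vanishing term $(\B(\u),\A\u)$ in three dimensions. The goal is to establish the uniform $\V$-bound
\[
\|\u(t+1)\|_{\V}^{2}\leq\tfrac{\|\f\|_{\H}^{2}}{\mu}\Bigl[2+\tfrac{2\eta_{3}+1}{\mu\lambda_{1}}+\tfrac{2\eta_{3}+1}{\mu^{2}\lambda_{1}^{2}}\Bigr]+o(1)\quad(\text{resp.\ with }\eta_{3}=0\text{ for }r=3),
\]
and then square it to recover \eqref{3D_ab1} and \eqref{3D_ab2}.

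\textbf{Step 1: $\H$ and integrated $\V$ estimates.} Taking the inner product of \eqref{D-CBF} with $\u$, using \eqref{b0} and Young's inequality, gives exactly the same inequalities \eqref{d-ab1}--\eqref{d-ab4} as in the 2D case (these steps make no use of the spatial dimension). In particular, for any initial data in $B_{\H}$ there exists a time after which
\[
\|\u(t)\|_{\H}^{2}\leq\tfrac{\|\f\|_{\H}^{2}}{\mu^{2}\lambda_{1}^{2}}+\epsilon_{1},\qquad \int_{t}^{t+1}\|\u(s)\|_{\V}^{2}\,\d s\leq \tfrac{\|\f\|_{\H}^{2}}{\mu^{3}\lambda_{1}^{2}}+\tfrac{\|\f\|_{\H}^{2}}{\mu^{2}\lambda_{1}}+\epsilon_{2}.
\]

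\textbf{Step 2: Differential inequality for $\|\u\|_{\V}^{2}$.} Take the inner product of \eqref{D-CBF} with $\A\u$ and use \eqref{3} to expand $\beta(\mathcal{C}(\u),\A\u)$. Unlike the 2D case, $b(\u,\u,\A\u)$ is not zero; estimate it by Cauchy--Schwarz as $|(\B(\u),\A\u)|\leq \||\u||\nabla\u|\|_{\H}\|\A\u\|_{\H}$. Using Young's inequality for $2(\f,\A\u)$ and $2(\B(\u),\A\u)$ (distributing $\mu\|\A\u\|_{\H}^{2}$ equally) produces
\[
\tfrac{\d}{\d t}\|\u\|_{\V}^{2}+\mu\|\A\u\|_{\H}^{2}+2\beta\||\u|^{\frac{r-1}{2}}|\nabla\u|\|_{\H}^{2}\leq \tfrac{2}{\mu}\|\f\|_{\H}^{2}+\tfrac{2}{\mu}\||\u||\nabla\u|\|_{\H}^{2}.
\]

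\textbf{Step 3: Absorbing the cubic term.} For $r=3$ with $2\beta\mu\geq 1$, one has $\||\u||\nabla\u|\|_{\H}^{2}=\||\u|^{\frac{r-1}{2}}|\nabla\u|\|_{\H}^{2}$, and the assumption makes the right-most term absorbed by the $2\beta$ term on the left, giving
\[
\tfrac{\d}{\d t}\|\u\|_{\V}^{2}+\mu\|\A\u\|_{\H}^{2}\leq \tfrac{2}{\mu}\|\f\|_{\H}^{2}.
\]
For $r>3$, apply Young's inequality in the parametric form $|\u|^{2}\leq c_{1}(\gamma)|\u|^{r-1}+c_{2}(\gamma)$ with conjugate exponents $\tfrac{r-1}{2}$ and $\tfrac{r-1}{r-3}$, and choose $\gamma$ so that $c_{1}(\gamma)$ balances the $2\beta\||\u|^{\frac{r-1}{2}}|\nabla\u|\|_{\H}^{2}$ term; the leftover is exactly $2\eta_{3}\|\u\|_{\V}^{2}$ with $\eta_{3}$ as in \eqref{215}, yielding
\[
\tfrac{\d}{\d t}\|\u\|_{\V}^{2}+\mu\|\A\u\|_{\H}^{2}\leq \tfrac{2}{\mu}\|\f\|_{\H}^{2}+2\eta_{3}\|\u\|_{\V}^{2}.
\]

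\textbf{Step 4: Double integration trick and squaring.} Drop $\mu\|\A\u\|_{\H}^{2}\geq 0$, integrate the above inequality from $s\in[t,t+1]$ to $t+1$, then integrate the resulting bound over $s\in[t,t+1]$. This yields (taking $\eta_{3}=0$ for $r=3$)
\[
\|\u(t+1)\|_{\V}^{2}\leq (1+2\eta_{3})\!\!\int_{t}^{t+1}\!\!\|\u(s)\|_{\V}^{2}\,\d s+\tfrac{2}{\mu}\|\f\|_{\H}^{2}.
\]
Substituting the integrated $\V$-bound from Step~1 and collecting terms gives exactly $\tfrac{\|\f\|_{\H}^{2}}{\mu}\bigl[2+\tfrac{2\eta_{3}+1}{\mu\lambda_{1}}+\tfrac{2\eta_{3}+1}{\mu^{2}\lambda_{1}^{2}}\bigr]+\epsilon_{3}$; squaring produces \eqref{3D_ab1} (resp.\ \eqref{3D_ab2}). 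The absorption time $T_{B_{\H},\varepsilon}$ is determined by the time required for the exponentially decaying contributions of $\|\x\|_{\H}$ to drop below $\varepsilon$.

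The main obstacle is Step~3: the parametric Young-inequality split for $r>3$, combined with the identity \eqref{3} and the absorption term, is what isolates the constant $\eta_{3}$; keeping track of the proper powers of $\gamma$ (using conjugate exponents $\tfrac{r-1}{2},\tfrac{r-1}{r-3}$) is essential, and this is precisely the step where the condition $2\beta\mu\geq 1$ becomes the natural critical threshold in the borderline case $r=3$.
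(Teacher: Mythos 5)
Your treatment of the case $r>3$ is essentially the paper's own argument: the same $\H$-estimate and time-averaged $\V$-estimate, the same inner product with $\A\u$ combined with the identity \eqref{3}, the same Young-type split of $\||\u||\nabla\u|\|_{\H}^2$ with conjugate exponents $\tfrac{r-1}{2}$ and $\tfrac{r-1}{r-3}$ that isolates exactly $\eta_3$ (compare \eqref{3d-ab13}--\eqref{3d-ab14}), and the same double integration trick; the constants come out to \eqref{3D_ab1}.

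The case $r=3$, however, has a genuine quantitative gap. With the ``equal distribution'' of $\mu\|\A\u\|_{\H}^2$ fixed in your Step 2, the right-hand side carries $\tfrac{2}{\mu}\||\u||\nabla\u|\|_{\H}^2$, and absorbing this into the $2\beta\||\u||\nabla\u|\|_{\H}^2$ produced by $2\beta(\mathcal{C}(\u),\A\u)$ requires $\tfrac{2}{\mu}\le 2\beta$, i.e. $\beta\mu\ge 1$ --- strictly stronger than the stated hypothesis $2\beta\mu\ge 1$; under the stated hypothesis a positive multiple of $\||\u||\nabla\u|\|_{\H}^2$ survives uncontrolled. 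Moreover, even where your absorption is legitimate, your differential inequality ends with $\tfrac{2}{\mu}\|\f\|_{\H}^2$ on the right, so the double integration yields the leading coefficient $2$ in $\tfrac{1}{\mu}\bigl[2+\tfrac{1}{\mu\lambda_1}+\tfrac{1}{\mu^2\lambda_1^2}\bigr]\|\f\|_{\H}^2$ (you say as much when you set $\eta_3=0$ in your target), whereas \eqref{3D_ab2} claims the coefficient $1$. This is not cosmetic: the constant in \eqref{3D_ab2} is precisely what defines $\varrho_2$ in \eqref{3D-C_4} and is consumed in Lemma \ref{3DUniqueness}, so a weaker bound would force a more restrictive smallness condition than \eqref{3D-C_3}. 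The paper avoids both problems by weighting Young's inequality for the convection term with $\beta$ rather than $\mu$, namely $|(\B(\u),\A\u)|\le\tfrac{1}{4\beta}\|\A\u\|_{\H}^2+\beta\||\u||\nabla\u|\|_{\H}^2$ as in \eqref{3d-ab18}: the cubic term then cancels exactly against $\beta(\mathcal{C}(\u),\A\u)$, the full $\tfrac{\mu}{2}\|\A\u\|_{\H}^2$ remains available for the forcing term (giving $\tfrac{1}{2\mu}\|\f\|_{\H}^2$ and hence the leading $1$), and the residual coefficient $\mu-\tfrac{1}{2\beta}$ of $\|\A\u\|_{\H}^2$ is nonnegative precisely when $2\beta\mu\ge 1$. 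You should rework Steps 2--3 for $r=3$ along these lines.
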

	\begin{proof}
		Calculation similar to \eqref{d-ab4} gives 
		\begin{align}\label{3d-ab4}
		\int_{t}^{t+1}\|\u(s)\|^2_{\V} \d s\leq\frac{1}{\mu}\|\x\|^2_{\H}e^{-\mu\lambda_1 t} + \frac{1}{\mu^3\lambda_1^2}\|\f\|^2_{\H}+\frac{1}{\mu^2\lambda_1}\|\f\|^2_{\H},
		\end{align}
		for all $t\geq0$. Let us now take the inner product with $\A\u(\cdot)$ to first equation of \eqref{D-CBF} to  get 
		\begin{align}\label{3d-ab5}
		\frac{1}{2}\frac{\d}{\d t}\|\u(t)\|^2_{\V} +\mu\|\A\u(t)\|^2_{\H}+(\B(\u(t)),\A\u(t))+\beta(\mathcal{C}(\u(t)),\A\u(t))=(\f,\A\u(t)),
		\end{align}
		for a.e. $t\in[0,\tau]$. First we consider the case $r>3$ for any $\beta,\mu>0$ and then $r=3$ with $2\beta\mu\geq1$.  
		\vskip 0.2 cm
		\noindent
		\textbf{Case I:}  $r>3$.
		From \eqref{3}, we have
		\begin{align}
		(\mathcal{C}(\u),\A\u)&=\||\nabla\u||\u|^{\frac{r-1}{2}}\|^2_{\H}+4\left[\frac{r-1}{(r+1)^2}\right]\|\nabla|\u|^{\frac{r+1}{2}}\|^2_{\H}.\label{3d-ab9} 
		\end{align}
		Once again the	Cauchy-Schwarz and Young inequalities yield 
		\begin{align}
		|(\f, \A\u)|&\leq \|\f\|_{\H}\|\A\u\|_{\H}\leq\frac{\mu}{4}\|\A\u\|^2_{\H} + \frac{1}{\mu}\|\f\|^2_{\H}.\label{3d-ab10}
		\end{align}
		We estimate $|(\B(\u),\A\u)|$ using H\"older's and Young's inequalities as 
		\begin{align}\label{3d-ab12}
		|(\B(\u),\A\u)|&\leq\||\u||\nabla\u|\|_{\H}\|\A\u\|_{\H}\leq\frac{\mu}{4}\|\A\u\|_{\H}^2+\frac{1}{\mu }\||\u||\nabla\u|\|_{\H}^2. 
		\end{align}
		We  estimate the final term from \eqref{3d-ab12} using H\"older's and Young's inequalities as (similarly as in \cite{MTM1})
		\begin{align}\label{3d-ab13}
		&	\int_{\mathbb{T}^3}|\u(x)|^2|\nabla\u(x)|^2\d x\nonumber\\&=\int_{\mathbb{T}^3}|\u(x)|^2|\nabla\u(x)|^{\frac{4}{r-1}}|\nabla\u(x)|^{\frac{2(r-3)}{r-1}}\d x\nonumber\\&\leq\left(\int_{\mathbb{T}^3}|\u(x)|^{r-1}|\nabla\u(x)|^2\d x\right)^{\frac{2}{r-1}}\left(\int_{\mathbb{T}^3}|\nabla\u(x)|^2\d x\right)^{\frac{r-3}{r-1}}\nonumber\\&\leq{\frac{\beta\mu}{2} }\left(\int_{\mathbb{T}^3}|\u(x)|^{r-1}|\nabla\u(x)|^2\d x\right)+\frac{r-3}{r-1}\left[\frac{4}{\beta\mu (r-1)}\right]^{\frac{2}{r-3}}\left(\int_{\mathbb{T}^3}|\nabla\u(x)|^2\d x\right).
		\end{align}
		Making use of the estimate \eqref{3d-ab13} in \eqref{3d-ab12}, we find
		\begin{align}\label{3d-ab14}
		|(\B(\u),\A\u)|&\leq\frac{\mu}{4}\|\A\u\|_{\H}^2+\frac{\beta}{2}\||\nabla\u||\u|^{\frac{r-1}{2}}\|^2_{\H}+\eta_3\|\u\|^2_{\V},
		\end{align}
		where $\eta_3$ is defined by \eqref{215}.
		Combining \eqref{3d-ab9}-\eqref{3d-ab10} and \eqref{3d-ab14}, and inserting in \eqref{3d-ab5}, we obtain
		\begin{align}\label{3d-ab15}
		&\frac{\d}{\d t}\|\u(t)\|^2_{\V}+\mu\|\A\u(t)\|_{\H}^2+\beta\||\nabla\u(t)||\u(t)|^{\frac{r-1}{2}}\|^2_{\H}+8\beta\left[\frac{r-1}{(r+1)^2}\right]\|\nabla|\u(t)|^{\frac{r+1}{2}}\|^2_{\H}\nonumber\\&\leq 2\eta_3\|\u(t)\|^2_{\V}+\frac{2}{\mu}\|\f\|^2_{\H}.
		\end{align}
		We use the double integration trick used in \cite{Robinson} to obtain an absorbing ball in $\V$. Integrating the inequality \eqref{3d-ab15} from  $s$ to $t+1$, with $t\leq s<t+1$,  we find 
		\begin{align*}
		\|\u(t+1)\|_{\V}^2&\leq\|\u(s)\|_{\V}^2+ 2\eta_3\int_{s}^{t+1}\|\u(\zeta)\|^2_{\V}\d \zeta+\frac{2}{\mu}\|\f\|^2_{\H}\nonumber\\&\leq \|\u(s)\|_{\V}^2+ 2\eta_3\int_{t}^{t+1}\|\u(\zeta)\|^2_{\V}\d \zeta+\frac{2}{\mu}\|\f\|^2_{\H}.
		\end{align*}
		Let us now integrate both sides of the above inequality with respect to $s$ between $t$  and $t+1$ to obtain 
		\begin{align*}
		\|\u(t+1)\|_{\V}^2&\leq(2\eta_3+1)\int_t^{t+1}\|\u(s)\|_{\V}^2\d s+\frac{2}{\mu}\|\f\|^2_{\H}\nonumber\\&\leq(2\eta_3+1)\left[\frac{1}{\mu}\|\x\|^2_{\H}e^{-\mu\lambda_1 t} + \frac{1}{\mu^3\lambda_1^2}\|\f\|^2_{\H}+\frac{1}{\mu^2\lambda_1}\|\f\|^2_{\H}\right]+\frac{2}{\mu}\|\f\|^2_{\H}\nonumber\\&\leq\frac{2\eta_3+1}{\mu}\|\x\|^2_{\H}e^{-\mu\lambda_1 t} +\frac{1}{\mu}\left[2+\frac{2\eta_3+1}{\mu\lambda_1}+ \frac{2\eta_3+1}{\mu^2\lambda_1^2}\right]\|\f\|^2_{\H},\nonumber\\ \|\u(t+1)\|_{\V}^4&\leq\frac{(2\eta_3+1)^2}{\mu^2}\|\x\|^4_{\H}e^{-2\mu\lambda_1 t} +\frac{2(2\eta_3+1)}{\mu^2}\left[2+\frac{2\eta_3+1}{\mu\lambda_1}+ \frac{2\eta_3+1}{\mu^2\lambda_1^2}\right]\|\f\|^2_{\H}\|\x\|^2_{\H}e^{-\mu\lambda_1 t}\nonumber\\&\quad+\frac{1}{\mu^2}\left[2+\frac{2\eta_3+1}{\mu\lambda_1}+ \frac{2\eta_3+1}{\mu^2\lambda_1^2}\right]^2\|\f\|^4_{\H},
		\end{align*}
		for all $t\geq0$, from which the estimate \eqref{3D_ab1} follows.
		\vskip 0.2 cm
		\noindent 
		\textbf{Case II:} $r=3$ and $2\beta\mu\geq1$. Using \eqref{3}, the Cauchy-Schwarz and Young inequalities, we find 
		\begin{align}
		|(\B(\u),\A\u)|&\leq\||\u||\nabla\u|\|_{\H}\|\A\u\|_{\H}\leq\frac{1}{4\beta}\|\A\u\|_{\H}^2+\beta\||\u||\nabla\u|\|_{\H}^2,\label{3d-ab18}\\
		(\mathcal{C}(\u),\A\u)&=\||\nabla\u||\u|\|^2_{\H}+\frac{1}{2}\|\nabla|\u|^2\|^2_{\H},\label{3d-ab19}\\
		|(\f, \A\u)|&\leq \|\f\|_{\H}\|\A\u\|_{\H}\leq\frac{\mu}{2}\|\A\u\|^2_{\H} + \frac{1}{2\mu}\|\f\|^2_{\H}.\label{3d-ab20}
		\end{align}
		Using \eqref{3d-ab18}-\eqref{3d-ab20} in \eqref{3d-ab5}, we obtain
		\begin{align*}
		\frac{\d}{\d t}\|\u(t)\|^2_{\V}+\bigg(\mu-\frac{1}{2\beta}\bigg)\|\A\u(t)\|^2_{\H}+\beta\|\nabla|\u|^2\|^2_{\H} &\leq\frac{1}{\mu}\|\f\|^2_{\H}.
		\end{align*}
		For $2\beta\mu\geq 1$, performing calculations similar to the case of $r>3$, we deduce that 
		\begin{align*}
		\|\u(t+1)\|_{\V}^2&\leq\int_t^{t+1}\|\u(s)\|_{\V}^2\d s+\frac{1}{\mu}\|\f\|^2_{\H}\leq\frac{1}{\mu}\|\x\|^2_{\H}e^{-\mu\lambda_1 t} +\frac{1}{\mu}\left[1+\frac{1}{\mu\lambda_1}+\frac{1}{\mu^2\lambda_1^2}\right]\|\f\|^2_{\H},\nonumber\\ \|\u(t+1)\|_{\V}^4&\leq\frac{1}{\mu^2}\|\x\|^4_{\H}e^{-2\mu\lambda_1 t} +\frac{2}{\mu^2} \left[1+\frac{1}{\mu\lambda_1}+\frac{1}{\mu^2\lambda_1^2}\right]\|\f\|^2_{\H}\|\x\|^2_{\H}e^{-\mu\lambda_1 t}\nonumber\\&\quad+\frac{1}{\mu^2}\left[1+\frac{1}{\mu\lambda_1}+\frac{1}{\mu^2\lambda_1^2}\right]^2\|\f\|^4_{\H},
		\end{align*}
		for all $t\geq 0$, and hence the estimate \eqref{3D_ab2} follows.  
	\end{proof}
	\begin{lemma}\label{3DUniqueness}
		For $n=3$, let the conditions \eqref{3D-C_1} (for $r>3$ with any $\beta,\mu>0$) and \eqref{3D-C_3} (for $r=3$ with $2\beta\mu\geq1$) be satisfied. For any bounded set $B_{\H}\subset \H$, there exist $T_1>0$ (for $r>3$ with any $\beta,\mu>0$) and $T_2>0$ (for $r=3$ with $2\beta\mu\geq1$) such that for any two solutions $\u_1(\cdot)$ and $\u_2(\cdot)$ of \eqref{D-CBF} with initial data in $B_{\H}$ satisfies:
		\begin{itemize}
			\item [(i)] For $r>3$ with any $\beta,\mu>0$, 	\begin{align}\label{3DUnique1}
			\|\u_1(t)-\u_2(t)\|^2_{\H} \leq e^{\frac{\varrho_1}{2}(t-T)} \|\u_1(T)-\u_2(T)\|^2_{\H}, \ \text{ for all }\  t\geq T_1,
			\end{align}
			where $\varrho_1$ is the constant given by \eqref{3D-C_2}. 
			\item [(ii)] For $r=3$ with $2\beta\mu\geq1$, 	\begin{align}\label{3DUnique2}
			\|\u_1(t)-\u_2(t)\|^2_{\H} \leq e^{\frac{\varrho_2}{2}(t-T)} \|\u_1(T)-\u_2(T)\|^2_{\H}, \ \text{ for all }\  t\geq T_2,
			\end{align}
			where $\varrho_2$ is the constant given by \eqref{3D-C_4}.
		\end{itemize}
	\end{lemma}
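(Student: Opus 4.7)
The plan is to mimic closely the 2D argument in Lemma \ref{Uniqueness}, replacing the 2D trilinear estimate \eqref{b1} with the 3D estimate \eqref{b3}, and then calibrating the Young's inequality constants so that the resulting drift coefficient matches $\varrho_1$ (resp.\ $\varrho_2$) from \eqref{3D-C_2} (resp.\ \eqref{3D-C_4}). The absorbing ball in $\V$ furnished by Lemma \ref{3d-ab} will then be used to control $\|\u_1(t)\|_{\V}^4$ for large~$t$.

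First I set $\y(t)=\u_1(t)-\u_2(t)$, which satisfies the same equation \eqref{uni1} as in the 2D case. Taking the inner product with $\y$, using $b(\u_2,\y,\y)=0$ from \eqref{b0} and the monotonicity \eqref{MO_c} of $\mathcal{C}$, I obtain
\begin{align*}
\tfrac{1}{2}\tfrac{\d}{\d t}\|\y(t)\|_{\H}^{2}+\mu\|\y(t)\|_{\V}^{2}\leq |b(\y(t),\u_1(t),\y(t))|.
\end{align*}
The crucial step is the 3D estimate \eqref{b3}, which yields
\begin{align*}
|b(\y,\u_1,\y)|\leq c_3\|\y\|_{\H}^{1/2}\|\y\|_{\V}^{3/2}\|\u_1\|_{\V}.
\end{align*}
Then Young's inequality with exponents $4/3$ and $4$, tuned so that the $\|\y\|_{\V}^{2}$ contribution carries coefficient $\mu/2$, gives
\begin{align*}
|b(\y,\u_1,\y)|\leq \tfrac{\mu}{2}\|\y\|_{\V}^{2}+\tfrac{27c_3^{4}}{32\mu^{3}}\|\u_1\|_{\V}^{4}\|\y\|_{\H}^{2}.
\end{align*}
Combining the above and invoking the Poincaré inequality \eqref{poin}, I arrive at the differential inequality
\begin{align*}
\tfrac{\d}{\d t}\|\y(t)\|_{\H}^{2}+\left(\mu\lambda_1-\tfrac{27c_3^{4}}{16\mu^{3}}\|\u_1(t)\|_{\V}^{4}\right)\|\y(t)\|_{\H}^{2}\leq 0.
\end{align*}

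At this point the two cases split only through the $\V$-asymptotic bound. For $r>3$, I apply Lemma \ref{3d-ab}(i) with $\varepsilon=\frac{8\mu^{3}\varrho_1}{27c_3^{4}}$, which produces a time $T_1>0$ such that for all $t\geq T_1$,
\begin{align*}
\mu\lambda_1-\tfrac{27c_3^{4}}{16\mu^{3}}\|\u_1(t)\|_{\V}^{4}\geq \mu\lambda_1-\tfrac{27c_3^{4}}{16\mu^{5}}\Big[2+\tfrac{2\eta_3+1}{\mu\lambda_1}+\tfrac{2\eta_3+1}{\mu^{2}\lambda_1^{2}}\Big]^{2}\|\f\|_{\H}^{4}-\tfrac{\varrho_1}{2}=\tfrac{\varrho_1}{2}.
\end{align*}
Gronwall's inequality then gives \eqref{3DUnique1}. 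For $r=3$ with $2\beta\mu\geq 1$, the same argument using Lemma \ref{3d-ab}(ii) with $\varepsilon=\frac{8\mu^{3}\varrho_2}{27c_3^{4}}$ produces a time $T_2>0$ such that the drift coefficient is bounded below by $\varrho_2/2$ for $t\geq T_2$, yielding \eqref{3DUnique2}.

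The main obstacle, and the reason the small–Grashof conditions take the specific forms \eqref{3D-C_2}/\eqref{3D-C_4}, is matching the Young's inequality constant $\frac{27c_3^{4}}{16\mu^{3}}$ against the absorbing–ball bounds from Lemma \ref{3d-ab} so that the smallness hypothesis exactly produces a strictly positive $\varrho_1$ (resp.\ $\varrho_2$); any other splitting of the weights in Young's inequality would force a strictly stronger hypothesis on $\|\f\|_{\H}$. Observe that the nonlinear damping contributes nothing to the $\y$–equation beyond the sign provided by monotonicity, so no separate treatment of the $r=3$ case is needed at the level of the differential inequality; the distinction appears only through which part of Lemma \ref{3d-ab} is invoked.
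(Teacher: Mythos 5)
Your proposal is correct and follows essentially the same route as the paper: the same reduction to the differential inequality via \eqref{b0}, \eqref{MO_c} and \eqref{b3}, the same Young's inequality calibration producing the constant $\frac{27c_3^4}{32\mu^3}$, and the same choice $\varepsilon=\frac{8\varrho_i\mu^3}{27c_3^4}$ in Lemma \ref{3d-ab} to bound the drift coefficient below by $\varrho_i/2$ before applying Gronwall. Nothing further is needed.
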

	\begin{proof}
		\iffalse 
		Since $\u_1(\cdot)$ and $\u_2(\cdot)$ are weak solutions of \eqref{D-CBF}, therefore $\y(\cdot):=\u_1(\cdot)-\u_2(\cdot)$ satisfies the following:
		\begin{align}\label{3d-uni1}
		\frac{\d \y(t)}{\d t} + \mu\A\y &= -\B(\u_1,\u_1)+\B(\u_2,\u_2) -\beta\mathcal{C}(\u_1)+\beta\mathcal{C}(\u_2)\nonumber\\
		&=-\B(\y,\u_1)+\B(\u_2,\y) -\beta\mathcal{C}(\u_1)+\beta\mathcal{C}(\u_2),
		\end{align}
		for a.e. $t\in[0,\tau]$.	Taking inner product of \eqref{3d-uni1} with $\y$, we get
		\begin{align}\label{3d-uni2}
		\frac{1}{2}\frac{\d}{\d t} \|\y\|^2_{\H} + \mu\|\y\|^2_{\V}&= -b(\y,\u_1,\y)+b(\u_2,\y,\y)-\beta\left\langle\mathcal{C}(\u_1)-\mathcal{C}(\u_2),\u_1-\u_2\right\rangle\nonumber \\&\leq -b(\y,\u_1,\y),
		\end{align}
		where we used \eqref{b0} and \eqref{MO_c}. 
		\fi 
		A calculation similar to \eqref{uni2} and  	use of \eqref{b3} yields 
		\begin{align}\label{3d-uni3}
		\frac{1}{2}\frac{\d}{\d t} \|\y(t)\|^2_{\H} + \mu\|\y(t)\|^2_{\V}&\leq -b(\y(t),\u_1(t),\y(t))\leq  c_3\|\y(t)\|^{1/2}_{\H}\|\y(t)\|^{3/2}_{\V}\|\u_1(t)\|_{\V}\nonumber\\&\leq \frac{\mu}{2}\|\y(t)\|_{\V}^2+\frac{27c_3^4}{32\mu^3} \|\y(t)\|^2_{\H}\|\u_1(t)\|^4_{\V},
		\end{align}
		for a.e. $t\in[0,\tau]$.	Also, by Poincar\'e's inequality, the  inequality \eqref{3d-uni3} can be rewritten as
		\begin{align*}
		\frac{\d}{\d t}\|\y(t)\|^2_{\H} + \left(\mu\lambda_1-\frac{27c_3^4}{16\mu^3}\|\u_1(t)\|^4_{\V}\right)\|\y(t)\|^2_{\H}\leq0.
		\end{align*}
		\vskip 0.2 cm
		\noindent
		\textbf{Case I:}  $r>3$ with any $\beta,\mu>0$.
		From Lemma \ref{3d-ab} for $\varepsilon=\frac{8\varrho_1\mu^3}{27c_3^4}$, where $$\varrho_1=\mu\lambda_1-\frac{27c_3^4}{16\mu^5}\left[2+\frac{2\eta_3+1}{\mu\lambda_1}+\frac{2\eta_3+1}{\mu^2\lambda_1^2}\right]^2\|\f\|^4_{\H}>0,$$ one can obtain the existence of a time $T_1=T_{B_{\H},\frac{8\varrho_1\mu^3}{27c_3^4}}>0$ such that
		\begin{align}\label{3d-uni5}
		\mu\lambda_1-\frac{27c_3^4}{16\mu^3}\|\u_1(t)\|^4_{\V}\geq\mu\lambda_1-\frac{27c_3^4}{16\mu^3}\left(\frac{1}{\mu^2}\left[2+\frac{2\eta_3+1}{\mu\lambda_1}+\frac{2\eta_3+1}{\mu^2\lambda_1^2}\right]^2\|\f\|^4_{\H}+\frac{8\varrho_1\mu^3}{27c_3^4}\right)=\frac{\varrho_1}{2},
		\end{align}
		for all $t\geq T_1$.	Hence, we obtain 
		\begin{align*}
		\frac{\d}{\d t}\|\y(t)\|^2_{\H} +\frac{\varrho_1}{2}\|\y(t)\|^2_{\H}\leq 0,  \ \text{ for a.e. }\  t\geq T_1,
		\end{align*}
		and by applying Gronwall's inequality, we arrive at \eqref{3DUnique1}.
		\vskip 0.2 cm
		\noindent
		\textbf{Case II:}  $r=3$ with $2\beta\mu\geq1$.
		From Lemma \ref{3d-ab} for $\varepsilon=\frac{8\varrho_2\mu^3}{27c_3^4}$, where $$\varrho_2=\mu\lambda_1-\frac{27c_3^4}{16\mu^5}\left[1+\frac{1}{\mu\lambda_1}+\frac{1}{\mu^2\lambda_1^2}\right]^2\|\f\|^4_{\H}>0,$$ we infer the existence of  a time $T_2=T_{B_{\H},\frac{8\varrho_2\mu^3}{27c_3^4}}>0$ such that
		\begin{align}\label{3d-uni7}
		\mu\lambda_1-\frac{27c_3^4}{16\mu^3}\|\u_1(t)\|^4_{\V}\geq\mu\lambda_1-\frac{27c_3^4}{16\mu^3}\left(\frac{1}{\mu^2}\left[1+\frac{1}{\mu\lambda_1}+\frac{1}{\mu^2\lambda_1^2}\right]^2\|\f\|^4_{\H}+\frac{8\varrho_2\mu^3}{27c_3^4}\right)=\frac{\varrho_2}{2},
		\end{align}
		for all $t\geq T_2$.	Hence, we obtain 
		\begin{align*}
		\frac{\d}{\d t}\|\y(t)\|^2_{\H} +\frac{\varrho_2}{2}\|\y(t)\|^2_{\H}\leq 0, \  \text{ for a.e. } \ t\geq T_2,
		\end{align*}
		and by applying the Gronwall inequality, \eqref{3DUnique2} follows.
	\end{proof}
	The following theorem establishes the existence of a singleton attractor for the 3D CBF equations \eqref{D-CBF} and a proof of this theorem can be obtained in a similar way as in the proof of Theorem \ref{D-SA}, by using Theorem \ref{3DUniqueness}.
	\begin{theorem}\label{3D-SA}
		Let the conditions \eqref{3D-C_1} (for $r>3$ with any $\beta,\mu>0$) and \eqref{3D-C_3} (for $r=3$ with $2\beta\mu\geq1$) be satisfied. Then the global attractor $\mathcal{A}$ of the 3D CBF equations \eqref{D-CBF} is a singleton $\mathcal{A}=\{\mathbf{a}_{*}\}$ with $\mathbf{a}_{*}\in\V$.
	\end{theorem}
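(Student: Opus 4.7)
The plan is to mimic the proof of Theorem \ref{D-SA}, but invoke Lemma \ref{3DUniqueness} (in the appropriate case) in place of Lemma \ref{Uniqueness}. Since the global attractor $\mathcal{A}$ is a nonempty bounded subset of $\H$ that is invariant under the semigroup generated by \eqref{D-CBF}, every point $\mathbf{a} \in \mathcal{A}$ lies on a complete bounded trajectory that remains inside $\mathcal{A}$ for all time. Pick any two points $\mathbf{a}_1, \mathbf{a}_2 \in \mathcal{A}$ and let $\upeta_1(\cdot), \upeta_2(\cdot)$ denote two such complete trajectories with $\upeta_i(0) = \mathbf{a}_i$. For every $t \geq 0$, the initial data $\upeta_i(-t) \in \mathcal{A}$ lie in the bounded set $B_\H := \mathcal{A}$, and $\mathbf{a}_i = \u(t, \upeta_i(-t))$ by the semigroup property.

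Applied to $B_\H = \mathcal{A}$, Lemma \ref{3DUniqueness} furnishes a time $T_* > 0$ (equal to $T_1$ if $r > 3$, or $T_2$ if $r = 3$ with $2\beta\mu \geq 1$) and a positive constant $\varrho_* \in \{\varrho_1, \varrho_2\}$ from \eqref{3D-C_2} or \eqref{3D-C_4} such that
\begin{align*}
\|\mathbf{a}_1 - \mathbf{a}_2\|_\H^2
= \|\u(t,\upeta_1(-t)) - \u(t,\upeta_2(-t))\|_\H^2
\leq e^{-\frac{\varrho_*}{2}(t - T_*)} \|\u(T_*,\upeta_1(-t)) - \u(T_*,\upeta_2(-t))\|_\H^2,
\end{align*}
valid for all $t \geq T_*$. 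Because $\u(T_*, \upeta_i(-t)) \in \mathcal{A}$ (by invariance), the right-hand side is bounded above by $e^{-\frac{\varrho_*}{2}(t - T_*)} [\mathrm{Diameter}(\mathcal{A})]^2$, which tends to $0$ as $t \to \infty$. Hence $\mathbf{a}_1 = \mathbf{a}_2$, so $\mathcal{A} = \{\mathbf{a}_*\}$ is a singleton.

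To upgrade the regularity to $\mathbf{a}_* \in \V$, I would observe that $\mathcal{A}$ is bounded in $\H$, so Lemma \ref{3d-ab} yields a time $T_{\mathcal{A},1}$ after which every solution starting in $\mathcal{A}$ lies in a ball of $\V$ of explicit radius; by invariance $\mathbf{a}_* = \u(t, \mathbf{a}_*)$ for all $t \geq T_{\mathcal{A},1}$, so $\mathbf{a}_* \in \V$.

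The main obstacle, which is already absorbed into Lemma \ref{3DUniqueness}, is controlling the term $\|\u_1(t)\|_\V^4$ produced by the 3D trilinear estimate \eqref{b3} (as opposed to the quadratic $\|\u_1(t)\|_\V^2$ coming from \eqref{b1} in 2D). This is precisely what forces the stronger smallness conditions \eqref{3D-C_1}/\eqref{3D-C_3}, and requires a $\V$-absorbing estimate at the fourth power (as provided by Lemma \ref{3d-ab}). Once those ingredients are in place, the attractor-level argument is a routine repetition of the one used for Theorem \ref{D-SA}, and no further structural difficulty arises from working in three dimensions under the stated restrictions on $r$ and $2\beta\mu$.
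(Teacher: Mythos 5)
Your proposal is correct and follows essentially the same route as the paper, which itself only remarks that Theorem \ref{3D-SA} is proved exactly as Theorem \ref{D-SA} with Lemma \ref{3DUniqueness} replacing Lemma \ref{Uniqueness}; your execution of that substitution (complete bounded trajectories through two points of $\mathcal{A}$, the exponential contraction after the lapsing time $T_*$, and the diameter bound) is the intended argument, and you even correct the sign of the exponent that is mistyped in the lemma statements. The added observation that $\mathbf{a}_*\in\V$ follows from the $\V$-absorbing estimate of Lemma \ref{3d-ab} together with invariance is a reasonable supplement the paper leaves implicit.
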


	\section{2D Stochastic CBF Equations with Additive Noise} \label{sec4}\setcounter{equation}{0}
	This section is devoted for discussing the existence of random attractors for 2D stochastic CBF equations perturbed by additive white noise and its convergence rate towards the deterministic singleton attractor as the noise intensity approaches zero.
	\subsection{Preliminaries}
	We consider the 2D stochastic convective Brinkman-Forchheimer equations perturbed by additive white noise as follows:
	\begin{equation}\label{SCBF-add}
	\left\{
	\begin{aligned}
	\d\u_{\varepsilon}(t)+\left[\mu \A\u_{\varepsilon}(t)+\B(\u_{\varepsilon}(t)) +\beta\mathcal{C}(\u_{\varepsilon}(t))\right]\d t&=\f \d t+\varepsilon \Phi(x)\d \W(t) , \quad t\geq 0, \\ 
	\u_{\varepsilon}(0)&=\x.
	\end{aligned}
	\right.
	\end{equation}
	The term $\varepsilon \Phi(x)$ denotes the perturbation intensity with $\varepsilon\in(0,1]$ and $\Phi(x)\in\D(\A)$, and $\W(t,\omega)$ is the standard scalar Wiener process on probability space $(\Omega, \mathscr{F}, \mathbb{P}),$ where $$\Omega=\{\omega\in C(\R;\R):\omega(0)=0\},$$ endowed with the compact-open topology given by the complete metric 
	\begin{align*}
	d_{\Omega}(\omega,\omega'):=\sum_{m=1}^{\infty} \frac{1}{2^m}\frac{\|\omega-\omega'\|_{m}}{1+\|\omega-\omega'\|_{m}},\text{ where } \|\omega-\omega'\|_{m}:=\sup_{-m\leq t\leq m} |\omega(t)-\omega'(t)|,
	\end{align*}
	and $\mathscr{F}$ the Borel sigma-algebra induced by the compact-open topology of $\Omega,$ $\mathbb{P}$ the two-sided Wiener measure on $(\Omega,\mathscr{F})$. From \cite{FS}, it is clear that  the measure $\mathbb{P}$ is ergodic and invariant under the translation-operator group $\{\vartheta_t\}_{t\in\R}$ on $\Omega$ defined by 
	\begin{align*}
	\vartheta_t \omega(\cdot) = \omega(\cdot+t)-\omega(t), \ \text{ for all }\ t\in\R, \ \omega\in \Omega.
	\end{align*}  The operator is known as Wiener shift operator. 
	
	Consider for some $\alpha>0$ (which will be specified later) 
	\begin{align}\label{OU1}
	\z(\vartheta_{t}\omega) =  \int_{-\infty}^{t} e^{-\alpha(t-\tau)}\d \W(\tau), \ \ \omega\in \Omega,
	\end{align} which is a stationary solution of the one dimensional Ornstein-Uhlenbeck equation
	\begin{align}\label{OU2}
	\d\z(\vartheta_t\omega) + \alpha\z(\vartheta_t\omega)\d t =\d\W(t).
	\end{align}
	It is known from \cite{FAN} that there exists a $\vartheta$-invariant subset $\widetilde{\Omega}\subset\Omega$ of full measure such that $\z(\vartheta_t\omega)$ is continuous in $t$ for every $\omega\in \widetilde{\Omega},$ and
	\begin{align}
	\mathbb{E}\left(e^{\delta\int_{s}^{s+t}|\z(\vartheta_{\xi}\omega)|\d \xi}\right)&\leq e^{\frac{\delta t}{\sqrt{\alpha}}}, \ \text{ for all } s\in \R, \ \alpha^3\geq\delta^2\geq 0, t\geq 0,\label{Z1}\\
	\mathbb{E}\left(|\z(\vartheta_s\omega)|^{\xi}\right)&=\frac{\Gamma\left(\frac{1+\xi}{2}\right)}{\sqrt{\pi\alpha^{\xi}}}, \ \text{ for all } \xi>0, s\in \R,\label{Z2}\\
	\lim_{t\to \pm \infty} \frac{|\z(\vartheta_t\omega)|}{|t|}&=0,\label{Z3}\\
	\lim_{t\to \pm \infty} \frac{1}{t} \int_{0}^{t} \z(\vartheta_{\xi}\omega)\d\xi &=0,\label{Z4}\\
	\lim_{t\to \infty} e^{-\delta t}|\z(\vartheta_{-t}\omega)| &=0, \ \text{ for all } \ \delta>0,\label{Z5}
	\end{align} where $\Gamma$ is the Gamma function. From now onward, we will not distinguish between $\widetilde{\Omega}$ and $\Omega$.
	\begin{definition}
		A random set $D(\omega)$ in Banach space $\mathbb{X}$ is called \emph{tempered} if for any $\delta>0$ 
		\begin{align*}
		\lim_{t\to \infty} e^{-\delta t}\|D(\vartheta_{-t}\omega)\|_{\X} = \lim_{t\to \infty} e^{-\delta t}\left(\sup_{x\in D(\vartheta_{-t}\omega)}\|x\|_{\X}\right)=0.
		\end{align*}
		Let us define a class $\mathfrak{D}$ of all tempered random sets in $\H$, that is, 
		\begin{align*}
		\mathfrak{D}= \{D(\omega): D(\omega) \emph{ is a tempered random set in }\H\}.
		\end{align*}
	\end{definition}
	Next, we provide a result which is proved in \cite{HCPEK} and useful in our further analysis.
	\begin{lemma}[Lemma 10.6, \cite{HCPEK}]\label{z_esti}
		For any $T, k>0$,
		\begin{align*}
		\lim_{t\to \infty} \frac{\int_{-t}^{T-t}|\z(\vartheta_s\omega)|^k\d s}{t-T} =0, \ \text{ for all }\ \omega\in \Omega.
		\end{align*}
	\end{lemma}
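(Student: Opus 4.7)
The key reduction is a change of variables: setting $u=-s$, the integral becomes
\begin{equation*}
I(t):=\int_{-t}^{T-t}|\z(\vartheta_s\omega)|^k\,\d s = \int_{t-T}^{t}|\z(\vartheta_{-u}\omega)|^k\,\d u,
\end{equation*}
that is, an integral over a window of \emph{fixed} length $T$ that slides off to $+\infty$ along the $u$-axis. So it suffices to prove $I(t)=o(t-T)$ as $t\to\infty$.

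The Wiener-shift dynamical system $(\Omega,\mathscr{F},\mathbb{P},\{\vartheta_t\}_{t\in\R})$ is measure-preserving and ergodic (the property already underlying \eqref{Z4}), and formula \eqref{Z2} shows $|\z|^k\in L^1(\Omega,\mathbb{P})$ with
\begin{equation*}
c_k:=\E\bigl[|\z|^k\bigr]=\frac{\Gamma\bigl(\tfrac{k+1}{2}\bigr)}{\sqrt{\pi\alpha^k}}.
\end{equation*}
Birkhoff's pointwise ergodic theorem, applied to the backward-time flow $u\mapsto\vartheta_{-u}$ (equally measure-preserving and ergodic), yields for every $\omega$ outside a $\mathbb{P}$-null set
\begin{equation*}
\lim_{\tau\to\infty}\frac{1}{\tau}\int_0^{\tau}|\z(\vartheta_{-u}\omega)|^k\,\d u = c_k.
\end{equation*}
Following the paper's convention, I would absorb this null set into $\widetilde{\Omega}$ and continue to write $\widetilde{\Omega}\equiv\Omega$.

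I then conclude by additivity:
\begin{equation*}
I(t)=\int_0^{t}|\z(\vartheta_{-u}\omega)|^k\,\d u - \int_0^{t-T}|\z(\vartheta_{-u}\omega)|^k\,\d u = \bigl(c_k t+o(t)\bigr)-\bigl(c_k(t-T)+o(t-T)\bigr) = c_k T + o(t),
\end{equation*}
where the $o(\cdot)$ terms come from the ergodic limit applied at the upper limits $t$ and $t-T$. Dividing by $t-T$ and letting $t\to\infty$ gives the claim. The only delicate point I foresee is the bookkeeping of the exceptional null sets, which is harmless under the identification $\widetilde{\Omega}\equiv\Omega$. A purely elementary attempt using only the sublinear growth bound \eqref{Z3} produces an upper bound of order $\varepsilon^k T t^{k-1}/(t-T)$ for $I(t)/(t-T)$, which does \emph{not} vanish when $k>1$; this is precisely why one needs the ergodic theorem rather than a pointwise growth estimate.
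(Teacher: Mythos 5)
Your argument is correct, and it is worth noting that the paper itself offers no proof of this statement at all: Lemma \ref{z_esti} is imported verbatim as Lemma 10.6 of \cite{HCPEK}, so there is nothing internal to compare against. Your route --- substitute $u=-s$ to see a sliding window of fixed length $T$, invoke Birkhoff's ergodic theorem for the (measure-preserving, ergodic) backward Wiener shift together with the integrability of $|\z|^k$ from \eqref{Z2}, and write the window integral as a difference of two ergodic averages --- is the standard proof of this fact and almost certainly the one in the cited source; your observation that the sublinear growth \eqref{Z3} alone cannot close the argument for $k>1$ is also accurate and explains why the ergodic theorem is genuinely needed. The only loose end is the dependence of the exceptional null set on $k$: since the lemma is asserted for all $k>0$ on a single $\vartheta$-invariant set of full measure, you should reduce to countably many exponents, e.g.\ via $|\z|^k\leq 1+|\z|^{\lceil k\rceil}$ combined with the squeeze $0\leq I(t)/(t-T)$, before absorbing the resulting countable union of null sets into $\widetilde{\Omega}$. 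With that one sentence added, the proof is complete.
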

	\subsection{Uniform estimates of solutions}
	Let us define $$\v_{\varepsilon}(t, \omega, \v_{\varepsilon}(0))=\u_{\varepsilon}(t, \omega, \v_{\varepsilon}(0))-\varepsilon\z(\vartheta_t\omega)\Phi(x),$$ with $\v_{\varepsilon}(0)=\u_{\varepsilon}(0)-\varepsilon\z(\omega)\Phi(x),$ where $\u_{\varepsilon}(\cdot)$ is the solution of \eqref{SCBF-add}. Then $\v_{\varepsilon}(\cdot)$ satisfies the following:
	\begin{equation}\label{CCBF-add}
	\left\{
	\begin{aligned}
	\frac{\d\v_{\varepsilon}(t)}{\d t}+\mu \A\v_{\varepsilon}(t)&+\B\big(\v_{\varepsilon}(t)+\varepsilon\z(\vartheta_t\omega)\Phi\big) +\beta\mathcal{C}\big(\v_{\varepsilon}(t)+\varepsilon\z(\vartheta_t\omega)\Phi\big)\\&=\f + \varepsilon\alpha\z(\vartheta_t\omega)\Phi-\varepsilon\mu\z(\vartheta_t\omega)\A \Phi , \quad t\geq 0, \\ 
	\v_{\varepsilon}(0)&=\x-\varepsilon\z(\omega)\Phi(x).
	\end{aligned}
	\right.
	\end{equation}
	\begin{lemma}
		Let $n=2$ and $\f, \v_{\varepsilon}(0)\in\H.$ Then for any $t\geq T>0$, the solution $\v_{\varepsilon}(\cdot)$ of \eqref{CCBF-add} satisfies
		\begin{align}\label{SH_ab}
		&	\|\v_{\varepsilon}(T,\vartheta_{-t}\omega, \v_{\varepsilon}(0))\|^2_{\H} +\int_{0}^{T} e^{\int_{T}^{\tau}\left(\frac{\varrho}{8}-\varepsilon^2K_1|\z(\vartheta_{s-t}\omega)|^2\right)\d s} \|\v_{\varepsilon}(\tau,\vartheta_{-t}\omega,\v_{\varepsilon}(0))\|^2_{\V}\ \d \tau \nonumber\\& \leq  e^{-\frac{\varrho T}{8}+\varepsilon^2K_1\int_{-t}^{T-t}|\z(\vartheta_{s}\omega)|^2\d s}\|\v_{\varepsilon}(0)\|^2_{\H}\nonumber\\&\quad+e^{\frac{\varrho}{8}(t-T)}\int_{-t}^{T-t}C e^{\frac{\varrho\tau}{8}+\varepsilon^2K_1\int_{\tau}^{T-t}|\z(\vartheta_{s}\omega)|^2\d s}\ \big(\|\f\|_{\H}^2 +\varepsilon^{j} |\z(\vartheta_{\tau}\omega)|^{j}+1\big)\ \d\tau,
		\end{align}
		where 
		\begin{align}\label{K1}
		K_1:=\max\left\{\frac{16\lambda_1 c_1^2 \|\Phi\|^2_{\V}}{\varrho},\frac{8c_2^2\|\A\Phi\|^2_{\H}}{\mu\lambda_1^2}\right\} \ \text{ and } \ j=\max\{4,r+1\}.
		\end{align}
	\end{lemma}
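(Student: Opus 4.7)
The plan is to derive a pathwise differential inequality for $\|\v_\e(t)\|_\H^2$, apply Gronwall's lemma on $[0,T]$, and then substitute $\omega\mapsto\vartheta_{-t}\omega$ together with a change of variable to shift the time window to $[-t,T-t]$. To begin, I take the $\H$-inner product of the evolution equation in \eqref{CCBF-add} with $\v_\e(t)$ to obtain the energy identity
\begin{align*}
\frac{1}{2}\frac{\d}{\d t}\|\v_\e\|_\H^2+\mu\|\v_\e\|_\V^2+\langle\B(\u),\v_\e\rangle+\beta\langle\mathcal{C}(\u),\v_\e\rangle=(\f,\v_\e)+\e\alpha\z(\vartheta_t\omega)(\Phi,\v_\e)-\e\mu\z(\vartheta_t\omega)(\A\Phi,\v_\e),
\end{align*}
where I write $\u:=\v_\e+\e\z(\vartheta_t\omega)\Phi$ for brevity. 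Using the antisymmetries in \eqref{b0} together with $b(\v_\e,\Phi,\Phi)=0$ (since $\nabla\cdot\v_\e=0$) and $b(\Phi,\Phi,\Phi)=0$, the nonlinear transport term collapses to
$$\langle\B(\u),\v_\e\rangle=\e\z(\vartheta_t\omega)\,b(\v_\e,\Phi,\v_\e)+\e^2\z(\vartheta_t\omega)^2\,b(\Phi,\Phi,\v_\e).$$

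Next I estimate each of the remaining terms. The first summand above is bounded using the 2D estimate \eqref{b1} followed by Young's inequality, contributing $\tfrac{\mu}{4}\|\v_\e\|_\V^2$ on the left and $\tfrac{c_1^2}{\mu}\e^2|\z|^2\|\Phi\|_\V^2\|\v_\e\|_\H^2$ on the right. The second summand is treated with \eqref{b2} (valid since $\Phi\in\D(\A)$) and Young's inequality, generating $\tfrac{\varrho}{16}\|\v_\e\|_\H^2$ on the left together with a random forcing of order $\e^4|\z|^4$ on the right; this is the origin of the exponent $4$ in $j=\max\{4,r+1\}$. For the Forchheimer term, the decomposition $\v_\e=\u-\e\z\Phi$ gives $\langle\mathcal{C}(\u),\v_\e\rangle=\|\u\|_{\wi\L^{r+1}}^{r+1}-\e\z\langle\mathcal{C}(\u),\Phi\rangle$, and Hölder plus Young's inequality produce the nonnegative dissipation $\tfrac{\beta}{2}\|\u\|_{\wi\L^{r+1}}^{r+1}$ on the left plus a residue of order $\e^{r+1}|\z|^{r+1}$ on the right, accounting for the other half of $j$. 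Finally, the three linear right-hand-side terms are bounded by Cauchy--Schwarz and Young, absorbing another $\tfrac{\varrho}{16}\|\v_\e\|_\H^2$ on the left and adding $C(\|\f\|_\H^2+\e^2|\z|^2)$ on the right, the latter being further absorbed into $C(\e^j|\z|^j+1)$ by AM--GM since $\e\in(0,1]$.

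Combining these estimates and invoking Poincar\'e's inequality together with the definition \eqref{C_2} of $\varrho$ to split part of the dissipation $\mu\|\v_\e\|_\V^2$ into an $\H$-bound on the left, I arrive at a pathwise differential inequality of the form
\begin{align*}
\frac{\d}{\d t}\|\v_\e(t)\|_\H^2+\left(\frac{\varrho}{8}-\e^2K_1|\z(\vartheta_t\omega)|^2\right)\|\v_\e(t)\|_\H^2+\|\v_\e(t)\|_\V^2\le C\bigl(\|\f\|_\H^2+\e^j|\z(\vartheta_t\omega)|^j+1\bigr),
\end{align*}
with $K_1$ being the larger of the two coefficients of $\e^2|\z|^2\|\v_\e\|_\H^2$ produced by the $c_1$-contribution and by the $c_2$-driven Poincar\'e absorption, which is precisely \eqref{K1}. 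The conclusion then follows from Gronwall's lemma: multiplying through by the integrating factor $\exp\bigl(\int_0^t(\tfrac{\varrho}{8}-\e^2K_1|\z(\vartheta_s\omega)|^2)\d s\bigr)$, integrating on $[0,T]$, dividing by the factor at $t=T$, replacing $\omega$ by $\vartheta_{-t}\omega$, and substituting $\tau\mapsto\tau-t$ in the forcing integral. The main obstacle will be the careful bookkeeping of the Young's-inequality splits so that the two different bilinear bounds \eqref{b1} and \eqref{b2} feed into the single constant $K_1$ of \eqref{K1} with the precise exponent $\tfrac{\varrho}{8}$, while simultaneously packaging the three distinct sources of random forcing (the Forchheimer Hölder residue $\e^{r+1}|\z|^{r+1}$, the quadratic bilinear residue $\e^4|\z|^4$, and the linear forcing residue $\e^2|\z|^2$) into the single term $\e^j|\z|^j+1$ with $j=\max\{4,r+1\}$.
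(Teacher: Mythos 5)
Your proposal is correct and follows essentially the same route as the paper: the same collapse of the trilinear term to $\e\z\, b(\v_\e,\Phi,\v_\e)+\e^2\z^2 b(\Phi,\Phi,\v_\e)$ estimated via \eqref{b1} and \eqref{b2}, the same splitting $\langle\mathcal{C}(\u),\v_\e\rangle=\|\u\|_{\wi\L^{r+1}}^{r+1}-\e\z\langle\mathcal{C}(\u),\Phi\rangle$ giving the $\e^{r+1}|\z|^{r+1}$ residue, and the same integrating-factor/shift argument. (The identities actually invoked in the collapse are $b(\v_\e,\v_\e,\v_\e)=0$ and $b(\Phi,\v_\e,\v_\e)=0$ from \eqref{b0}, not $b(\v_\e,\Phi,\Phi)=0$, and your Young splits yield a coefficient dominated by the stated $K_1$, so the claimed inequality follows.)
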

	\begin{proof}
		Taking the inner product with $\v_{\varepsilon}(\cdot)$ to the first equation in \eqref{CCBF-add}, we have
		\begin{align}\label{H1}
		\frac{1}{2}\frac{\d}{\d t} \|\v_{\varepsilon}(t)\|^2_{\H}=&-\mu \|\v_{\varepsilon}(t)\|^2_{\V} - b\big(\v_{\varepsilon}(t)+\varepsilon\z(\vartheta_t\omega)\Phi, \v_{\varepsilon}(t)+\varepsilon\z(\vartheta_t\omega)\Phi, \v_{\varepsilon}(t)\big)\nonumber\\&-\beta\left\langle \mathcal{C}(\v_{\varepsilon}(t)+\varepsilon\z(\vartheta_t\omega)\Phi),\v_{\varepsilon}(t)\right\rangle+\big(\f,\v_{\varepsilon}(t)\big)+\big(\varepsilon\alpha\z(\vartheta_t\omega)\Phi, \v_{\varepsilon}(t)\big) \nonumber\\&-\big(\varepsilon\mu\z(\vartheta_t\omega)\A \Phi,\v_{\varepsilon}(t)\big),\nonumber\\
		=&-\mu \|\v_{\varepsilon}(t)\|^2_{\V} - b\big(\v_{\varepsilon}(t)+\varepsilon\z(\vartheta_t\omega)\Phi,\varepsilon\z(\vartheta_t\omega)\Phi, \v_{\varepsilon}(t)\big)\nonumber\\&-\beta\|\v_{\varepsilon}(t)+\varepsilon\z(\vartheta_t\omega)\Phi\|^{r+1}_{\wi \L^{r+1}}+\beta\left\langle \mathcal{C}(\v_{\varepsilon}(t)+\varepsilon\z(\vartheta_t\omega)\Phi),\varepsilon\z(\vartheta_t\omega)\Phi\right\rangle\nonumber\\&+\big(\f,\v_{\varepsilon}(t)\big)+\big(\varepsilon\alpha\z(\vartheta_t\omega)\Phi, \v_{\varepsilon}(t)\big) -\big(\varepsilon\mu\z(\vartheta_t\omega)\A\Phi,\v_{\varepsilon}(t)\big),
		\end{align}
		for a.e. $t\in[0,\tau]$,	where we have used the fact that $b\big(\v_{\varepsilon}+\varepsilon\z(\vartheta_t\omega)\Phi,\v_{\varepsilon}, \v_{\varepsilon}\big)=0$. Applying H\"older's and Young's inequalities, we obtain
		\begin{align}\label{H2}
		\big|\big(\varepsilon\alpha\z(\vartheta_t\omega)\Phi, \v_{\varepsilon}\big)\big|+\big|\big(\f,\v_{\varepsilon}\big)\big|+\big|\big(\varepsilon\mu\z(\vartheta_t\omega)\A\Phi,\v_{\varepsilon}\big)\big|\leq C\|\f\|_{\H}^2+ \varepsilon^2C |\z(\vartheta_t\omega)|^2 + \frac{\varrho}{64} \|\v_{\varepsilon}\|_{\H}^2.
		\end{align}
		By \eqref{poin}, \eqref{b1} and Young's inequality, we estimate 
		\begin{align}\label{H3}
		&\big|b\big(\v_{\varepsilon}+\varepsilon\z(\vartheta_t\omega)\Phi, \varepsilon\z(\vartheta_t\omega)\Phi, \v_{\varepsilon}\big)\big|\nonumber\\&\leq\big|b\big(\v_{\varepsilon}, \varepsilon\z(\vartheta_t\omega)\Phi, \v_{\varepsilon}\big)\big|+|\varepsilon\z(\vartheta_t\omega)|^2\big|b(\Phi,\Phi, \v_{\varepsilon})\big|\nonumber\\&\leq \varepsilon c_1|\z(\vartheta_{t}\omega)|\|\Phi\|_{\V}\|\v_{\varepsilon}\|_{\H}\|\v_{\varepsilon}\|_{\V}+\varepsilon^2c_2|\z(\vartheta_{t}\omega)|^2\|\Phi\|_{\H}^{1/2}\|\Phi\|_{\V}\|\A\Phi\|^{1/2}_{\H}\|\v_{\varepsilon}\|_{\H}\nonumber\\&\leq \frac{8\varepsilon^2\lambda_1c_1^2|\z(\vartheta_{t}\omega)|^2 \|\Phi\|_{\V}^2 }{\varrho}\|\v_{\varepsilon}\|^2_{\H}+\frac{\varrho}{32\lambda_1}\|\v_{\varepsilon}\|^2_{\V}+ \varepsilon^4C|\z(\vartheta_{t}\omega)|^4 +\frac{\varrho}{64}\|\v_{\varepsilon}\|^2_{\H}\nonumber\\&\leq \frac{\varepsilon^2K_1|\z(\vartheta_{t}\omega)|^2}{2}\|\v_{\varepsilon}\|^2_{\H}+\frac{\varrho}{32\lambda_1}\|\v_{\varepsilon}\|^2_{\V}+ \varepsilon^4C|\z(\vartheta_{t}\omega)|^4 +\frac{\varrho}{64}\|\v_{\varepsilon}\|^2_{\H},
		\end{align}
		where $K_1$ is given by \eqref{K1}. Making use of H\"older's and Young's inequalities, we obtain 
		\begin{align}\label{H4}
		\beta\left\langle \mathcal{C}(\v_{\varepsilon}+\varepsilon\z(\vartheta_t\omega)\Phi),\varepsilon\z(\vartheta_t\omega)\Phi\right\rangle&\leq \beta|\varepsilon\z(\vartheta_t\omega)|\|\v_{\varepsilon}+\varepsilon\z(\vartheta_t\omega)\Phi\|^{r}_{\wi \L^{r+1}}\|\Phi\|_{\wi \L^{r+1}}\nonumber\\&\leq\frac{\beta}{2}\|\v_{\varepsilon}+\varepsilon\z(\vartheta_t\omega)\Phi\|^{r+1}_{\wi \L^{r+1}}+\varepsilon^{r+1} C |\z(\vartheta_t\omega)|^{r+1} .
		\end{align}
		Combining \eqref{H2}-\eqref{H4} and using it in \eqref{H1}, we find
		\begin{align*}
		&\frac{1}{2}\frac{\d}{\d t} \|\v_{\varepsilon}(t)\|^2_{\H}+\mu \|\v_{\varepsilon}(t)\|^2_{\V}+\frac{\beta}{2}\|\v_{\varepsilon}(t)+\varepsilon\z(\vartheta_t\omega)\Phi\|^{r+1}_{\wi \L^{r+1}}\nonumber\\&\leq \frac{\varepsilon^2K_1|\z(\vartheta_{t}\omega)|^2}{2}\|\v_{\varepsilon}(t)\|^2_{\H}+\frac{\varrho}{32\lambda_1}\|\v_{\varepsilon}(t)\|^2_{\V}+\frac{\varrho}{32} \|\v_{\varepsilon}(t)\|_{\H}^2+C\|\f\|_{\H}^2 +  \varepsilon^2C |\z(\vartheta_t\omega)|^2\nonumber\\&\quad+ \varepsilon^4C|\z(\vartheta_{t}\omega)|^4+\varepsilon^{r+1} C |\z(\vartheta_t\omega)|^{r+1},
		\end{align*}
		for a.e. $t\in[0,\tau]$. Arranging constants carefully (in order to prove further results), we deduce that 
		\begin{align*}
		&	\frac{1}{2} \frac{\d}{\d t} \|\v_{\varepsilon}(t)\|^2_{\H} + \left(\mu-\frac{\varrho}{8\lambda_1}+\frac{\varrho}{8\lambda_1}-\frac{\varrho}{32\lambda_1}\right)\|\v_{\varepsilon}(t)\|^2_{\V} -\left(\frac{\varepsilon^2K_1|\z(\vartheta_{t}\omega)|^2}{2}+\frac{\varrho}{32}\right)\|\v_{\varepsilon}(t)\|^2_{\H} \nonumber\\&\leq C\|\f\|_{\H}^2 +  \varepsilon^2C |\z(\vartheta_t\omega)|^2+ \varepsilon^4C|\z(\vartheta_{t}\omega)|^4+\varepsilon^{r+1} C |\z(\vartheta_t\omega)|^{r+1},
		\end{align*}
		for a.e. $t\in[0,\tau]$. By \eqref{poin} and Young's inequality, we have
		\begin{align}\label{H5}
		&	 \frac{\d}{\d t} \|\v_{\varepsilon}(t)\|^2_{\H} + \left(2\mu-\frac{\varrho}{4\lambda_1}\right)\|\v_{\varepsilon}(t)\|^2_{\V} +\left(\frac{\varrho}{8}-\varepsilon^2K_1|\z(\vartheta_{t}\omega)|^2\right)\|\v_{\varepsilon}(t)\|^2_{\H}\nonumber\\&\leq C\|\f\|_{\H}^2 +  \varepsilon^{j}C |\z(\vartheta_t\omega)|^{j}+C,
		\end{align}
		where $j=\max\{4,r+1\}$, for a.e. $t\in[0,\tau]$. By means of variation of constants formula, we obtain 
		\begin{align*}
		&	\|\v_{\varepsilon}(t)\|^2_{\H} +\left(2\mu-\frac{\varrho}{4\lambda_1}\right)\int_{0}^{t} e^{\int_{t}^{\tau}\left(\frac{\varrho}{8}-\varepsilon^2K_1|\z(\vartheta_{s}\omega)|^2\right)\d s} \|\v_{\varepsilon}(\tau)\|^2_{\V}\ \d \tau \nonumber\\& \leq e^{-\int_{0}^{t}\left(\frac{\varrho}{8}-\varepsilon^2K_1|\z(\vartheta_{s}\omega)|^2\right)\d s}\|\v_{\varepsilon}(0)\|^2_{\H}+\int_{0}^{t}C e^{\int_{t}^{\tau}\left(\frac{\varrho}{8}-\varepsilon^2K_1|\z(\vartheta_{s}\omega)|^2\right)\d s}\ \big(\|\f\|_{\H}^2 + \varepsilon^{j} |\z(\vartheta_t\omega)|^{j}+1\big)\ \d\tau.
		\end{align*}
		Therefore, for any $t\geq T>0$, we get
		\begin{align*}
		&	\|\v_{\varepsilon}(T,\vartheta_{-t}\omega, \v_{\varepsilon}(0))\|^2_{\H} +\left(2\mu-\frac{\varrho}{4\lambda_1}\right)\int_{0}^{T} e^{\int_{T}^{\tau}\left(\frac{\varrho}{8}-\varepsilon^2K_1|\z(\vartheta_{s-t}\omega)|^2\right)\d s} \|\v_{\varepsilon}(\tau,\vartheta_{-t}\omega,\v_{\varepsilon}(0))\|^2_{\V}\ \d \tau \nonumber\\& \leq e^{-\int_{0}^{T}\left(\frac{\varrho}{8}-\varepsilon^2K_1|\z(\vartheta_{s-t}\omega)|^2\right)\d s}\|\v_{\varepsilon}(0)\|^2_{\H}+\int_{0}^{T}C e^{\int_{T}^{\tau}\left(\frac{\varrho}{8}-\varepsilon^2K_1|\z(\vartheta_{s-t}\omega)|^2\right)\d s}\ \big(\|\f\|_{\H}^2 \nonumber\\&\quad+ \varepsilon^{j} |\z(\vartheta_t\omega)|^{j}+1\big)\ \d\tau\nonumber\\& = e^{-\frac{\varrho T}{8}+\varepsilon^2K_1\int_{-t}^{T-t}|\z(\vartheta_{s}\omega)|^2\d s}\|\v_{\varepsilon}(0)\|^2_{\H}+e^{\frac{\varrho}{8}(t-T)}\int_{-t}^{T-t}C e^{\frac{\varrho\tau}{8}+\varepsilon^2K_1\int_{\tau}^{T-t}|\z(\vartheta_{s}\omega)|^2\d s}\ \big(\|\f\|_{\H}^2 \nonumber\\&\quad+  \varepsilon^{j} |\z(\vartheta_{\tau}\omega)|^{j}+1\big)\ \d\tau.
		\end{align*}
		Since $2\mu-\frac{\varrho}{4\lambda_1}>0$, the lemma follows.
	\end{proof}
	\begin{lemma}
		For $n=2$, let $\f, \v_{\varepsilon}(0)\in\H.$ Then for any $t\geq \upeta>0$, the solution $\v_{\varepsilon}$ of \eqref{CCBF-add} satisfies
		\begin{align}\label{SV_ab}
		&\|\v_{\varepsilon}(\upeta,\vartheta_{-t}\omega, \v_{\varepsilon}(0))\|^2_{\V} \nonumber\\& \leq e^{-\frac{\varrho \upeta}{8}+\varepsilon^2K_1\int_{-t}^{\upeta-t}|\z(\vartheta_{s}\omega)|^2\d s}\|\v_{\varepsilon}(0)\|^2_{\H}\nonumber\\&\quad+e^{\frac{\varrho}{8}(t-\upeta)}\int_{-t}^{\upeta-t}C e^{\frac{\varrho\xi}{8}+\varepsilon^2K_1\int_{\xi}^{\upeta-t}|\z(\vartheta_{s}\omega)|^2\d s}\ \big(\|\f\|_{\H}^2+ \varepsilon^{j}|\z(\vartheta_{\xi }\omega)|^{j}+ 1\big)\d \xi,
		\end{align}
		where $K_1$ is the constant given by \eqref{K1} and $j=\max\{4,r+1\}$.
	\end{lemma}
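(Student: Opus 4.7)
The plan is to take the $\H$-inner product of the first equation in \eqref{CCBF-add} against $\A\v_\varepsilon$, derive a pointwise-in-time differential inequality for $\|\v_\varepsilon\|_\V^2$ whose Gronwall coefficient is exactly $\frac{\varrho}{8}-\varepsilon^2 K_1|\z(\vartheta_t\omega)|^2$, and then apply Robinson's double-integration trick (as in the proof of Lemma~\ref{d-ab}) together with the weighted $\V$-integral estimate already contained in \eqref{SH_ab}.

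First I would test \eqref{CCBF-add} against $\A\v_\varepsilon$. Splitting $\A\v_\varepsilon=\A(\v_\varepsilon+\varepsilon\z(\vartheta_t\omega)\Phi)-\varepsilon\z(\vartheta_t\omega)\A\Phi$, the 2D periodic identity $(\B(\w),\A\w)=0$ for $\w\in\D(\A)$ (cf.~\eqref{d-ab6}) kills the main convective contribution $\langle \B(\v_\varepsilon+\varepsilon\z\Phi),\A(\v_\varepsilon+\varepsilon\z\Phi)\rangle$, and the identity \eqref{3} applied to $\v_\varepsilon+\varepsilon\z\Phi$ gives the sign-definite lower bound $\langle \mathcal{C}(\v_\varepsilon+\varepsilon\z\Phi),\A(\v_\varepsilon+\varepsilon\z\Phi)\rangle\geq 0$. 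What remains are cross terms paired against $\varepsilon\z\A\Phi$ from both nonlinearities, together with the linear forcing and Ornstein-Uhlenbeck drift pairings $(\f,\A\v_\varepsilon)$, $(\varepsilon\alpha\z\Phi,\A\v_\varepsilon)$ and $(\varepsilon\mu\z\A\Phi,\A\v_\varepsilon)$.

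I would control these remainders by \eqref{b2}, H\"older, Young and \eqref{poin}, absorbing a fraction of $\mu\|\A\v_\varepsilon\|_\H^2$ on the left; the absorption cross-term $\beta\varepsilon\z\langle \mathcal{C}(\v_\varepsilon+\varepsilon\z\Phi),\A\Phi\rangle$ is treated in the spirit of \eqref{H4}. Careful bookkeeping, mirroring the $\H$-level calculation and exploiting the very same choice \eqref{K1} of $K_1$, should deliver
\begin{align*}
\frac{\d}{\d t}\|\v_\varepsilon(t)\|_\V^2 + \Bigl(\frac{\varrho}{8}-\varepsilon^2 K_1|\z(\vartheta_t\omega)|^2\Bigr)\|\v_\varepsilon(t)\|_\V^2 \leq C\bigl(\|\f\|_\H^2 + \varepsilon^j|\z(\vartheta_t\omega)|^j + 1\bigr),
\end{align*}
with $j=\max\{4,r+1\}$. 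Variation of constants between $s$ and $\upeta$ gives $\|\v_\varepsilon(\upeta)\|_\V^2$ in terms of $\|\v_\varepsilon(s)\|_\V^2$ and a weighted source integral; averaging in $s$ over $[0,\upeta]$ (Robinson's double-integration trick, just as in Lemma~\ref{d-ab}) then replaces $\|\v_\varepsilon(s)\|_\V^2$ by exactly the weighted $s$-integral $\int_0^\upeta e^{\int_\upeta^s(\frac{\varrho}{8}-\varepsilon^2K_1|\z|^2)\d r}\|\v_\varepsilon(s)\|_\V^2\d s$ that is bounded by the right-hand side of \eqref{SH_ab}. Substituting this bound and translating $\omega\mapsto\vartheta_{-t}\omega$ yields \eqref{SV_ab}.

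The main obstacle is the constant-matching: one must verify that the differential inequality for $\|\v_\varepsilon\|_\V^2$ carries \emph{exactly} the coefficient $\frac{\varrho}{8}-\varepsilon^2 K_1|\z|^2$ with the same $K_1$ of \eqref{K1} and the same power $j=\max\{4,r+1\}$ that underlie \eqref{SH_ab}, so that a single application of the double-integration trick feeds \eqref{SH_ab} into the pointwise $\V$-estimate without any spurious extra weight or loss of powers of $|\z|$.
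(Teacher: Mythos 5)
Your proposal follows the paper's proof essentially verbatim: test against $\A\v_{\varepsilon}$, rewrite the convective and absorption terms in terms of $\u_{\varepsilon}=\v_{\varepsilon}+\varepsilon\z(\vartheta_t\omega)\Phi$ so that $b(\u_{\varepsilon},\u_{\varepsilon},\A\u_{\varepsilon})=0$ and \eqref{3} give sign-definite contributions, estimate the cross terms paired with $\varepsilon\z(\vartheta_t\omega)\A\Phi$ to arrive at exactly \eqref{V8}, and then combine the double-integration trick with \eqref{SH_ab}. The only execution detail you gloss over is that the cross-term $\beta\langle\mathcal{C}(\u_{\varepsilon}),\varepsilon\z(\vartheta_t\omega)\A\Phi\rangle$ is absorbed (via the 2D embedding of $\V$ into $\wi\L^4$ applied to $|\u_{\varepsilon}|^{\frac{r+1}{2}}$, as in \eqref{V6}) into the good term $\|\nabla|\u_{\varepsilon}|^{\frac{r+1}{2}}\|_{\H}^2$ rather than into $\|\u_{\varepsilon}\|_{\wi\L^{r+1}}^{r+1}$ as in \eqref{H4}, but this does not change the strategy.
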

	\begin{proof}
		Taking the inner product with $\A\v_{\varepsilon}(\cdot)$ to the first equation in \eqref{CCBF-add}, we have
		\begin{align}\label{V1}
		\frac{1}{2}\frac{\d}{\d t}\|\v_{\varepsilon}(t)\|^2_{\V} = & -\mu\|\A\v_{\varepsilon}(t)\|^2_{\V} -b(\v_{\varepsilon}(t)+\varepsilon\z(\vartheta_t\omega)\Phi,\v_{\varepsilon}(t)+\varepsilon\z(\vartheta_t\omega)\Phi,\A\v_{\varepsilon}(t)) \nonumber\\&-\beta\big\langle\mathcal{C}(\v_{\varepsilon}(t)+\varepsilon\z(\vartheta_t\omega)\Phi),\A\v_{\varepsilon}(t)\big\rangle+\big(\alpha\varepsilon\z(\vartheta_t\omega)\Phi, \A\v_{\varepsilon}(t)\big) +\big(\f,\A\v_{\varepsilon}(t)\big)\nonumber\\&-\big(\varepsilon\mu\z(\vartheta_t\omega)\A\Phi,\A\v_{\varepsilon}(t)\big)\nonumber\\
		=&-\mu\|\A\v_{\varepsilon}(t)\|^2_{\V} +b(\v_{\varepsilon}(t)+\varepsilon\z(\vartheta_t\omega)\Phi,\v_{\varepsilon}(t)+\varepsilon\z(\vartheta_t\omega)\Phi,\varepsilon\z(\vartheta_t\omega)\A\Phi)\nonumber\\&-b(\u_{\varepsilon}(t),\u_{\varepsilon}(t),\A\u_{\varepsilon}(t))-\beta\big\langle\mathcal{C}\big(\u_{\varepsilon}(t)\big),\A\u_{\varepsilon}(t)\big\rangle+\beta\big\langle\mathcal{C}\big(\u_{\varepsilon}(t)\big),\varepsilon\z(\vartheta_t\omega)\A\Phi\big\rangle \nonumber\\&+\big(\alpha\varepsilon\z(\vartheta_t\omega)\Phi, \A\v_{\varepsilon}(t)\big) +\big(\f,\A\v_{\varepsilon}(t)\big)-\big(\varepsilon\mu\z(\vartheta_t\omega)\A\Phi,\A\v_{\varepsilon}(t)\big),
		\end{align}
		for a.e. $t\in[0,\tau]$. Applying H\"older's and Young's inequalities, we get 
		\begin{align}\label{V2}
		&	\big|\big(\varepsilon\alpha\z(\vartheta_t\omega)\Phi, \A\v_{\varepsilon}\big)\big|+\big|\big(\f,\A\v_{\varepsilon}\big)\big|+\big|\big(\varepsilon\mu\z(\vartheta_t\omega)\A\Phi,\A\v_{\varepsilon}\big)\big|\nonumber\\&\leq C\|\f\|_{\H}^2+ \varepsilon^2C |\z(\vartheta_t\omega)|^2 + \frac{\mu}{8} \|\A\v_{\varepsilon}\|_{\H}^2.
		\end{align}
		In 2D periodic domains, we infer that 
		\begin{align}\label{V3}
		b(\u_{\varepsilon},\u_{\varepsilon},\A\u_{\varepsilon})=0,
		\end{align}
		and
		\begin{align}\label{V4}
		\big\langle\mathcal{C}\big(\u_{\varepsilon}\big),\A\u_{\varepsilon}\big\rangle&=\int_{\mathbb{T}^2}|\nabla\u_{\varepsilon}(x)|^2|\u_{\varepsilon}(x)|^{r-1}\d x+4\left[\frac{r-1}{(r+1)^2}\right]\int_{\mathbb{T}^2}|\nabla|\u_{\varepsilon}(x)|^{\frac{r+1}{2}}|^2\d x.
		\end{align}
		Making use of \eqref{poin}, \eqref{poin_1} and \eqref{b1}, we estimate  
		\begin{align}\label{V5}
		&	|b(\v_{\varepsilon}+\varepsilon\z(\vartheta_t\omega)\Phi,\v_{\varepsilon}+\varepsilon\z(\vartheta_t\omega)\Phi,\varepsilon\z(\vartheta_t\omega)\A\Phi)|\nonumber\\&\leq|b(\v_{\varepsilon},\v_{\varepsilon},\varepsilon\z(\vartheta_t\omega)\A\Phi)|+|b(\varepsilon\z(\vartheta_t\omega)\Phi,\v_{\varepsilon},\varepsilon\z(\vartheta_t\omega)\A\Phi)|+|b(\v_{\varepsilon},\varepsilon\z(\vartheta_t\omega)\Phi,\varepsilon\z(\vartheta_t\omega)\A\Phi)|\nonumber\\&\quad+|b(\varepsilon\z(\vartheta_t\omega)\Phi,\varepsilon\z(\vartheta_t\omega)\Phi,\varepsilon\z(\vartheta_t\omega)\A\Phi)|\nonumber\\&\leq \frac{c_2}{\lambda_1}\|\v_{\varepsilon}\|_{\V}\|\A\v_{\varepsilon}\|_{\H}\|\varepsilon\z(\vartheta_{t}\omega)\A\Phi\|_{\H}+\varepsilon^2C|\z(\vartheta_{t}\omega)|^2\|\A\v_{\varepsilon}\|_{\H}+\varepsilon^2C|\z(\vartheta_{t}\omega)|^2\|\v_{\varepsilon}\|_{\V}\nonumber\\&\quad+\varepsilon^3C|\z(\vartheta_{t}\omega)|^3\nonumber\\&\leq\frac{\mu}{8}\|\A\v_{\varepsilon}\|^2_{\H}+\frac{4\varepsilon^2 c_2^2\|\A\Phi\|^2_{\H}}{\mu\lambda_1^2}|\z(\vartheta_{t}\omega)|^2\|\v_{\varepsilon}\|^2_{\V} +\frac{K_2}{2}\|\v_{\varepsilon}\|^2_{\V}+\varepsilon^4C|\z(\vartheta_{t}\omega)|^4+\varepsilon^3C|\z(\vartheta_{t}\omega)|^3+C\nonumber\\&\leq\frac{\mu}{8}\|\A\v_{\varepsilon}\|^2_{\H}+\frac{\varepsilon^2 K_1}{2}|\z(\vartheta_{t}\omega)|^2\|\v_{\varepsilon}\|^2_{\V} +\frac{K_2}{2}\|\v_{\varepsilon}\|^2_{\V}+\varepsilon^4C|\z(\vartheta_{t}\omega)|^4+\varepsilon^3C|\z(\vartheta_{t}\omega)|^3+C,
		\end{align}
		where $K_1$ is given by \eqref{K1} and $K_2$ is taken as
		\begin{align}\label{K2}
		K_2:= \mu\lambda_1-\frac{\varrho}{8} \qquad \text{ or }\qquad \frac{\mu\lambda_1}{2}-\frac{K_2}{2}= \frac{\varrho}{16}.
		\end{align}
		Applying H\"older's inequality, Sobolev embedding and Young's inequality, we have
		\begin{align}
		\big\langle\mathcal{C}\big(\u_{\varepsilon}\big),\varepsilon\z(\vartheta_t\omega)\A\Phi\big\rangle&\leq\|\u_{\varepsilon}\|^r_{\L^{2r}}|\varepsilon\z(\vartheta_t\omega)|\|\A\Phi\|_{\H}\nonumber\\&\leq C\|\u_{\varepsilon}\|^r_{\L^{2(r+1)}}|\varepsilon\z(\vartheta_t\omega)|\|\A\Phi\|_{\H}\nonumber\\
		&=C\bigg(\int_{\mathbb{T}^2}|\u_{\varepsilon}(x)|^{2(r+1)}\d x\bigg)^{\frac{r}{2(r+1)}}|\varepsilon\z(\vartheta_t\omega)|\|\A\Phi\|_{\H}\nonumber\\
		&=C\bigg(\int_{\mathbb{T}^2}\big(|\u_{\varepsilon}(x)|^{\frac{r+1}{2}}\big)^4\d x\bigg)^{\frac{r}{2(r+1)}}|\varepsilon\z(\vartheta_t\omega)|\|\A\Phi\|_{\H}\nonumber\\
		&=C\||\u_{\varepsilon}|^{\frac{r+1}{2}}\|^{\frac{2r}{r+1}}_{\wi \L^4}|\varepsilon\z(\vartheta_t\omega)|\|\A\Phi\|_{\H}\nonumber\\&\leq C\||\u_{\varepsilon}|^{\frac{r+1}{2}}\|^{\frac{2r}{r+1}}_{\V}|\varepsilon\z(\vartheta_t\omega)|\|\A\Phi\|_{\H}\nonumber\\
		&\leq C\bigg(\int_{\mathbb{T}^2}|\nabla|\u_{\varepsilon}(x)|^{\frac{r+1}{2}}|^2\d x\bigg)^{\frac{r}{(r+1)}}|\varepsilon\z(\vartheta_t\omega)|\|\A\Phi\|_{\H}\nonumber\\
		&\leq 2\left[\frac{r-1}{(r+1)^2}\right]\int_{\mathcal{O}}|\nabla|\u_{\varepsilon}(x)|^{\frac{r+1}{2}}|^2\d x + \varepsilon^{r+1}C|\z(\vartheta_t\omega)|^{r+1},\  \text{ for } \ r>1,\label{V6}\\	
		\big\langle\mathcal{C}\big(\u_{\varepsilon}\big),\varepsilon\z(\vartheta_t\omega)\A\Phi\big\rangle&\leq \frac{1}{2}\int_{\mathbb{T}^2} |\nabla\u_{\varepsilon}(x)|^2 \d x + \varepsilon^{2}C|\z(\vartheta_t\omega)|^{2}, \ \text{ for } \ r=1.\label{V7}
		\end{align}
		Combining \eqref{V2}-\eqref{V7} and using in \eqref{V1}, we deduce that 
		\begin{align*}
		&\frac{1}{2}\frac{\d}{\d t}\|\v_{\varepsilon}(t)\|^2_{\V}+\left(\frac{\mu}{2}+\frac{\mu}{4}\right)\|\A\v_{\varepsilon}(t)\|^2_{\H}+\frac{\beta}{2}\int_{\mathbb{T}^2}|\nabla\u_{\varepsilon}(t,x)|^2|\u_{\varepsilon}(t,x)|^{r-1}\d x\nonumber\\&\quad+2\beta\left[\frac{r-1}{(r+1)^2}\right]\int_{\mathbb{T}^2}|\nabla|\u_{\varepsilon}(t,x)|^{\frac{r+1}{2}}|^2\d x\nonumber\\ &\leq \frac{\varepsilon^2 K_1}{2}|\z(\vartheta_{t}\omega)|^2\|\v_{\varepsilon}(t)\|^2_{\V} +\frac{K_2}{2}\|\v_{\varepsilon}(t)\|^2_{\V}+C\|\f\|_{\H}^2+ \varepsilon^2C |\z(\vartheta_t\omega)|^2+\varepsilon^3C|\z(\vartheta_{t}\omega)|^3\nonumber\\&\qquad+\varepsilon^4C|\z(\vartheta_{t}\omega)|^4+ \varepsilon^{r+1}C|\z(\vartheta_t\omega)|^{r+1}+C,
		\end{align*}
		for a.e. $t\in[0,\tau]$. Using \eqref{poin_1} in above inequality, we have
		\begin{align*}
		&\frac{1}{2}\frac{\d}{\d t}\|\v_{\varepsilon}(t)\|^2_{\V}+\frac{\mu}{4}\|\A\v_{\varepsilon}(t)\|^2_{\H}+\left(\frac{\mu\lambda_1}{2}-\frac{K_2}{2}-\frac{\varepsilon^2 K_1}{2}|\z(\vartheta_{t}\omega)|^2\right)\|\v_{\varepsilon}(t)\|^2_{\V}\nonumber\\ &\leq  C\|\f\|_{\H}^2+  \varepsilon^{j}C|\z(\vartheta_t\omega)|^{j}+C,	
		\end{align*}
		From the definition of $K_2$ in \eqref{K2}, it is clear that 
		\begin{align}\label{V8}
		&\frac{\d}{\d t}\|\v_{\varepsilon}(t)\|^2_{\V}+\frac{\mu}{2}\|\A\v_{\varepsilon}(t)\|^2_{\H}+\left(\frac{\varrho}{8}-\varepsilon^2 K_1|\z(\vartheta_{t}\omega)|^2\right)\|\v_{\varepsilon}(t)\|^2_{\V}\nonumber\\ &\leq  C\|\f\|_{\H}^2+ \varepsilon^{j}C|\z(\vartheta_t\omega)|^{j}+C,
		\end{align}
		where $j=\max\{4,r+1\}$, for a.e. $t\in[0,\tau]$. Variation of constants formula in  \eqref{V8} gives 
		\begin{align*}
		&	\|\v_{\varepsilon}(t)\|^2_{\V} +\frac{\mu}{2}\int_{0}^{t} e^{\int_{t}^{\zeta}\left(\frac{\varrho}{8} - \varepsilon^2 K_1 |\z(\vartheta_{s}\omega)|^2\right)\d s}\|\A\v_{\varepsilon}(\zeta)\|^2_{\H}\d \zeta\nonumber\\& \leq e^{\int_{t}^{\uprho}\left(\frac{\varrho}{8} - \varepsilon^2 K_1 |\z(\vartheta_{s}\omega)|^2\right)\d s} \|\v_{\varepsilon}(\uprho)\|^2_{\V}+ \int_{0}^{t} Ce^{\int_{t}^{\zeta}\left(\frac{\varrho}{8} - \varepsilon^2 K_1 |\z(\vartheta_{s}\omega)|^2\right)\d s}\big(\|\f\|_{\H}^2+ \varepsilon^{j}|\z(\vartheta_{\zeta}\omega)|^{j}+ 1\big)\d \zeta,
		\end{align*}
		for any $t\geq\uprho\geq0$.	Hence, for any $t\geq\upeta>\uprho>0,$ we get 
		\begin{align*}
		\|\v_{\varepsilon}(\upeta,\vartheta_{-t}\omega, \v_{\varepsilon}(0))\|^2_{\V} & \leq e^{\int_{\upeta}^{\uprho}\left(\frac{\varrho}{8} - \varepsilon^2 K_1 |\z(\vartheta_{s-t}\omega)|^2\right)\d s} \|\v_{\varepsilon}(\uprho,\vartheta_{-t}\omega, \v_{\varepsilon}(0))\|^2_{\V}\nonumber\\&\quad+ \int_{0}^{\upeta} Ce^{\int_{\upeta}^{\xi}\left(\frac{\varrho}{8} - \varepsilon^2 K_1 |\z(\vartheta_{s-t}\omega)|^2\right)\d s}\big(\|\f\|_{\H}^2+ \varepsilon^{j}|\z(\vartheta_{\xi-t}\omega)|^{j}+ 1\big)\d \xi.
		\end{align*}
		For $t\geq\upeta>\uprho>\upeta-1\geq0$, integrating the  above inequality with respect to $\uprho$ over $(\upeta-1, \upeta)$, we get
		\begin{align}\label{V11}
		\|\v_{\varepsilon}(\upeta,\vartheta_{-t}\omega, \v_{\varepsilon}(0))\|^2_{\V} & \leq \int_{\upeta-1}^{\upeta}e^{\int_{\upeta}^{\uprho}\left(\frac{\varrho}{8} - \varepsilon^2 K_1 |\z(\vartheta_{s-t}\omega)|^2\right)\d s} \|\v_{\varepsilon}(\uprho,\vartheta_{-t}\omega, \v_{\varepsilon}(0))\|^2_{\V}\nonumber\\&\quad+ \int_{0}^{\upeta} Ce^{\int_{\upeta}^{\xi}\left(\frac{\varrho}{8} - \varepsilon^2 K_1 |\z(\vartheta_{s-t}\omega)|^2\right)\d s}\big(\|\f\|_{\H}^2+ \varepsilon^{j}|\z(\vartheta_{\xi-t}\omega)|^{j}+ 1\big)\d \xi.
		\end{align}
		Making use of \eqref{SH_ab} in \eqref{V11}, we obtain \eqref{SV_ab}, which completes the proof.
	\end{proof}
	\subsection{Perturbation radius of the singleton attractor under additive noise}
	Let us now consider the difference $\w_{\varepsilon}(\cdot)=\v_{\varepsilon}(\cdot)-\u(\cdot)$, where $\v_{\varepsilon}(\cdot)$ and $\u(\cdot)$ are the solutions of the systems \eqref{CCBF-add} and \eqref{D-CBF}, respectively. It is obvious that $\w_{\varepsilon}(\cdot)$ satisfies 
	\begin{equation}\label{Diff-CBF_add}
	\left\{
	\begin{aligned}
	\frac{\d\w_{\varepsilon}}{\d t}&=-\mu \A\w_{\varepsilon}-\B(\u_{\varepsilon})+\B(\u) -\beta\mathcal{C}(\u_{\varepsilon})+\beta\mathcal{C}(\u)+ \varepsilon\alpha\z(\vartheta_t\omega)\Phi-\varepsilon\mu\z(\vartheta_t\omega)\A \Phi , \\ 
	\w_{\varepsilon}(0)&=-\varepsilon\z(\omega)\Phi(x),
	\end{aligned}
	\right.
	\end{equation}
	in $\V'$. Due to technical difficulties, we restrict ourselves to $1\leq r\leq 2$. 
	\begin{lemma}\label{PertRad-add}
		For $n=2$ and any $1\leq r\leq 2$, let $\f\in \H$. Assume that $\v_{\varepsilon}(\cdot)$ and $\u(\cdot)$ are the solutions of \eqref{CCBF-add} and \eqref{D-CBF} corresponding to the initial data $\v_{\varepsilon}(0)\in D\in \mathfrak{D}$ and $\x\in\H$, respectively. Then there exists a random variable $\upgamma(\omega)$ depending only on $D$ and $\|\x\|_{\H}$ such that the $\w_{\varepsilon}(\cdot)$ satisfies
		\begin{align}\label{Diff_rad}
		\limsup_{t\to \infty} \|\w_{\varepsilon}(t,\vartheta_{-t}\omega,\w_{\varepsilon}(0))\|^2_{\H}\ \leq\ \varepsilon^{\frac{r+1}{r}} \upgamma(\omega), \  \text{ for all }  \ \varepsilon\in(0,1].
		\end{align}
	\end{lemma}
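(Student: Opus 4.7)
The plan is to test the difference equation \eqref{Diff-CBF_add} against $\w_{\varepsilon}$ in $\H$, use the antisymmetry of $b$ and the monotonicity of $\mathcal{C}$ to isolate the error terms produced by $\varepsilon\z(\vartheta_{t}\omega)\Phi$, and then extract the sharp rate $\varepsilon^{(r+1)/r}$ from a carefully chosen H\"older--Young pairing against the absorption nonlinearity. Writing $\u_{\varepsilon}-\u=\w_{\varepsilon}+\varepsilon\z(\vartheta_{t}\omega)\Phi$ and using $b(\u,\w_{\varepsilon},\w_{\varepsilon})=0$, the bilinear difference splits as $\langle\B(\u_{\varepsilon})-\B(\u),\w_{\varepsilon}\rangle=b(\w_{\varepsilon},\u_{\varepsilon},\w_{\varepsilon})+\varepsilon\z(\vartheta_{t}\omega)[b(\Phi,\u_{\varepsilon},\w_{\varepsilon})+b(\u,\Phi,\w_{\varepsilon})]$, and the damping difference splits as $\langle\mathcal{C}(\u_{\varepsilon})-\mathcal{C}(\u),\w_{\varepsilon}\rangle=\langle\mathcal{C}(\u_{\varepsilon})-\mathcal{C}(\u),\u_{\varepsilon}-\u\rangle-\varepsilon\z(\vartheta_{t}\omega)\langle\mathcal{C}(\u_{\varepsilon})-\mathcal{C}(\u),\Phi\rangle$. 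By \eqref{MO_c} combined with \eqref{a215} specialized to $1\leq r\leq 2$ (so the constant $2^{r-2}$ becomes $1$), the monotonicity summand is bounded below by $\tfrac12\|\u_{\varepsilon}-\u\|^{r+1}_{\wi\L^{r+1}}$, which will be used for cancellation.

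The decisive estimate is the $\mathcal{C}$--$\Phi$ cross term. Using the pointwise bound $\bigl||a|^{r-1}a-|b|^{r-1}b\bigr|\leq r(|a|^{r-1}+|b|^{r-1})|a-b|$, H\"older with exponents $\bigl(\tfrac{r+1}{r-1},r+1,r+1\bigr)$, and Young with the conjugate pair $\bigl(r+1,\tfrac{r+1}{r}\bigr)$ yields
\begin{align*}
\varepsilon|\z(\vartheta_{t}\omega)|\beta\bigl|\langle\mathcal{C}(\u_{\varepsilon})-\mathcal{C}(\u),\Phi\rangle\bigr|
&\leq \tfrac{\beta}{4}\|\u_{\varepsilon}-\u\|^{r+1}_{\wi\L^{r+1}} \\
&\quad + C\varepsilon^{(r+1)/r}|\z(\vartheta_{t}\omega)|^{(r+1)/r}\bigl(1+\|\u_{\varepsilon}\|^{(r^{2}-1)/r}_{\wi\L^{r+1}}+\|\u\|^{(r^{2}-1)/r}_{\wi\L^{r+1}}\bigr),
\end{align*}
and the first summand is absorbed by the monotonicity bound. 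For the bilinear noise pieces I will apply \eqref{b1} and Young to produce $\tfrac{\mu}{4}\|\w_{\varepsilon}\|^{2}_{\V}+C\varepsilon^{2}|\z|^{2}(1+\|\u_{\varepsilon}\|^{2}_{\V}+\|\u\|^{2}_{\V})$; the direct forcing correction $\varepsilon\z(\alpha\Phi-\mu\A\Phi,\w_{\varepsilon})$ contributes $C\varepsilon^{2}|\z|^{2}$ plus a small multiple of $\|\w_{\varepsilon}\|^{2}_{\H}$; and $|b(\w_{\varepsilon},\u_{\varepsilon},\w_{\varepsilon})|$ is handled as in Lemma \ref{Uniqueness} via \eqref{b1}, producing $\tfrac{c_{1}^{2}}{\mu}\|\u_{\varepsilon}\|^{2}_{\V}\|\w_{\varepsilon}\|^{2}_{\H}+\tfrac{\mu}{4}\|\w_{\varepsilon}\|^{2}_{\V}$.

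Collecting, using $\varepsilon\in(0,1]$ and $(r+1)/r\leq 2$ to dominate all $\varepsilon^{2}$ pieces by $\varepsilon^{(r+1)/r}$, and applying \eqref{poin}, I arrive at a differential inequality of the form
\begin{align*}
\frac{\d}{\d t}\|\w_{\varepsilon}(t)\|^{2}_{\H}+\Bigl(\mu\lambda_{1}-\tfrac{2c_{1}^{2}}{\mu}\|\u_{\varepsilon}(t)\|^{2}_{\V}\Bigr)\|\w_{\varepsilon}(t)\|^{2}_{\H} \leq C\varepsilon^{(r+1)/r}|\z(\vartheta_{t}\omega)|^{(r+1)/r}\mathcal{K}(t),
\end{align*}
with $\mathcal{K}(t)$ polynomial in $\|\u_{\varepsilon}(t)\|_{\V}$ and $\|\u(t)\|_{\V}$ (the $\wi\L^{r+1}$-norms are absorbed into $\V$-norms by the 2D Sobolev embedding). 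Applying Lemma \ref{d-ab} to $\u$ and the absorbing estimates \eqref{SH_ab}--\eqref{SV_ab} to $\u_{\varepsilon}$ under the Grashof assumption \eqref{C_1}, the damping coefficient is bounded below by some positive constant (on the order of $\varrho/4$) on asymptotically large time windows, just as in Lemma \ref{Uniqueness}. Variation of constants followed by the substitution $\omega\to\vartheta_{-t}\omega$ converts the result into a pullback expression: the initial contribution $e^{-\delta t}\|\w_{\varepsilon}(0,\vartheta_{-t}\omega)\|^{2}_{\H}$ vanishes because $D\in\mathfrak{D}$ is tempered and \eqref{Z3}, \eqref{Z5} hold, while the source integral is controlled by Lemma \ref{z_esti} together with \eqref{Z1}--\eqref{Z2}, producing $\varepsilon^{(r+1)/r}\upgamma(\omega)$ with a tempered, almost surely finite random radius.

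The principal obstacle is the $\mathcal{C}$--$\Phi$ cross term. The Young pairing $\bigl(r+1,\tfrac{r+1}{r}\bigr)$ is forced by the monotonicity cancellation of $\|\u_{\varepsilon}-\u\|^{r+1}_{\wi\L^{r+1}}$, and it is precisely this pairing that dictates the exponent $\varepsilon^{(r+1)/r}$; any other choice either fails to cancel the absorbed piece or degrades the sharp rate. The restriction $1\leq r\leq 2$ enters both to obtain the clean constant in the monotonicity bound from \eqref{a215} and to keep the exponent $(r^{2}-1)/r$ of the auxiliary norms mild enough for the uniform absorbing estimates \eqref{SH_ab}--\eqref{SV_ab} to render the right-hand side integrable in the pullback limit; for larger $r$ these controls degrade, explaining why the additive-noise analysis stops at $r=2$ and the remaining range is attacked via multiplicative noise in later sections.
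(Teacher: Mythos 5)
Your overall route coincides with the paper's: the same splitting of the damping difference via $\u_{\varepsilon}-\u=\w_{\varepsilon}+\varepsilon\z(\vartheta_{t}\omega)\Phi$, the same use of \eqref{MO_c}--\eqref{a215} to produce the coercive term $\|\u_{\varepsilon}-\u\|^{r+1}_{\wi\L^{r+1}}$, the same H\"older--Young pairing $\bigl(r+1,\tfrac{r+1}{r}\bigr)$ on the $\mathcal{C}$--$\Phi$ cross term (which is exactly the paper's \eqref{DA5} and correctly yields the exponent $\varepsilon^{(r+1)/r}$ with the auxiliary power $\tfrac{(r+1)(r-1)}{r}\leq 2$), and the same pullback endgame via \eqref{SH_ab}--\eqref{SV_ab}, Lemma \ref{z_esti}, temperedness and ergodicity. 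However, there is one genuine gap, in the convective term.

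You decompose $\langle\B(\u_{\varepsilon})-\B(\u),\w_{\varepsilon}\rangle$ so that the quadratic-in-$\w_{\varepsilon}$ piece is $b(\w_{\varepsilon},\u_{\varepsilon},\w_{\varepsilon})$, and after Young you arrive at the Gronwall coefficient $\mu\lambda_{1}-\tfrac{2c_{1}^{2}}{\mu}\|\u_{\varepsilon}(t)\|^{2}_{\V}$, which you claim is bounded below by a constant of order $\varrho/4$ using \eqref{SH_ab}--\eqref{SV_ab}. This does not follow. The condition \eqref{C_1} is calibrated so that $\mu\lambda_{1}-\tfrac{c_{1}^{2}}{\mu}\|\u(t)\|^{2}_{\V}\geq\tfrac{\varrho}{2}$ for the \emph{deterministic} solution, whose $\V$-absorbing radius is exactly $\tfrac{1}{\mu}\bigl[1+\tfrac{1}{\mu\lambda_1}+\tfrac{1}{\mu^2\lambda_1^2}\bigr]\|\f\|^{2}_{\H}$ (Lemma \ref{d-ab}). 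The random absorbing bound \eqref{SV_ab} for $\v_{\varepsilon}$ (hence for $\u_{\varepsilon}$) is strictly weaker: it carries generic constants, an additive $+1$ in the source, and random exponential factors $e^{\varepsilon^{2}K_1\int|\z|^{2}}$, so its asymptotic radius is a random variable that is not dominated by the deterministic threshold even as $\varepsilon\to 0$, let alone for fixed $\varepsilon\in(0,1]$. Consequently positivity of your damping coefficient cannot be deduced from \eqref{C_1}; the factor $2$ (coming from your $\tfrac{\mu}{4}$ Young split instead of $\tfrac{\mu}{2}$) makes this worse. The paper avoids the problem by writing the decomposition as $b(\u_{\varepsilon},\varepsilon\z(\vartheta_t\omega)\Phi,\w_{\varepsilon})+b(\w_{\varepsilon},\u,\w_{\varepsilon})+b(\varepsilon\z(\vartheta_t\omega)\Phi,\u,\w_{\varepsilon})$, so that only $\|\u(t)\|^{2}_{\V}$ enters the Gronwall coefficient (with the correct constant $\tfrac{c_1^2}{\mu}$), while $\|\u_{\varepsilon}\|^{2}_{\V}$ appears only in source terms premultiplied by $\varepsilon^{2}|\z|^{2}$, where \eqref{SV_ab} together with the ergodic choice of $\alpha$ suffices. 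Your argument can be repaired by further splitting $b(\w_{\varepsilon},\u_{\varepsilon},\w_{\varepsilon})=b(\w_{\varepsilon},\u,\w_{\varepsilon})+\varepsilon\z(\vartheta_t\omega)\,b(\w_{\varepsilon},\Phi,\w_{\varepsilon})$ (using $b(\w_{\varepsilon},\w_{\varepsilon},\w_{\varepsilon})=0$) and tightening the Young constant, but as written the step fails.
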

	\begin{proof}
		Taking the inner product with $\w_{\varepsilon}(\cdot)$ to the first equation in \eqref{Diff-CBF_add}, we find 
		\begin{align}\label{DA1}
		\frac{1}{2}\frac{\d}{\d t} \|\w_{\varepsilon}(t)\|^2_{\H} &=- \mu\|\w_{\varepsilon}(t)\|^2_{\V}-b(\u_{\varepsilon}(t),\u_{\varepsilon}(t),\w_{\varepsilon}(t))+b(\u(t),\u(t),\w_{\varepsilon}(t))\nonumber\\&\quad-\beta\left\langle\mathcal{C}(\u_{\varepsilon}(t))-\mathcal{C}(\u(t)),\w_{\varepsilon}(t)\right\rangle+\left\langle\varepsilon\alpha\z(\vartheta_t\omega)\Phi-\varepsilon\mu\z(\vartheta_t\omega)\A \Phi,\w_{\varepsilon}(t)\right\rangle\nonumber\\&=- \mu\|\w_{\varepsilon}(t)\|^2_{\V}-b(\u_{\varepsilon}(t),\u_{\varepsilon}(t),\w_{\varepsilon}(t))+b(\u(t),\u(t),\w_{\varepsilon}(t))\nonumber\\&\quad-\beta\left\langle\mathcal{C}(\u_{\varepsilon}(t))-\mathcal{C}(\u(t)),\u_{\varepsilon}(t)-\u(t)\right\rangle+\beta\left\langle\mathcal{C}(\u_{\varepsilon}(t))-\mathcal{C}(\u(t)),\varepsilon\z(\vartheta_{t}\omega)\Phi\right\rangle\nonumber\\&\quad+\left\langle\varepsilon\alpha\z(\vartheta_t\omega)\Phi-\varepsilon\mu\z(\vartheta_t\omega)\A \Phi,\w_{\varepsilon}(t)\right\rangle,
		\end{align}
		for a.e. $t\in[0,\tau]$. Applying H\"older's and Young's inequalities, and \eqref{poin}, we have
		\begin{align}\label{DA2}
		|\left\langle\varepsilon\alpha\z(\vartheta_t\omega)\Phi-\varepsilon\mu\z(\vartheta_t\omega)\A \Phi,\w_{\varepsilon}\right\rangle|&\leq \varepsilon^2C|\z(\vartheta_{t}\omega)|^2+\frac{\varrho}{24}\|\w_{\varepsilon}\|^2_{\H}\nonumber\\&\leq \varepsilon^2C|\z(\vartheta_{t}\omega)|^2+\frac{\varrho}{24\lambda_1}\|\w_{\varepsilon}\|^2_{\V}.
		\end{align}
		Making use of \eqref{b1} and then estimating the terms carefully by Young's inequality, we obtain 
		\begin{align}\label{DA3}
		&	|b(\u_{\varepsilon},\u_{\varepsilon},\w_{\varepsilon})-b(\u,\u,\w_{\varepsilon})|\nonumber\\&=|b(\u_{\varepsilon},\varepsilon\z(\vartheta_{t}\omega)\Phi,\w_{\varepsilon})+b(\w_{\varepsilon},\u,\w_{\varepsilon})+b(\varepsilon\z(\vartheta_{t}\omega)\Phi,\u,\w_{\varepsilon})|\nonumber\\&\leq \varepsilon c_1|\z(\vartheta_{t}\omega)|\|\u_{\varepsilon}\|^{1/2}_{\H}\|\u_{\varepsilon}\|^{1/2}_{\V}\|\Phi\|_{\V}\|\w_{\varepsilon}\|^{1/2}_{\H}\|\w_{\varepsilon}\|^{1/2}_{\V}+c_1\|\w_{\varepsilon}\|_{\H}\|\w_{\varepsilon}\|_{\V}\|\u\|_{\V}\nonumber\\&\quad+\varepsilon c_1|\z(\vartheta_{t}\omega)|\|\Phi\|^{1/2}_{\H}\|\Phi\|^{1/2}_{\V}\|\u\|_{\V}\|\w_{\varepsilon}\|^{1/2}_{\H}\|\w_{\varepsilon}\|^{1/2}_{\V}\nonumber\\& \leq \left(\varepsilon^2C|\z(\vartheta_{t}\omega)|^2\|\u_{\varepsilon}\|^2_{\V} + \frac{\varrho}{24\lambda_1}\|\w_{\varepsilon}\|^2_{\V}\right)+ \left(\frac{c_1^2}{2\mu}\|\w_{\varepsilon}\|^2_{\H}\|\u\|^2_{\V}+\frac{\mu}{2}\|\w_{\varepsilon}\|^2_{\V}\right)\nonumber\\&\quad+\left(\varepsilon^2C|\z(\vartheta_{t}\omega)|^2\|\u\|^2_{\V} + \frac{\varrho}{24\lambda_1}\|\w_{\varepsilon}\|^2_{\V}\right)\nonumber\\&=\varepsilon^2C|\z(\vartheta_{t}\omega)|^2\left(\|\u_{\varepsilon}\|^2_{\V}+\|\u\|^2_{\V}\right) +\frac{c_1^2}{2\mu}\|\w_{\varepsilon}\|^2_{\H}\|\u\|^2_{\V}+\left(\frac{\mu}{2}+\frac{\varrho}{12\lambda_1}\right)\|\w_{\varepsilon}\|^2_{\V}.
		\end{align}
		By \eqref{MO_c} and \eqref{a215}, one can easily deduce that 
		\begin{align}\label{DA4}
		\frac{\beta}{2^{r-1}}\|\u_{\varepsilon}-\u\|_{\wi\L^{r+1}}^{r+1}\leq 	\beta\left\langle\mathcal{C}(\u_{\varepsilon})-\mathcal{C}(\u),\u_{\varepsilon}-\u\right\rangle.
		\end{align} 
		For $r\in[1,2]$, using H\"older's and Young's inequalities, and Sobolev embedding, we have
		\begin{align}\label{DA5}
		&\beta|\left\langle\mathcal{C}(\u_{\varepsilon})-\mathcal{C}(\u),\varepsilon\z(\vartheta_{t}\omega)\Phi\right\rangle|\nonumber\\&\leq \beta r |\varepsilon\z(\vartheta_{t}\omega)|\left(\|\u_{\varepsilon}\|_{\wi \L^{r+1}}+\|\u\|_{\wi \L^{r+1}}\right)^{r-1}\|\u_{\varepsilon}-\u\|_{\wi\L^{r+1}}\|\Phi\|_{\wi \L^{r+1}}\nonumber\\
		&\leq \frac{\beta}{2^r}\|\u_{\varepsilon}-\u\|_{\wi\L^{r+1}}^{r+1}+\varepsilon^{\frac{r+1}{r} }C |\z(\vartheta_{t}\omega)|^{\frac{r+1}{r} }\left\{\|\u_{\varepsilon}\|^{\frac{(r+1)(r-1)}{r} }_{\V}+\|\u\|^{\frac{(r+1)(r-1)}{r} }_{\V}\right\}\nonumber\\
		&\leq \frac{\beta}{2^r}\|\u_{\varepsilon}-\u\|_{\wi\L^{r+1}}^{r+1}+\varepsilon^{\frac{r+1}{r} }C |\z(\vartheta_{t}\omega)|^{\frac{r+1}{r} }\left\{\|\u_{\varepsilon}\|^{2 }_{\V}+\|\u\|^{2 }_{\V}+1\right\}.
		\end{align}
		Combining \eqref{DA2}-\eqref{DA5} and using in \eqref{DA1}, we deduce that
		\begin{align}\label{DA6}
		&\frac{1}{2} \frac{\d}{\d t} \|\w_{\varepsilon}(t)\|^2_{\H} + \left(\frac{\mu}{2}-\frac{\varrho}{8\lambda_1}\right)\|\w_{\varepsilon}(t)\|^2_{\V}-\frac{c_1^2}{2\mu}\|\w_{\varepsilon}(t)\|^2_{\H}\|\u(t)\|^2_{\V}+\frac{\beta}{2^r}\|\u_{\varepsilon}(t)-\u(t)\|_{\wi\L^{r+1}}^{r+1}	\nonumber\\&\leq\left(\varepsilon^2C|\z(\vartheta_{t}\omega)|^2+\varepsilon^{\frac{r+1}{r} }C |\z(\vartheta_{t}\omega)|^{\frac{r+1}{r} }\right)\left(\|\u_{\varepsilon}(t)\|^2_{\V}+\|\u(t)\|^2_{\V}+1\right),
		\end{align}
		for a.e. $t\in[0,\tau]$. Since \eqref{C_2} holds, using \eqref{poin} and \eqref{DA6}, we get
		\begin{align}\label{DA7}
		&\frac{\d}{\d t} \|\w_{\varepsilon}(t)\|^2_{\H} + \left(\mu\lambda_1-\frac{\varrho}{4}-\frac{c_1^2}{\mu}\|\u(t)\|^2_{\V}\right)\|\w_{\varepsilon}(t)\|^2_{\H}\nonumber\\&\leq \frac{\d}{\d t} \|\w_{\varepsilon}(t)\|^2_{\H} + \left(\mu-\frac{\varrho}{4\lambda_1}\right)\|\w_{\varepsilon}(t)\|^2_{\V}-\frac{c_1^2}{\mu}\|\w_{\varepsilon}(t)\|^2_{\H}\|\u(t)\|^2_{\V}\nonumber\\&\leq\left(\varepsilon^2C|\z(\vartheta_{t}\omega)|^2++\varepsilon^{\frac{r+1}{r} }C |\z(\vartheta_{t}\omega)|^{\frac{r+1}{r} }\right)\left(\|\u_{\varepsilon}(t)\|^2_{\V}+\|\u(t)\|^2_{\V}+1\right),
		\end{align} 
		for a.e. $t\in[0,\tau]$. Now, consider big lapsing time $T=T_{B_{\H},\frac{\varrho\mu}{2c_1^2}}>0$ such that \eqref{uni5} holds, that is,
		\begin{align*}
		\mu\lambda_1-\frac{c_1^2}{\mu}\|\u(t)\|^2_{\V}\geq\frac{\varrho}{2}, \ \text{ for all }\ t\geq T.
		\end{align*}
		It follows from \eqref{DA7} that 
		\begin{align}\label{DA8}
		\frac{\d}{\d t} \|\w_{\varepsilon}(t)\|^2_{\H} + \frac{\varrho}{4}\|\w_{\varepsilon}(t)\|^2_{\H}&\leq\left(\varepsilon^2C|\z(\vartheta_{t}\omega)|^2+\varepsilon^{\frac{r+1}{r} }C |\z(\vartheta_{t}\omega)|^{\frac{r+1}{r} }\right)\left(\|\u_{\varepsilon}(t)\|^2_{\V}+1\right),
		\end{align}
		for a.e. $t\in[T,\tau]$. Variation of constants formula leads to  
		\begin{align*}
		\|\w_{\varepsilon}(t,\omega,\w_{\varepsilon}(0))\|^2_{\H}&\leq e^{-\frac{\varrho}{4}(t-T)}\|\w_{\varepsilon}(T,\omega,\w_{\varepsilon}(0))\|^2_{\H}\nonumber\\&\quad+\varepsilon^2C\int_{T}^{t}e^{\frac{\varrho}{4}(\upeta-t)}|\z(\vartheta_{\upeta}\omega)|^2\bigg(\|\u_{\varepsilon}(\upeta,\omega,\u_{\varepsilon}(0))\|^2_{\V}+1\bigg)\d\upeta\nonumber\\&\quad+\varepsilon^{\frac{r+1}{r}}C\int_{T}^{t}e^{\frac{\varrho}{4}(\upeta-t)}|\z(\vartheta_{\upeta}\omega)|^{\frac{r+1}{r}}\bigg(\|\u_{\varepsilon}(\upeta,\omega,\u_{\varepsilon}(0))\|^2_{\V}+1\bigg)\d\upeta, \ t>T.
		\end{align*}
		Therefore, replacing $\omega$ by $\vartheta_{-t}\omega$, we obtain
		\begin{align}\label{DA9}
		&	\|\w_{\varepsilon}(t,\vartheta_{-t}\omega,\w_{\varepsilon}(0))\|^2_{\H}\nonumber\\&\leq e^{-\frac{\varrho}{4}(t-T)}\|\w_{\varepsilon}(T,\vartheta_{-t}\omega,\w_{\varepsilon}(0))\|^2_{\H}\nonumber\\&\quad+\varepsilon^{\frac{r+1}{r}} C\int_{T}^{t}e^{\frac{\varrho}{4}(\upeta-t)}\left(|\z(\vartheta_{\upeta-t}\omega)|^2+|\z(\vartheta_{\upeta-t}\omega)|^{\frac{r+1}{r}}\right)\|\v_{\varepsilon}(\upeta,\vartheta_{-t}\omega,\v_{\varepsilon}(0))\|^2_{\V}\d\upeta\nonumber\\&\quad+\varepsilon^{\frac{r+1}{r}} C\int_{T-t}^{0}e^{\frac{\varrho\upeta}{4}}\left(|\z(\vartheta_{\upeta}\omega)|^{\frac{r+1}{r}}+|\z(\vartheta_{\upeta}\omega)|^2+|\z(\vartheta_{\upeta}\omega)|^{\frac{3r+1}{r}}+|\z(\vartheta_{\upeta}\omega)|^4\right)\d\upeta,
		\end{align}
		for all $t>T.$ We estimate the first term on the right hand side of \eqref{DA9} using \eqref{SH_ab} as 
		\begin{align}\label{DA10}
		& e^{-\frac{\varrho}{4}(t-T)}\|\w_{\varepsilon}(T,\vartheta_{-t}\omega,\w_{\varepsilon}(0))\|^2_{\H}\nonumber\\&\leq e^{-\frac{\varrho}{4}(t-T)}\left(\|\v_{\varepsilon}(T,\vartheta_{-t}\omega,\v_{\varepsilon}(0))\|^2_{\H} +\|\u(T,\x)\|^2_{\H}\right)\nonumber\\&\leq e^{-\frac{\varrho}{4}(t-T)}\bigg[e^{-\frac{\varrho T}{8}+\varepsilon^2K_1\int_{-t}^{T-t}|\z(\vartheta_{s}\omega)|^2\d s}\|\v_{\varepsilon}(0)\|^2_{\H}\nonumber\\&\quad+e^{\frac{\varrho}{8}(t-T)}\int_{-t}^{T-t}C e^{\frac{\varrho\tau}{8}+\varepsilon^2K_1\int_{\tau}^{T-t}|\z(\vartheta_{s}\omega)|^2\d s}\ \big(\|\f\|_{\H}^2 +  \varepsilon^{j} |\z(\vartheta_{\tau}\omega)|^{j}+1\big)\ \d\tau\bigg]\nonumber\\&\quad+ e^{-\frac{\varrho}{4}(t-T)}\|\u(T,\x)\|^2_{\H}\nonumber\\&\leq e^{-(t-T)\left(\frac{\varrho}{8}-\varepsilon^2K_1\frac{\int_{-t}^{T-t} |\z(\vartheta_{s}\omega)|^2\d s}{t-T}\right)}\bigg[e^{-\frac{\varrho t}{8}}\|\v_{\varepsilon}(0)\|^2_{\H}+\int_{-\infty}^{0}C e^{\frac{\varrho\tau}{8}}\ \big(\|\f\|_{\H}^2 + \varepsilon^{4} |\z(\vartheta_{\tau}\omega)|^{4}+1\big)\ \d\tau\bigg]\nonumber\\&\quad+ e^{-\frac{\varrho}{4}(t-T)}\|\u(T,\x)\|^2_{\H}.
		\end{align}
		Making use of Lemma \ref{z_esti}, tempered property of $\v_{\varepsilon}(0)$ and \eqref{Z5} in \eqref{DA10}, we get
		\begin{align}\label{DA11}
		e^{-\frac{\varrho}{4}(t-T)}\|\w_{\varepsilon}(T,\vartheta_{-t}\omega,\w_{\varepsilon}(0))\|^2_{\H}\to 0 \ \text{ as } \ t\to\infty.
		\end{align}
		We estimate the second term on the right hand side of \eqref{DA9} using \eqref{SV_ab} as follows
		\begin{align}\label{DA12}
		&\int_{T}^{t}e^{\frac{\varrho}{4}(\upeta-t)}\left(|\z(\vartheta_{\upeta-t}\omega)|^2+|\z(\vartheta_{\upeta-t}\omega)|^{\frac{r+1}{r}}\right)\|\v_{\varepsilon}(\upeta,\vartheta_{-t}\omega,\v_{\varepsilon}(0))\|^2_{\V}\d\upeta\nonumber\\&\leq\int_{T}^{t}e^{\frac{\varrho}{4}(\upeta-t)}\left(|\z(\vartheta_{\upeta-t}\omega)|^2+|\z(\vartheta_{\upeta-t}\omega)|^{\frac{r+1}{r}}\right)\bigg[e^{-\frac{\varrho \upeta}{8}+\varepsilon^2K_1\int_{-t}^{\upeta-t}|\z(\vartheta_{s}\omega)|^2\d s}\|\v_{\varepsilon}(0)\|^2_{\H}\nonumber\\&\quad+e^{\frac{\varrho}{8}(t-\upeta)}\int_{-t}^{\upeta-t}C e^{\frac{\varrho\xi}{8}+\varepsilon^2K_1\int_{\xi}^{\upeta-t}|\z(\vartheta_{s}\omega)|^2\d s}\ \big(\|\f\|_{\H}^2+ \varepsilon^{4}|\z(\vartheta_{\xi }\omega)|^{4}+ 1\big)\d \xi\bigg]\d \upeta\nonumber\\&=e^{-\frac{\varrho t}{8}} \int_{T-t}^{0}e^{\frac{\varrho\upeta}{8}+\varepsilon^2K_1\int_{-t}^{\upeta}|\z(\vartheta_{s}\omega)|^2\d s}\left(|\z(\vartheta_{\upeta}\omega)|^2+|\z(\vartheta_{\upeta}\omega)|^{\frac{r+1}{r}}\right)\|\v_{\varepsilon}(0)\|^2_{\H}\d\upeta\nonumber\\&\quad+\int_{T-t}^{0} e^{\frac{\varrho\upeta}{8}}\left(|\z(\vartheta_{\upeta}\omega)|^2+|\z(\vartheta_{\upeta}\omega)|^{\frac{r+1}{r}}\right)\int_{-t}^{\upeta}Ce^{\frac{\varrho\xi}{8}+\varepsilon^2K_1\int_{\xi}^{\upeta}|\z(\vartheta_{s}\omega)|^2\d s}\ \big(\|\f\|_{\H}^2\nonumber\\&\quad+ \varepsilon^{4}|\z(\vartheta_{\xi }\omega)|^{4}+ 1\big)\d \xi\d \upeta\nonumber\\&\leq  e^{-\frac{\varrho t}{8}+K_1\int_{-t}^{0}|\z(\vartheta_{s}\omega)|^2\d s}\int_{T-t}^{0}e^{\frac{\varrho\upeta}{8}}\left(|\z(\vartheta_{\upeta}\omega)|^2+|\z(\vartheta_{\upeta}\omega)|^{\frac{r+1}{r}}\right)\|\v_{\varepsilon}(0)\|^2_{\H}\d\upeta\nonumber\\&\quad+\int_{T-t}^{0} e^{\frac{\varrho\upeta}{8}}\left(|\z(\vartheta_{\upeta}\omega)|^2+|\z(\vartheta_{\upeta}\omega)|^{\frac{r+1}{r}}\right)\d \upeta\int_{-t}^{0}Ce^{\frac{\varrho\xi}{8}+K_1\int_{\xi}^{0}|\z(\vartheta_{s}\omega)|^2\d s}\ \big(\|\f\|_{\H}^2\nonumber\\&\quad+|\z(\vartheta_{\xi }\omega)|^{4}+ 1\big)\d \xi.
		\end{align}
		Now, we consider $\alpha$ large enough $\left( \alpha>\frac{8K_1\Gamma(\frac{3}{2})}{\varrho\sqrt{\pi}}\right)$ such that from \eqref{Z2}, we have
		\begin{align}\label{Z6}
		K_1\mathbb{E}(|\z(\cdot)|^2)<\frac{\varrho}{8} .
		\end{align}
		Then by ergodic theory and the temperedness of $s\mapsto|\z(\vartheta_{s}\omega)|^{k}$ as $s\to-\infty, \ k\in \mathbb{N},$ for the second term, we find 
		\begin{align}\label{DA13}
		\limsup_{t\to \infty} \int_{T}^{t} e^{\frac{\varrho}{4}(\upeta-t)}\left(|\z(\vartheta_{\upeta-t}\omega)|^2+|\z(\vartheta_{\upeta-t}\omega)|^{\frac{r+1}{r}}\right)\|\v_{\varepsilon}(\upeta,\vartheta_{-t}\omega, \v_{\varepsilon}(0))\|^2_{\V} \d\upeta \leq \kappa(\omega),
		\end{align}
		where $\kappa(\omega)$ is a random variable (independent of $\varepsilon$) given by
		\begin{align*}
		\kappa(\omega) &:= C\int_{-\infty}^{0} e^{\frac{\varrho\upeta}{8}}\left(|\z(\vartheta_{\upeta}\omega)|^2+|\z(\vartheta_{\upeta}\omega)|^{\frac{r+1}{r}}\right)\d \upeta\int_{-\infty}^{0}e^{\frac{\varrho\xi}{8}+K_1\int_{\xi}^{0}|\z(\vartheta_{s}\omega)|^2\d s}\ \big(\|\f\|_{\H}^2\nonumber\\&\qquad+|\z(\vartheta_{\xi }\omega)|^{4}+ 1\big)\d \xi.
		\end{align*}
		Hence by \eqref{DA9}, \eqref{DA11} and \eqref{DA13}, one can easily conclude the proof of lemma.
	\end{proof}
	\begin{theorem}\label{Conver-add}
		For $n=2 \text{ and } 1\leq r\leq2$, let $\f\in \H$. Then, there exists a random variable $R(\omega)$ (independent of $\varepsilon$) such that for every $\varepsilon\in(0,1],$ the additive noise random attractor $\mathcal{A}_{\varepsilon}$ and the deterministic attractor $\mathcal{A}$ satisfy
		\begin{align*}
		\emph{dist}_{\H}(\mathcal{A}_{\varepsilon}(\omega),\mathcal{A}) \leq \varepsilon^{\frac{r+1}{2r}} R(\omega), \ \text{ for all }\ \omega\in \Omega,
		\end{align*}
		where $\emph{dist}_{\H}(A,B)=\max\{\emph{dist}(A,B),\emph{dist}(B,A)\}$.
	\end{theorem}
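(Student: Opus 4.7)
The plan is to exploit the singleton structure of $\mathcal{A}$: in this case, the Hausdorff semi-distance satisfies
$$\text{dist}(\mathcal{A},\mathcal{A}_{\varepsilon}(\omega))=\inf_{b\in\mathcal{A}_{\varepsilon}(\omega)}\|\textbf{a}_{*}-b\|_{\H}\leq \sup_{b\in\mathcal{A}_{\varepsilon}(\omega)}\|b-\textbf{a}_{*}\|_{\H}=\text{dist}(\mathcal{A}_{\varepsilon}(\omega),\mathcal{A}),$$
so $\text{dist}_{\H}(\mathcal{A}_{\varepsilon}(\omega),\mathcal{A})$ coincides with the upper semi-distance and it suffices to bound the latter. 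Note here that $\mathcal{A}_{\varepsilon}(\omega)$ is nonempty, being a compact random set.

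The next step is to exploit the invariance of the random attractor. Fix any $a\in\mathcal{A}_{\varepsilon}(\omega)$. By invariance of $\mathcal{A}_{\varepsilon}$ under the cocycle $\phi_{\varepsilon}$ generated by \eqref{SCBF-add}, for every $t>0$ there exists $a_{-t}\in\mathcal{A}_{\varepsilon}(\vartheta_{-t}\omega)$ with $a=\phi_{\varepsilon}(t,\vartheta_{-t}\omega)a_{-t}=\u_{\varepsilon}(t,\vartheta_{-t}\omega,a_{-t})$. Setting $\v_{\varepsilon}(0):=a_{-t}-\varepsilon\z(\vartheta_{-t}\omega)\Phi$ and using the conjugation $\v_{\varepsilon}(t)=\u_{\varepsilon}(t)-\varepsilon\z(\vartheta_{t}\cdot)\Phi$, we obtain
$$a-\varepsilon\z(\omega)\Phi=\v_{\varepsilon}(t,\vartheta_{-t}\omega,\v_{\varepsilon}(0)).$$
Since $\mathcal{A}=\{\textbf{a}_{*}\}$ is invariant under the deterministic semigroup, $\u(t,\textbf{a}_{*})=\textbf{a}_{*}$, so the triangle inequality yields
$$\|a-\textbf{a}_{*}\|_{\H}\leq \|\v_{\varepsilon}(t,\vartheta_{-t}\omega,\v_{\varepsilon}(0))-\u(t,\textbf{a}_{*})\|_{\H}+\varepsilon|\z(\omega)|\|\Phi\|_{\H}=\|\w_{\varepsilon}(t,\vartheta_{-t}\omega,\w_{\varepsilon}(0))\|_{\H}+\varepsilon|\z(\omega)|\|\Phi\|_{\H}.$$

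Now I would invoke Lemma \ref{PertRad-add}. Because $\mathcal{A}_{\varepsilon}$ is a tempered compact random set, the family $\{a_{-t}-\varepsilon\z(\vartheta_{-t}\omega)\Phi:t>0\}$ sits inside a single tempered set $D\in\mathfrak{D}$ (independent of the choice of $a$ and $a_{-t}$), so the random variable $\upgamma(\omega)$ delivered by the lemma (depending only on $D$ and $\|\textbf{a}_{*}\|_{\H}$) is uniform over $a\in\mathcal{A}_{\varepsilon}(\omega)$. Taking $\limsup_{t\to\infty}$ in the displayed inequality and using \eqref{Diff_rad} gives
$$\|a-\textbf{a}_{*}\|_{\H}\leq \varepsilon^{\frac{r+1}{2r}}\sqrt{\upgamma(\omega)}+\varepsilon|\z(\omega)|\|\Phi\|_{\H}.$$
Since $1\leq r\leq 2$ forces $\tfrac{r+1}{2r}\leq 1$ and $\varepsilon\in(0,1]$, we have $\varepsilon\leq \varepsilon^{(r+1)/(2r)}$, so the right-hand side is dominated by $\varepsilon^{(r+1)/(2r)}R(\omega)$ with $R(\omega):=\sqrt{\upgamma(\omega)}+|\z(\omega)|\|\Phi\|_{\H}$. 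Taking the supremum over $a\in\mathcal{A}_{\varepsilon}(\omega)$ and combining with the observation of the first paragraph finishes the proof.

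The only delicate point is ensuring the uniformity of $\upgamma$ across all the pullback points $a_{-t}\in\mathcal{A}_{\varepsilon}(\vartheta_{-t}\omega)$ that vary with $t$; this is the main reason for packaging the initial data into a single tempered element $D\in\mathfrak{D}$. Everything else is bookkeeping on the singleton structure of $\mathcal{A}$ together with the rate already extracted in Lemma \ref{PertRad-add}.
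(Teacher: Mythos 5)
Your proposal is correct and follows essentially the same route as the paper's (very brief) proof: invariance of the random attractor, pullback to time $-t$, the fixed-point property of the singleton $\mathcal{A}=\{\textbf{a}_{*}\}$, and Lemma \ref{PertRad-add} to control $\|\w_{\varepsilon}(t,\vartheta_{-t}\omega,\w_{\varepsilon}(0))\|_{\H}$ as $t\to\infty$. You are in fact more careful than the paper on two points it glosses over — the $\varepsilon|\z(\omega)|\|\Phi\|_{\H}$ correction coming from the conjugation $\v_{\varepsilon}=\u_{\varepsilon}-\varepsilon\z(\vartheta_t\omega)\Phi$ (absorbed since $\tfrac{r+1}{2r}\leq 1$ for $1\leq r\leq 2$), and the uniformity of the tempered set containing the pullback initial data — but these are refinements of the same argument, not a different one.
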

	\begin{proof}
		Let $\emph{\textbf{a}}_{\varepsilon}(\omega)$ be an arbitrary element of $\mathcal{A}_{\varepsilon}(\omega)$. Then by the invariance property of random attractor, we have $\emph{\textbf{a}}_{\varepsilon}(\omega)=\v_{\varepsilon}(t,\vartheta_{-t}\omega,\emph{\textbf{a}}_{\varepsilon}(\vartheta_{-t}\omega)),$ for some $\emph{\textbf{a}}(\vartheta_{-t}\omega)\in\mathcal{A}_{\varepsilon}(\vartheta_{-t}\omega), t\geq 0.$ From Theorem \ref{D-SA}, we know that the deterministic attractor $\mathcal{A}=\{\emph{\textbf{a}}_{*}\}$ is a singleton. Therefore, we have 
		\begin{align*}
		\|\emph{\textbf{a}}_{\varepsilon}(\omega)-\emph{\textbf{a}}_{*}\|_{\H} = \|\v_{\varepsilon}(t,\vartheta_{-t}\omega,\emph{\textbf{a}}_{\varepsilon}(\vartheta_{-t}\omega))-\u(t,\emph{\textbf{a}}_{*})\|_{\H}, \ \text{ for all }\ t\geq 0.
		\end{align*}
		Taking $t\to\infty$ and applying Lemma \ref{PertRad-add}, one can conclude the proof.
	\end{proof}
	\begin{remark}
		Our main concern in this work is to prove the convergence of random attractor towards the deterministic singleton attractor. From Theorem \ref{Conver-add}, it is clear that we get the same order of convergence (order of $\e$) as stochastic NSE with additive noise for linear perturbation (that is, for $r=1$) only, see \cite{HCPEK}. For quadratic growth, the order of convergence becomes $\e^{\frac{3}{4}}$. 
	\end{remark}
	\section{2D Stochastic CBF equations with Multiplicative Noise} \label{sec5}\setcounter{equation}{0}
	Let us consider the 2D stochastic convective Brinkman-Forchheimer equations perturbed by multiplicative white noise as follows:
	\begin{equation}\label{SCBF-multi}
	\left\{
	\begin{aligned}
	\frac{\d\u_{\varepsilon}(t)}{\d t}+\mu \A\u_{\varepsilon}(t)+\B(\u_{\varepsilon}(t)) +\beta\mathcal{C}(\u_{\varepsilon}(t))&=\f +\varepsilon \u_{\varepsilon}\circ\frac{\d \W(t)}{\d t} , \quad t\geq 0, \\ 
	\u_{\varepsilon}(0)&=\x,
	\end{aligned}
	\right.
	\end{equation}
	with periodic boundary conditions defined in section \ref{sec2}, where $\f\in \H$ and $\circ$ means that the stochastic integral need to be understood in the  Stratonovich sense. 
	
	Let us define $\v_{\varepsilon}(t, \omega, \v_{\varepsilon}(0))=e^{-\varepsilon\z(\vartheta_{t}\omega)}\u_{\varepsilon}(t, \omega, \u_{\varepsilon}(0))$ with $\v_{\varepsilon}(0)=e^{-\varepsilon\z(\omega)}\u_{\varepsilon}(0)$ where $\z$ is defined by \eqref{OU1} and satisfies \eqref{OU2}, and $\u_{\varepsilon}(\cdot)$ is the solution of \eqref{SCBF-multi}. Then $\v_{\varepsilon}(\cdot)$ satisfies the following:
	\begin{equation}\label{CCBF-multi}
	\left\{
	\begin{aligned}
	\frac{\d\v_{\varepsilon}(t)}{\d t}+\mu \A\v_{\varepsilon}(t)&+e^{\varepsilon\z(\vartheta_{t}\omega)}\B\big(\v_{\varepsilon}(t)\big) +\beta e^{\varepsilon(r-1)\z(\vartheta_{t}\omega)}\mathcal{C}\big(\v_{\varepsilon}(t)\big)\\&=\f e^{-\varepsilon\z(\vartheta_{t}\omega)} + \varepsilon\alpha\z(\vartheta_t\omega)\v_{\varepsilon} , \quad t\geq 0, \\ 
	\v_{\varepsilon}(0)&=e^{-\varepsilon\z(\omega)}\u_{\varepsilon}(0).
	\end{aligned}
	\right.
	\end{equation}
	\subsection{Uniform estimates of solutions}
	The following lemma is useful for our main result of this section.
	\begin{lemma}
		For $n=2 \text{ and } r\geq 1$, let $\f\in \H$. Then for any $\varepsilon\in (0,1]$, the solution $\v_{\varepsilon}$ of \eqref{CCBF-multi} with initial data $\v_{\varepsilon}(0)\in \H$ satisfies 
		\begin{align}\label{M-uni}
		&\|\v_{\varepsilon}(s,\vartheta_{-t}\omega,\v_{\varepsilon}(0))\|^2_{\V}\nonumber\\&\leq Ce^{-\frac{\varrho}{8}s+\int_{-t}^{s-t}2\varepsilon\alpha\z(\vartheta_{\upeta}\omega)\d \upeta}\|\v_{\varepsilon}(0)\|^2_{\H} + Ce^{\frac{\varrho}{8}(t-s)} \int_{-t}^{s-t}e^{-2\varepsilon\z(\vartheta_{\upeta}\omega)+\frac{\varrho}{8}\upeta-\int_{s-t}^{\upeta}2\varepsilon\alpha\z(\vartheta_{\zeta}\omega)\d \zeta}\d \upeta,  
		\end{align}
		for $t\geq s>1.$
	\end{lemma}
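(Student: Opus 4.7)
The plan is to apply the uniform Gronwall (double integration) trick, exactly as in Lemma \ref{d-ab} for the deterministic problem and in the proof of \eqref{SV_ab} for the additive-noise case. The structural reason this works in the multiplicative case is that, in 2D periodic domains, both nonlinear terms are harmless when tested against $\A\v_\varepsilon$: $(\B(\v),\A\v)=0$ by \eqref{d-ab6} and $(\mathcal{C}(\v),\A\v)\geq 0$ by \eqref{d-ab7}. Consequently, the random prefactors $e^{\varepsilon\z(\vartheta_t\omega)}$ and $e^{\varepsilon(r-1)\z(\vartheta_t\omega)}$ multiplying these nonlinearities never interact with the energy balance, and the $\V$-level estimate reduces to a scalar linear inequality with random coefficients.

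First I would test \eqref{CCBF-multi} against $\v_\varepsilon$ in $\H$. Using $b(\v,\v,\v)=0$, $\langle\mathcal{C}(\v),\v\rangle\geq 0$, Poincaré's inequality \eqref{poin}, and Young's inequality applied to $e^{-\varepsilon\z(\vartheta_t\omega)}(\f,\v_\varepsilon)$, while retaining a Poincaré dissipation $\frac{\varrho}{8}\|\v_\varepsilon\|_\H^2$ on the left-hand side, I obtain an inequality of the form
\begin{align*}
\frac{\d}{\d t}\|\v_\varepsilon(t)\|_\H^2+\left(\frac{\varrho}{8}-2\varepsilon\alpha\z(\vartheta_t\omega)\right)\|\v_\varepsilon(t)\|_\H^2+c\|\v_\varepsilon(t)\|_\V^2\leq Ce^{-2\varepsilon\z(\vartheta_t\omega)},
\end{align*}
for some $c>0$. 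The scalar Gronwall inequality then gives a pointwise decay estimate for $\|\v_\varepsilon(t)\|_\H^2$, and integration of the same inequality over a unit time interval yields an $L^2_t\V$-bound in terms of the $\H$-norm of $\v_\varepsilon$ at the left endpoint.

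Next I would test \eqref{CCBF-multi} against $\A\v_\varepsilon$. The identities just mentioned, together with Young's inequality on $e^{-\varepsilon\z(\vartheta_t\omega)}(\f,\A\v_\varepsilon)$, produce
\begin{align*}
\frac{\d}{\d t}\|\v_\varepsilon(t)\|_\V^2+\mu\|\A\v_\varepsilon(t)\|_\H^2\leq Ce^{-2\varepsilon\z(\vartheta_t\omega)}+2\varepsilon\alpha\z(\vartheta_t\omega)\|\v_\varepsilon(t)\|_\V^2.
\end{align*}
Dropping the $\A\v_\varepsilon$-term and applying variation of constants on $[\sigma,s]$ yields a bound in which $\|\v_\varepsilon(\sigma)\|_\V^2$ is multiplied by $e^{\int_\sigma^s 2\varepsilon\alpha\z(\vartheta_\zeta\omega)\d\zeta}$. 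Integrating this in $\sigma$ over $(s-1,s)$, which is permissible since $s>1$, replaces the initial value by its $L^1$-average, which the preceding paragraph controls by $\|\v_\varepsilon(s-1)\|_\H^2$; the pointwise $\H$-decay then expresses this last quantity through $\|\v_\varepsilon(0)\|_\H^2$. A change of variables $\upeta=\tau-t$, together with the substitution $\omega\mapsto\vartheta_{-t}\omega$, finally recasts the bound into the form \eqref{M-uni}.

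The main difficulty is not analytic but bookkeeping: each Gronwall step introduces a weight of the form $e^{\int 2\varepsilon\alpha\z(\vartheta_\zeta\omega)\d\zeta}$, and some care is needed to verify that, upon composition, these weights assemble exactly into the prescribed kernel $e^{-2\varepsilon\z(\vartheta_\upeta\omega)+\frac{\varrho}{8}\upeta-\int_{s-t}^\upeta 2\varepsilon\alpha\z(\vartheta_\zeta\omega)\d\zeta}$ inside the integral, and that the pre-factor $e^{-\frac{\varrho}{8}s+\int_{-t}^{s-t}2\varepsilon\alpha\z(\vartheta_\upeta\omega)\d\upeta}$ in front of $\|\v_\varepsilon(0)\|_\H^2$ emerges with the correct exponent. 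Because $\mu\lambda_1\geq\varrho$ by \eqref{C_2}, the Poincaré rate $\frac{\varrho}{8}$ sits well below what the dissipation provides, so no delicate threshold condition arises.
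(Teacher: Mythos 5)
Your proposal is correct and follows essentially the same route as the paper: an $\H$-level energy estimate giving pointwise decay plus a weighted $L^2_t\V$ bound, then an $\A\v_\varepsilon$-estimate exploiting $b(\v_\varepsilon,\v_\varepsilon,\A\v_\varepsilon)=0$ and $(\mathcal{C}(\v_\varepsilon),\A\v_\varepsilon)\geq 0$ in 2D periodic domains, followed by the unit-interval double-integration trick. The only (harmless) deviation is that you drop $\mu\|\A\v_\varepsilon\|^2_{\H}$ outright where the paper converts it via \eqref{poin_1} into the dissipative coefficient $\frac{\varrho}{8}-2\varepsilon\alpha\z(\vartheta_t\omega)$ at the $\V$ level; since the missing decay factor $e^{-\frac{\varrho}{8}(s-\sigma)}$ is bounded above and below on the unit interval $(s-1,s)$, the weights still assemble into \eqref{M-uni} up to constants.
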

	\begin{proof}
		Taking the inner product of the first equation in \eqref{CCBF-multi} with $\v_{\varepsilon}(\cdot)$ and using the fact that $b(\v_{\varepsilon},\v_{\varepsilon},\v_{\varepsilon})=0$, we get
		\begin{align}\label{M-uni1}
		&\frac{1}{2}\frac{\d}{\d t}\|\v_{\varepsilon}(t)\|^2_{\H} + \mu\|\v_{\varepsilon}(t)\|^2_{\V} +\beta e^{\varepsilon(r-1)\z(\vartheta_{t}\omega)} \|\v_{\varepsilon}(t)\|^{r+1}_{\wi \L^{r+1}}\nonumber\\&= e^{-\varepsilon\z(\vartheta_{t}\omega)}(\f,\v_{\varepsilon}(t)) + \varepsilon\alpha\z(\vartheta_{t}\omega)\|\v_{\varepsilon}(t)\|^2_{\H}\nonumber\\&\leq \frac{e^{-2\varepsilon\z(\vartheta_{t}\omega)}\|\f\|^2_{\H}}{\mu\lambda_1} + \varepsilon\alpha\z(\vartheta_{t}\omega)\|\v_{\varepsilon}(t)\|^2_{\H} +\frac{\mu\lambda_1}{4}\|\v_{\varepsilon}(t)\|^2_{\H},
		\end{align}
		for a.e. $t\in[0,\tau]$. 
		\iffalse 
		By \eqref{poin}, we obtain
		\begin{align}\label{M-uni2}
		&\frac{1}{2}\frac{\d}{\d t}\|\v_{\varepsilon}(t)\|^2_{\H} +\frac{\mu}{4}\|\v_{\varepsilon}(t)\|^2_{\V}+ \frac{3\mu\lambda_1}{4}\|\v_{\varepsilon}(t)\|^2_{\H} +\beta e^{\varepsilon(r-1)\z(\vartheta_{t}\omega)} \|\v_{\varepsilon}(t)\|^{r+1}_{\wi \L^{r+1}}\nonumber\\&\leq \frac{1}{2}\frac{\d}{\d t}\|\v_{\varepsilon}(t)\|^2_{\H} + \mu\|\v_{\varepsilon}(t)\|^2_{\V} +\beta e^{\varepsilon(r-1)\z(\vartheta_{t}\omega)} \|\v_{\varepsilon}(t)\|^{r+1}_{\wi \L^{r+1}},
		\end{align}
		for a.e. $t\in[0,\tau]$. 
		\fi 
		Using \eqref{poin} in \eqref{M-uni1}, we deduce that 
		\begin{align*}
		&\frac{\d}{\d t}\|\v_{\varepsilon}(t)\|^2_{\H} + \frac{\mu}{2}\|\v_{\varepsilon}(t)\|^2_{\V}+(\mu\lambda_1-2\varepsilon\alpha\z(\vartheta_{t}\omega))\|\v_{\varepsilon}(t)\|^2_{\H} +\beta e^{\varepsilon(r-1)\z(\vartheta_{t}\omega)} \|\v_{\varepsilon}(t)\|^{r+1}_{\wi \L^{r+1}}\nonumber\\&\leq \frac{2e^{-2\varepsilon\z(\vartheta_{t}\omega)}\|\f\|^2_{\H}}{\mu\lambda_1},
		\end{align*}
		for a.e. $t\in[0,\tau]$. From \eqref{C_2}, it is clear that $\frac{\varrho}{8}<\mu\lambda_1$, hence we find
		\begin{align}\label{M-uni3}
		&\frac{\d}{\d t}\|\v_{\varepsilon}(t)\|^2_{\H} + \frac{\mu}{2}\|\v_{\varepsilon}(t)\|^2_{\V}+\left(\frac{\varrho}{8}-2\varepsilon\alpha\z(\vartheta_{t}\omega)\right)\|\v_{\varepsilon}(t)\|^2_{\H} +\beta e^{\varepsilon(r-1)\z(\vartheta_{t}\omega)} \|\v_{\varepsilon}(t)\|^{r+1}_{\wi \L^{r+1}}\nonumber\\&\leq \frac{2e^{-2\varepsilon\z(\vartheta_{t}\omega)}\|\f\|^2_{\H}}{\mu\lambda_1},
		\end{align}
		for a.e. $t\in[0,\tau]$. Variation of constants formula yields 
		\begin{align}\label{M-uni4}
		&\|\v_{\varepsilon}(s,\omega,\v_{\varepsilon}(0))\|^2_{\H}+\frac{\mu}{2}\int_{0}^{s}e^{\int_{s}^{\upeta}\left(\frac{\varrho}{8}-2\varepsilon\alpha\z(\vartheta_{\zeta}\omega)\right)\d \zeta}\|\v_{\varepsilon}(\upeta,\omega,\v_{\varepsilon}(0))\|^2_{\V}\d\upeta\nonumber\\&\quad+\beta\int_{0}^{s}e^{\varepsilon(r-1)\z(\vartheta_{\upeta}\omega)+\int_{s}^{\upeta}\left(\frac{\varrho}{8}-2\varepsilon\alpha\z(\vartheta_{\zeta}\omega)\right)\d \zeta}\|\v_{\varepsilon}(\upeta,\omega,\v_{\varepsilon}(0))\|^{r+1}_{\wi \L^{r+1}}\d\upeta\nonumber\\&\leq e^{-\int_{0}^{s}\left(\frac{\varrho}{8}-2\varepsilon\alpha\z(\vartheta_{\upeta}\omega)\right)\d \upeta}\|\v_{\varepsilon}(0)\|^2_{\H} + \frac{2\|\f\|^2_{\H}}{\mu\lambda_1} \int_{0}^{s}e^{-2\varepsilon\z(\vartheta_{\upeta}\omega)+\int_{s}^{\upeta}\left(\frac{\varrho}{8}-2\varepsilon\alpha\z(\vartheta_{\zeta}\omega)\right)\d \zeta}\d \upeta,
		\end{align}
		for all $s\geq 0$. Hence, for any $t\geq s\geq0$, we have 
		\begin{align}\label{M-uni5}
		&\|\v_{\varepsilon}(s,\vartheta_{-t}\omega,\v_{\varepsilon}(0))\|^2_{\H}+\frac{\mu}{2}\int_{0}^{s}e^{\int_{s}^{\upeta}\left(\frac{\varrho}{8}-2\varepsilon\alpha\z(\vartheta_{\zeta-t}\omega)\right)\d \zeta}\|\v_{\varepsilon}(\upeta,\vartheta_{-t}\omega,\v_{\varepsilon}(0))\|^2_{\V}\d\upeta\nonumber\\&\quad+\beta\int_{0}^{s}e^{\varepsilon(r-1)\z(\vartheta_{\upeta-t}\omega)+\int_{s}^{\upeta}\left(\frac{\varrho}{8}-2\varepsilon\alpha\z(\vartheta_{\zeta-t}\omega)\right)\d \zeta}\|\v_{\varepsilon}(\upeta,\vartheta_{-t}\omega,\v_{\varepsilon}(0))\|^{r+1}_{\wi \L^{r+1}}\d\upeta\nonumber\\&\leq e^{-\int_{0}^{s}\left(\frac{\varrho}{8}-2\varepsilon\alpha\z(\vartheta_{\upeta-t}\omega)\right)\d \upeta}\|\v_{\varepsilon}(0)\|^2_{\H} + \frac{2\|\f\|^2_{\H}}{\mu\lambda_1} \int_{0}^{s}e^{-2\varepsilon\z(\vartheta_{\upeta-t}\omega)+\int_{s}^{\upeta}\left(\frac{\varrho}{8}-2\varepsilon\alpha\z(\vartheta_{\zeta-t}\omega)\right)\d \zeta}\d \upeta\nonumber\\&=e^{-\frac{\varrho}{8}s+\int_{-t}^{s-t}2\varepsilon\alpha\z(\vartheta_{\upeta}\omega)\d \upeta}\|\v_{\varepsilon}(0)\|^2_{\H} + \frac{2\|\f\|^2_{\H}}{\mu\lambda_1}e^{\frac{\varrho}{8}(t-s)} \int_{-t}^{s-t}e^{-2\varepsilon\z(\vartheta_{\upeta}\omega)+\frac{\varrho}{8}\upeta-\int_{s-t}^{\upeta}2\varepsilon\alpha\z(\vartheta_{\zeta}\omega)\d \zeta}\d \upeta.
		\end{align}
		Taking the inner product of the first equation in \eqref{CCBF-multi} with $\A\v_{\varepsilon}(\cdot)$, we find 
		\begin{align}\label{M-uni6}
		&\frac{1}{2}\frac{\d}{\d t}\|\v_{\varepsilon}(t)\|^2_{\V}+\mu\|\A\v_{\varepsilon}(t)\|^2_{\H} +\beta e^{\varepsilon(r-1)\z(\vartheta_{t}\omega)}\int_{\mathbb{T}^2}|\nabla\v_{\varepsilon}(t,x)|^2|\v_{\varepsilon}(t,x)|^{r-1}\d x\nonumber\\&\quad +4\beta e^{\varepsilon(r-1)\z(\vartheta_{t}\omega)}\left[\frac{r-1}{(r+1)^2}\right]\int_{\mathbb{T}^2}|\nabla|\v_{\varepsilon}(t,x)|^{\frac{r+1}{2}}|^2\d x\nonumber\\&= e^{-\varepsilon\z(\vartheta_{t}\omega)}(\f,\A\v_{\varepsilon}(t)) + \varepsilon\alpha\z(\vartheta_{t}\omega)\|\v_{\varepsilon}(t)\|^2_{\V},
		\end{align}
		where we used the fact that
		\begin{align*}
		b(\v_{\varepsilon},\v_{\varepsilon},\A\v_{\varepsilon})=0,
		\end{align*}
		and
		\begin{align*}
		\big\langle\mathcal{C}\big(\v_{\varepsilon}\big),\A\v_{\varepsilon}\big\rangle&=\int_{\mathbb{T}^2}|\nabla\v_{\varepsilon}(x)|^2|\v_{\varepsilon}(x)|^{r-1}\d x+4\left[\frac{r-1}{(r+1)^2}\right]\int_{\mathbb{T}^2}|\nabla|\v_{\varepsilon}(x)|^{\frac{r+1}{2}}|^2\d x.
		\end{align*}
		in  2D periodic domains. Applying Cauchy-Schwarz and Young's inequalities in the left hand side of \eqref{M-uni6}, we obtain
		\begin{align}\label{M-uni7}
		\frac{1}{2}\frac{\d}{\d t}\|\v_{\varepsilon}(t)\|^2_{\V}+\mu\|\A\v_{\varepsilon}(t)\|^2_{\H}\leq \frac{e^{-2\varepsilon\z(\vartheta_{t}\omega)}\|\f\|^2_{\H}}{2\mu} + \varepsilon\alpha\z(\vartheta_{t}\omega)\|\v_{\varepsilon}(t)\|^2_{\V} +\frac{\mu}{2}\|\A\v_{\varepsilon}(t)\|^2_{\H},
		\end{align}
		for a.e. $t\in[0,\tau]$. Also, using \eqref{poin_1} in  \eqref{M-uni7}, we deduce that 
		\iffalse
		\begin{align}\label{M-uni8}
		&	\frac{\d}{\d t}\|\v_{\varepsilon}(t)\|^2_{\V}+\mu\lambda_1\|\v_{\varepsilon}(t)\|^2_{\V}\leq\frac{\d}{\d t}\|\v_{\varepsilon}(t)\|^2_{\V}+\mu\|\A\v_{\varepsilon}(t)\|^2_{\H}\nonumber\\&\quad\leq \frac{e^{-2\varepsilon\z(\vartheta_{t}\omega)}\|\f\|^2_{\H}}{\mu} + 2\varepsilon\alpha\z(\vartheta_{t}\omega)\|\v_{\varepsilon}(t)\|^2_{\V},
		\end{align}
		for a.e. $t\in[0,\tau]$. 
		From \eqref{M-uni8}, we have
		\fi 
		\begin{align*}
		\frac{\d}{\d t}\|\v_{\varepsilon}(t)\|^2_{\V}+\left(\mu\lambda_1-2\varepsilon\alpha\z(\vartheta_{t}\omega)\right)\|\v_{\varepsilon}(t)\|^2_{\V}\leq \frac{e^{-2\varepsilon\z(\vartheta_{t}\omega)}\|\f\|^2_{\H}}{\mu},
		\end{align*}
		for a.e. $t\in[0,\tau]$. From \eqref{C_2}, it is clear that $\frac{\varrho}{8}<\mu\lambda_1$, and thus we get 
		\begin{align*}
		\frac{\d}{\d t}\|\v_{\varepsilon}(t)\|^2_{\V}+\left(\frac{\varrho}{8}-2\varepsilon\alpha\z(\vartheta_{t}\omega)\right)\|\v_{\varepsilon}(t)\|^2_{\V}\leq \frac{e^{-2\varepsilon\z(\vartheta_{t}\omega)}\|\f\|^2_{\H}}{\mu}, 
		\end{align*}
		for a.e. $t\in[0,\tau]$. Applying the same argument as of \eqref{M-uni5}, for $t\geq s\geq \uprho>0,$ we arrive at
		\begin{align}\label{M-uni9}
		&\|\v_{\varepsilon}(s,\vartheta_{-t}\omega,\v_{\varepsilon}(0))\|^2_{\V}\nonumber\\&\leq e^{-\int_{\uprho}^{s}\left(\frac{\varrho}{8}-2\varepsilon\alpha\z(\vartheta_{\upeta-t}\omega)\right)\d \upeta}\|\v_{\varepsilon}(\uprho,\vartheta_{-t}\omega,\v_{\varepsilon}(0))\|^2_{\V} + \frac{\|\f\|^2_{\H}}{\mu} \int_{\uprho}^{s}e^{-2\varepsilon\z(\vartheta_{\upeta-t}\omega)+\int_{s}^{\upeta}\left(\frac{\varrho}{8}-2\varepsilon\alpha\z(\vartheta_{\zeta-t}\omega)\right)\d \zeta}\d \upeta\nonumber\\&\leq e^{-\int_{\uprho}^{s}\left(\frac{\varrho}{8}-2\varepsilon\alpha\z(\vartheta_{\upeta-t}\omega)\right)\d \upeta}\|\v_{\varepsilon}(\uprho,\vartheta_{-t}\omega,\v_{\varepsilon}(0))\|^2_{\V} + \frac{\|\f\|^2_{\H}}{\mu} \int_{0}^{s}e^{-2\varepsilon\z(\vartheta_{\upeta-t}\omega)+\int_{s}^{\upeta}\left(\frac{\varrho}{8}-2\varepsilon\alpha\z(\vartheta_{\zeta-t}\omega)\right)\d \zeta}\d \upeta.
		\end{align}
		Integrating \eqref{M-uni9} with respect to $\uprho$ over $(s-1,s),$ for $s>1$ and using \eqref{M-uni5}, we obtain
		\begin{align*}
		\|\v_{\varepsilon}(s,\vartheta_{-t}\omega,\v_{\varepsilon}(0))\|^2_{\V}&\leq \int_{s-1}^{s}e^{-\int_{\uprho}^{s}\left(\frac{\varrho}{8}-2\varepsilon\alpha\z(\vartheta_{\upeta-t}\omega)\right)\d \upeta}\|\v_{\varepsilon}(\uprho,\vartheta_{-t}\omega,\v_{\varepsilon}(0))\|^2_{\V}\d \uprho \nonumber\\&\quad+ \frac{\|\f\|^2_{\H}}{\mu} \int_{0}^{s}e^{-2\varepsilon\z(\vartheta_{\upeta-t}\omega)+\int_{s}^{\upeta}\left(\frac{\varrho}{8}-2\varepsilon\alpha\z(\vartheta_{\zeta-t}\omega)\right)\d \zeta}\d \upeta\nonumber\\&\leq Ce^{-\int_{0}^{s}\left(\frac{\varrho}{8}-2\varepsilon\alpha\z(\vartheta_{\upeta-t}\omega)\right)\d \upeta}\|\v_{\varepsilon}(0)\|^2_{\H} \nonumber\\&\quad+ C \int_{0}^{s}e^{-2\varepsilon\z(\vartheta_{\upeta-t}\omega)+\int_{s}^{\upeta}\left(\frac{\varrho}{8}-2\varepsilon\alpha\z(\vartheta_{\zeta-t}\omega)\right)\d \zeta}\d \upeta\nonumber\\&= Ce^{-\frac{\varrho}{8}s+\int_{-t}^{s-t}2\varepsilon\alpha\z(\vartheta_{\upeta}\omega)\d \upeta}\|\v_{\varepsilon}(0)\|^2_{\H} \nonumber\\&\quad+ Ce^{\frac{\varrho}{8}(t-s)} \int_{-t}^{s-t}e^{-2\varepsilon\z(\vartheta_{\upeta}\omega)+\frac{\varrho}{8}\upeta-\int_{s-t}^{\upeta}2\varepsilon\alpha\z(\vartheta_{\zeta}\omega)\d \zeta}\d \upeta,
		\end{align*}
		for $t\geq s>1$. 
	\end{proof}
	\subsection{Perturbation radius of the singleton attractor under multiplicative noise}
	Let us now consider the difference $\w_{\varepsilon}(\cdot)=\v_{\varepsilon}(\cdot)-\u(\cdot)$, where $\v_{\varepsilon}(\cdot)$ and $\u(\cdot)$ are the solutions of \eqref{CCBF-multi} and \eqref{D-CBF}, respectively. It is obvious that $\w_{\varepsilon}(\cdot)$ satisfies 
	\begin{equation}\label{Diff-CBF_multi}
	\left\{
	\begin{aligned}
	\frac{\d\w_{\varepsilon}}{\d t}&=-\mu \A\w_{\varepsilon}-e^{\varepsilon\z(\vartheta_{t}\omega)}\B(\v_{\varepsilon})+\B(\u) -\beta e^{\varepsilon(r-1)\z(\vartheta_{t}\omega)}\mathcal{C}(\v_{\varepsilon})+\beta\mathcal{C}(\u)\\&\quad+ \left(e^{-\varepsilon\z(\vartheta_{t}\omega)}-1\right)\f+ \varepsilon\alpha\z(\vartheta_t\omega)\v_{\varepsilon} , \\ 
	\w_{\varepsilon}(0)&=(e^{-\varepsilon\z(\vartheta_{t}\omega)}-1)\x,
	\end{aligned}
	\right.
	\end{equation}
	in $\V'$. We point out that in the case of multiplicative noise, we consider the absorption constant $1\leq r<\infty$. 
	\begin{lemma}\label{PertRad-multi}
		For $n=2$ and $r\geq1$, let $\f\in \H$. Then for any $\varepsilon\in(0,1]$, there exists a random variable $\gamma_{\varepsilon}(\omega)$ such that 
		\begin{align*}
		\limsup_{t\to \infty} \|\w_{\varepsilon}(t,\vartheta_{-t}\omega,\w_{\varepsilon}(0))\|^2_{\H}\leq \gamma_{\varepsilon}(\omega),\quad \varepsilon\in(0,1], \ \omega\in \Omega.
		\end{align*}
		Moreover, as a mapping of $\varepsilon$, $\gamma_{\varepsilon}(\omega)\sim\varepsilon^2$ as $\varepsilon\to0$, for every $\omega\in\Omega$.
	\end{lemma}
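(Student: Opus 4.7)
The strategy is to carry through the same kind of energy estimate that was used for Lemma \ref{PertRad-add}, but with every occurrence of $\varepsilon$ tracked at order two rather than $\varepsilon^{(r+1)/r}$. Taking the inner product of the first equation in \eqref{Diff-CBF_multi} with $\w_{\varepsilon}(\cdot)$, I would split
\begin{align*}
-e^{\varepsilon\z(\vartheta_{t}\omega)}\B(\v_{\varepsilon})+\B(\u) &= -e^{\varepsilon\z(\vartheta_{t}\omega)}\left[\B(\v_{\varepsilon})-\B(\u)\right]-\left(e^{\varepsilon\z(\vartheta_{t}\omega)}-1\right)\B(\u),\\
-\beta e^{\varepsilon(r-1)\z(\vartheta_{t}\omega)}\mathcal{C}(\v_{\varepsilon})+\beta\mathcal{C}(\u) &= -\beta e^{\varepsilon(r-1)\z(\vartheta_{t}\omega)}\left[\mathcal{C}(\v_{\varepsilon})-\mathcal{C}(\u)\right]-\beta\left(e^{\varepsilon(r-1)\z(\vartheta_{t}\omega)}-1\right)\mathcal{C}(\u).
\end{align*}
Using $b(\u,\w_{\varepsilon},\w_{\varepsilon})=0$, the first bracket pairs with $\w_{\varepsilon}$ to produce only $-e^{\varepsilon\z(\vartheta_{t}\omega)}b(\w_{\varepsilon},\u,\w_{\varepsilon})$, exactly the term handled in Lemma \ref{Uniqueness} via \eqref{b1}, while monotonicity \eqref{MO_c} renders the $\mathcal{C}$-bracket non-positive and can be dropped.

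The leftover ``multiplier defect'' terms $(e^{\varepsilon\z(\vartheta_{t}\omega)}-1)\langle\B(\u),\w_{\varepsilon}\rangle$, $\beta(e^{\varepsilon(r-1)\z(\vartheta_{t}\omega)}-1)\langle\mathcal{C}(\u),\w_{\varepsilon}\rangle$, $(e^{-\varepsilon\z(\vartheta_{t}\omega)}-1)(\f,\w_{\varepsilon})$, together with the noise cross term $\varepsilon\alpha\z(\vartheta_{t}\omega)(\u,\w_{\varepsilon})$ coming from $\v_{\varepsilon}=\w_{\varepsilon}+\u$, are all absorbed into $\frac{\mu}{8}\|\w_{\varepsilon}\|^{2}_{\V}$ via Young's inequality. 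For the $\mathcal{C}$-defect I would invoke the two-dimensional Sobolev embedding $\V\hookrightarrow\widetilde{\L}^{r+1}$ (valid for every $r\geq 1$ when $n=2$) and bound
\begin{align*}
\beta\left|e^{\varepsilon(r-1)\z(\vartheta_{t}\omega)}-1\right|\|\u\|^{r}_{\widetilde{\L}^{r+1}}\|\w_{\varepsilon}\|_{\widetilde{\L}^{r+1}}\leq \frac{\mu}{16}\|\w_{\varepsilon}\|^{2}_{\V}+C\left|e^{\varepsilon(r-1)\z(\vartheta_{t}\omega)}-1\right|^{2}\|\u\|^{2r}_{\widetilde{\L}^{r+1}}.
\end{align*}
The elementary inequality $|e^{\varepsilon k\z(\vartheta_{t}\omega)}-1|^{2}\leq C\varepsilon^{2}|\z(\vartheta_{t}\omega)|^{2}e^{2\varepsilon k|\z(\vartheta_{t}\omega)|}$ then produces order $\varepsilon^{2}$ for each defect, with $\omega$-coefficients that remain bounded as $\varepsilon\to 0$. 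The remaining part $\varepsilon\alpha\z(\vartheta_{t}\omega)\|\w_{\varepsilon}\|^{2}_{\H}$ is kept on the left-hand side as an extra linear coefficient.

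Combining these bounds with \eqref{poin}, I arrive at a differential inequality of the form
\begin{align*}
\frac{\d}{\d t}\|\w_{\varepsilon}(t)\|^{2}_{\H}+\left(\mu\lambda_{1}-\frac{c_{1}^{2}}{\mu}\|\u(t)\|^{2}_{\V}-2\varepsilon\alpha\z(\vartheta_{t}\omega)\right)\|\w_{\varepsilon}(t)\|^{2}_{\H}\leq \varepsilon^{2}\,\Theta(t,\omega),
\end{align*}
where $\Theta(t,\omega)$ is a polynomial combination of $|\z(\vartheta_{t}\omega)|^{k}$ ($k\in\{1,2\}$), $\|\u(t)\|_{\V}$, $\|\u(t)\|_{\widetilde{\L}^{r+1}}$, $\|\f\|_{\H}$ and $\|\v_{\varepsilon}(t)\|_{\V}$. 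For $t\geq T=T_{B_{\H}}$ Lemma \ref{d-ab} and \eqref{uni5} yield $\mu\lambda_{1}-(c_{1}^{2}/\mu)\|\u(t)\|^{2}_{\V}\geq \varrho/2$. Choosing $\alpha$ sufficiently large so that \eqref{Z6} holds, a variation-of-constants computation on $[T,t]$, coupled with the temperedness properties \eqref{Z4}--\eqref{Z5} and the uniform $\V$-bound \eqref{M-uni} on $\v_{\varepsilon}$, gives $\limsup_{t\to\infty}\|\w_{\varepsilon}(t,\vartheta_{-t}\omega,\w_{\varepsilon}(0))\|^{2}_{\H}\leq \varepsilon^{2}\gamma(\omega)$; the initial term $\|\w_{\varepsilon}(T,\vartheta_{-t}\omega,\w_{\varepsilon}(0))\|^{2}_{\H}\,e^{-\varrho(t-T)/4}$ tends to zero exactly as in \eqref{DA10}--\eqref{DA11}.

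The main obstacle will be controlling the random exponential factors $e^{2\varepsilon k|\z(\vartheta_{s}\omega)|}$ inside the integrals uniformly as $s\to-\infty$; finiteness of the $\omega$-dependent coefficients ultimately rests on the tempered behaviour \eqref{Z5} and on choosing $\alpha$ large enough in \eqref{Z6} so that the ergodic average of $2\varepsilon\alpha\z$ is dominated by $\varrho/4$. A secondary technical point is preserving the exponent $2$ in $\varepsilon$ uniformly in $r\geq 1$: this is why I prefer the Sobolev bound $\|\w_{\varepsilon}\|_{\widetilde{\L}^{r+1}}\leq C\|\w_{\varepsilon}\|_{\V}$ to balancing against the $\mathcal{C}$-monotonicity coercive term, the latter yielding only $\varepsilon^{(r+1)/r}$ and therefore failing to match the announced scaling for $r>1$.
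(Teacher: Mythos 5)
Your overall architecture coincides with the paper's proof of this lemma: an energy estimate for $\w_{\varepsilon}=\v_{\varepsilon}-\u$, monotonicity \eqref{MO_c} to discard the difference of the damping terms, a Sobolev-embedding bound (rather than a balance against the coercive $\wi\L^{r+1}$ term) for the $\mathcal{C}$-defect so that the exponent $2$ in $\varepsilon$ survives for every $r\geq1$, the lapsing time from \eqref{uni5}, variation of constants combined with the uniform $\V$-bound \eqref{M-uni} on $\v_{\varepsilon}$, and temperedness of $\z$ together with a large choice of $\alpha$. Your closing remark about why the monotonicity route would only give $\varepsilon^{\frac{r+1}{r}}$ is precisely the point that distinguishes this lemma from Lemma \ref{PertRad-add}; the paper implements the same idea by pairing the defect as $\|\u\|^{r}_{\wi\L^{2r}}\|\w_{\varepsilon}\|_{\H}$ and absorbing into $\frac{\varrho}{32}\|\w_{\varepsilon}\|^{2}_{\H}$, which is equivalent in 2D to your $\wi\L^{r+1}$ duality.

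There is, however, one step that does not close as written. Your splitting leaves the quadratic-in-$\w_{\varepsilon}$ convective contribution in the form $e^{\varepsilon\z(\vartheta_{t}\omega)}b(\w_{\varepsilon},\u,\w_{\varepsilon})$, and this is not ``exactly the term handled in Lemma \ref{Uniqueness}'': applying \eqref{b1} and Young's inequality to $e^{\varepsilon\z(\vartheta_{t}\omega)}c_{1}\|\w_{\varepsilon}\|_{\H}\|\w_{\varepsilon}\|_{\V}\|\u\|_{\V}$ produces $\frac{\mu}{2}\|\w_{\varepsilon}\|^{2}_{\V}+\frac{c_{1}^{2}}{2\mu}e^{2\varepsilon\z(\vartheta_{t}\omega)}\|\u\|^{2}_{\V}\|\w_{\varepsilon}\|^{2}_{\H}$, so the Gronwall coefficient you display, $\mu\lambda_{1}-\frac{c_{1}^{2}}{\mu}\|\u(t)\|^{2}_{\V}-2\varepsilon\alpha\z(\vartheta_{t}\omega)$, silently discards the unbounded random factor $e^{2\varepsilon\z(\vartheta_{t}\omega)}$; with that factor present, \eqref{uni5} no longer yields the pointwise lower bound $\varrho/2$. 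The clean repair is the paper's decomposition: write $e^{\varepsilon\z(\vartheta_{t}\omega)}b(\v_{\varepsilon},\v_{\varepsilon},\w_{\varepsilon})-b(\u,\u,\w_{\varepsilon})=(e^{\varepsilon\z(\vartheta_{t}\omega)}-1)b(\v_{\varepsilon},\u,\w_{\varepsilon})+b(\w_{\varepsilon},\u,\w_{\varepsilon})$, so the quadratic term carries no exponential prefactor, while the defect $(e^{\varepsilon\z(\vartheta_{t}\omega)}-1)b(\v_{\varepsilon},\u,\w_{\varepsilon})$ is linear in $\w_{\varepsilon}$ and goes by Young's inequality into $C|e^{\varepsilon\z(\vartheta_{t}\omega)}-1|^{2}\|\v_{\varepsilon}\|^{2}_{\V}\|\u\|^{2}_{\V}$ plus a small multiple of $\|\w_{\varepsilon}\|^{2}_{\V}$, which is then handled with \eqref{M-uni} exactly as you propose. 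Alternatively one can keep your splitting and move $(e^{2\varepsilon\z(\vartheta_{t}\omega)}-1)\frac{c_{1}^{2}}{\mu}\|\u\|^{2}_{\V}$ into the linear coefficient, but then one must additionally verify that its ergodic average is dominated by a fraction of $\varrho$ for $\alpha$ large, a Gaussian-moment argument your outline does not supply.
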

	\begin{proof}
		Taking inner product of first equation of \eqref{Diff-CBF_multi} with $\w_{\varepsilon}(\cdot)$, we get
		\begin{align}\label{W-multi1}
		\frac{1}{2}\frac{\d}{\d t}\|\w_{\varepsilon}(t)\|^2_{\H} &=- \mu \|\w_{\varepsilon}(t)\|^2_{\V} -\left[e^{\varepsilon\z(\vartheta_{t}\omega)}b(\v_{\varepsilon}(t),\v_{\varepsilon}(t),\w_{\varepsilon}(t))-b(\u(t),\u(t),\w_{\varepsilon}(t))\right]\nonumber\\&\quad-\beta\left\langle e^{\varepsilon(r-1)\z(\vartheta_{t}\omega)}\mathcal{C}(\v_{\varepsilon}(t))-\mathcal{C}(\u(t)),\w_{\varepsilon}(t)\right\rangle+\left(e^{-\varepsilon\z(\vartheta_{t}\omega)}-1\right)(\f,\w_{\varepsilon}(t))\nonumber\\&\quad+ (\varepsilon\alpha\z(\vartheta_t\omega)\v_{\varepsilon}(t),\w_{\varepsilon}(t))\nonumber\\&=- \mu \|\w_{\varepsilon}(t)\|^2_{\V} -\left[(e^{\varepsilon\z(\vartheta_{t}\omega)}-1)b(\v_{\varepsilon}(t),\v_{\varepsilon}(t),\w_{\varepsilon}(t))+b(\w_{\varepsilon}(t),\v_{\varepsilon}(t),\w_{\varepsilon}(t))\right]\nonumber\\&\quad-\beta e^{\varepsilon(r-1)\z(\vartheta_{t}\omega)}\left\langle\mathcal{C}(\v_{\varepsilon}(t))-\mathcal{C}(\u(t)),\v_{\varepsilon}(t)-\u(t)\right\rangle\nonumber\\&\quad-\beta(e^{\varepsilon(r-1)\z(\vartheta_{t}\omega)}-1)\left\langle\mathcal{C}(\u(t)),\w_{\varepsilon}(t)\right\rangle+\left(e^{-\varepsilon\z(\vartheta_{t}\omega)}-1\right)(\f,\w_{\varepsilon}(t))\nonumber\\&\quad+\varepsilon\alpha\z(\vartheta_t\omega)\|\w_{\varepsilon}(t)\|^2_{\H}+ (\varepsilon\alpha\z(\vartheta_t\omega)\u(t),\w_{\varepsilon}(t)),
		\end{align}
		for a.e. $t\in[0,\tau]$. Now, we estimate each term from the right hand side of \eqref{W-multi1} carefully to get our required estimate. By \eqref{b1} and Young's inequality, we have 
		\begin{align}\label{W-multi2}
		&|(e^{\varepsilon\z(\vartheta_{t}\omega)}-1)b(\v_{\varepsilon},\v_{\varepsilon},\w_{\varepsilon})+b(\w_{\varepsilon},\v_{\varepsilon},\w_{\varepsilon})|\nonumber\\&=|(e^{\varepsilon\z(\vartheta_{t}\omega)}-1)b(\v_{\varepsilon},\u,\w_{\varepsilon})+b(\w_{\varepsilon},\u,\w_{\varepsilon})|\nonumber\\&\leq C|e^{\varepsilon\z(\vartheta_{t}\omega)}-1|\|\v_{\varepsilon}\|_{\V}\|\u\|_{\V}\|\w_{\varepsilon}\|_{\V}+c_1\|\u\|_{\V}\|\w_{\varepsilon}\|_{\H}\|\w_{\varepsilon}\|_{\V}\nonumber\\&\leq C|e^{\varepsilon\z(\vartheta_{t}\omega)}-1|^2\|\v_{\varepsilon}\|^2_{\V}\|\u\|^2_{\V}+\frac{\varrho}{16\lambda_1}\|\w_{\varepsilon}\|^2_{\V} +\frac{c_1^2}{2\mu}\|\u\|^2_{\V}\|\w_{\varepsilon}\|^2_{\H} +\frac{\mu}{2}\|\w_{\varepsilon}\|^2_{\V}.
		\end{align}
		From \eqref{MO_c}, we have 
		\begin{align}\label{W-multi3}
		-\beta e^{\varepsilon(r-1)\z(\vartheta_{t}\omega)}\left\langle\mathcal{C}(\v_{\varepsilon})-\mathcal{C}(\u),\v_{\varepsilon}-\u\right\rangle\leq 0.
		\end{align}
		Applying H\"older's and Young's inequalities, and Sobolev embedding, we obtain
		\begin{align}\label{W-multi4}
		\left|(e^{\varepsilon(r-1)\z(\vartheta_{t}\omega)}-1)\left\langle\mathcal{C}(\u),\w_{\varepsilon}\right\rangle\right|&\leq\left|e^{\varepsilon(r-1)\z(\vartheta_{t}\omega)}-1\right|\|\u\|^{r}_{\wi\L^{2r}}\|\w_{\varepsilon}\|_{\H}\nonumber\\&\leq C\left|e^{\varepsilon(r-1)\z(\vartheta_{t}\omega)}-1\right|^2\|\u\|^{2r}_{\wi\L^{2r}}+\frac{\varrho}{32}\|\w_{\varepsilon}\|^2_{\H}\nonumber\\&\leq C\left|e^{\varepsilon(r-1)\z(\vartheta_{t}\omega)}-1\right|^2\|\u\|^{2r}_{\V}+\frac{\varrho}{32}\|\w_{\varepsilon}\|^2_{\H},
		\end{align}
		and 
		\begin{align}\label{W-multi5}
		&\left(e^{-\varepsilon\z(\vartheta_{t}\omega)}-1\right)(\f,\w_{\varepsilon})+\varepsilon\alpha\z(\vartheta_t\omega)\|\w_{\varepsilon}\|^2_{\H}+ (\varepsilon\alpha\z(\vartheta_t\omega)\u,\w_{\varepsilon})\nonumber\\&\leq C\left|e^{-\varepsilon\z(\vartheta_{t}\omega)}-1\right|^2+\left(\frac{\varrho}{32}+\varepsilon\alpha\z(\vartheta_{t}\omega)\right)\|\w_{\varepsilon}\|^2_{\H}+\varepsilon^2C\left|\z(\vartheta_{t}\omega)\right|^2\|\u\|^2_{\H}.
		\end{align}
		Combining \eqref{W-multi2}-\eqref{W-multi5} and using it in \eqref{W-multi1}, we get
		\begin{align*}
		&\frac{1}{2}\frac{\d}{\d t}\|\w_{\varepsilon}(t)\|^2_{\H}+\left(\frac{\mu}{2}-\frac{\varrho}{16\lambda_1}\right)\|\w_{\varepsilon}(t)\|^2_{\V}-\left(\frac{\varrho}{16}+\varepsilon\alpha\z(\vartheta_{t}\omega)+\frac{c_1^2}{2\mu}\|\u(t)\|^2_{\V}\right)\|\w_{\varepsilon}(t)\|^2_{\H}\nonumber\\&\leq\varepsilon^2C\left|\z(\vartheta_{t}\omega)\right|^2\|\u(t)\|^2_{\H}+	C|e^{\varepsilon\z(\vartheta_{t}\omega)}-1|^2\|\v_{\varepsilon}(t)\|^2_{\V}\|\u(t)\|^2_{\V}  +C\left|e^{\varepsilon(r-1)\z(\vartheta_{t}\omega)}-1\right|^2\|\u(t)\|^{2r}_{\V}\nonumber\\&\quad+C\left|e^{-\varepsilon\z(\vartheta_{t}\omega)}-1\right|^2,
		\end{align*}
		for a.e. $t\in[0,\tau]$. By \eqref{poin}, we have
		\begin{align}\label{W-multi6}
		&\frac{\d}{\d t}\|\w_{\varepsilon}(t)\|^2_{\H}+\left(\mu\lambda_1-\frac{c_1^2}{\mu}\|\u(t)\|^2_{\V}-\frac{\varrho}{4}-2\varepsilon\alpha\z(\vartheta_{t}\omega)\right)\|\w_{\varepsilon}(t)\|^2_{\H}\nonumber\\&\leq\varepsilon^2C\left|\z(\vartheta_{t}\omega)\right|^2\|\u(t)\|^2_{\H}+	C|e^{\varepsilon\z(\vartheta_{t}\omega)}-1|^2\|\v_{\varepsilon}(t)\|^2_{\V}\|\u(t)\|^2_{\V}  +C\left|e^{\varepsilon(r-1)\z(\vartheta_{t}\omega)}-1\right|^2\|\u(t)\|^{2r}_{\V}\nonumber\\&\quad+C\left|e^{-\varepsilon\z(\vartheta_{t}\omega)}-1\right|^2, 
		\end{align}
		for a.e. $t\in[0,\tau]$. Now, we consider again a time $T=T_{B_{\H},\frac{\varrho\mu}{2c_1^2}}>0$ such that \eqref{uni5} holds, that is,
		\begin{align*}
		\mu\lambda_1-\frac{c_1^2}{\mu}\|\u(t)\|^2_{\V}\geq\frac{\varrho}{2}, \ \text{ for all }\ t\geq T.
		\end{align*}
		It follows from \eqref{W-multi6} that 
		\begin{align}\label{W-multi7}
		&\frac{\d}{\d t}\|\w_{\varepsilon}(t)\|^2_{\H}+\left(\frac{\varrho}{4}-2\varepsilon\alpha\z(\vartheta_{t}\omega)\right)\|\w_{\varepsilon}(t)\|^2_{\H}\nonumber\\&\leq\varepsilon^2C\left|\z(\vartheta_{t}\omega)\right|^2+	C|e^{\varepsilon\z(\vartheta_{t}\omega)}-1|^2\|\v_{\varepsilon}(t)\|^2_{\V} +C\left|e^{\varepsilon(r-1)\z(\vartheta_{t}\omega)}-1\right|^2+C\left|e^{-\varepsilon\z(\vartheta_{t}\omega)}-1\right|^2, 
		\end{align}
		for a.e. $t\in[T,\tau]$. Variation of constants formula gives 
		\begin{align*}
		\|\w_{\varepsilon}(t,\omega,\w_{\varepsilon}(0))\|^2_{\H}&\leq e^{\int_{s}^{t}-\left(\frac{\varrho}{4}-2\varepsilon\alpha\z(\vartheta_{\upeta}\omega)\right)\d \upeta}\|\w_{\varepsilon}(s,\omega,\w_{\varepsilon}(0))\|^2_{\H}+C\int_{s}^{t}e^{\int_{\xi}^{t}-\left(\frac{\varrho}{4}-2\varepsilon\alpha\z(\vartheta_{\upeta}\omega)\right)\d \upeta}\nonumber\\&\quad\times\bigg(\varepsilon^2C\left|\z(\vartheta_{\xi}\omega)\right|^2+	C|e^{\varepsilon\z(\vartheta_{\xi}\omega)}-1|^2\|\v_{\varepsilon}(\xi)\|^2_{\V} +C\left|e^{\varepsilon(r-1)\z(\vartheta_{\xi}\omega)}-1\right|^2\nonumber\\&\quad+C\left|e^{-\varepsilon\z(\vartheta_{\xi}\omega)}-1\right|^2\bigg)\d\xi,
		\end{align*}
		for $T\leq s\leq t.$ Replacing $\omega$ with $\vartheta_{-t}\omega$, for $T\leq s\leq t$, we have 
		\begin{align}
		&\|\w_{\varepsilon}(t,\vartheta_{-t}\omega,\w_{\varepsilon}(0))\|^2_{\H}\nonumber\\&\leq e^{-\frac{\varrho}{4}(t-s)+\int_{s-t}^{0}2\varepsilon\alpha\z(\vartheta_{\upeta}\omega)\d\upeta}\|\w_{\varepsilon}(s,\vartheta_{-t}\omega,\w_{\varepsilon}(0))\|^2_{\H}+\varepsilon^2C\int_{s-t}^{0}e^{\frac{\varrho}{4}\xi+\int_{\xi}^{0}2\varepsilon\alpha\z(\vartheta_{\upeta}\omega)\d\upeta}\left|\z(\vartheta_{\xi}\omega)\right|^2\d\xi\nonumber\\&\quad+C\int_{s}^{t}e^{-\frac{\varrho}{4}(t-\xi)+\int_{\xi-t}^{0}2\varepsilon\alpha\z(\vartheta_{\upeta}\omega)\d\upeta}\bigg(	|e^{\varepsilon\z(\vartheta_{\xi-t}\omega)}-1|^2\|\v_{\varepsilon}(\xi,\vartheta_{-t}\omega,\v_{\varepsilon}(0))\|^2_{\V} \nonumber\\&\quad+\left|e^{\varepsilon(r-1)\z(\vartheta_{\xi-t}\omega)}-1\right|^2+\left|e^{-\varepsilon\z(\vartheta_{\xi-t}\omega)}-1\right|^2\bigg)\d\xi\label{W-multi8}\\&=:J_1(\varepsilon,t)+J_2(\varepsilon,t)+J_3(\varepsilon,t),\nonumber
		\end{align}
		where $J_1(\varepsilon,t), J_2(\varepsilon,t) \text{ and } J_3(\varepsilon,t)$ are the three terms appearing in the  right hand side of \eqref{W-multi8}. 
		Taking $\alpha>0$ large enough such that, by \eqref{Z2}, we get 
		\begin{align*}
		\mathbb{E}\left(2|\z(\cdot)|\right)<\frac{\varrho}{8}.
		\end{align*}
		By \eqref{M-uni}, we obtain 
		\begin{align}\label{W-multi9}
		J_1(\varepsilon,t)&\leq e^{-\frac{\varrho}{4}(t-s)+\int_{s-t}^{0}2\varepsilon\alpha\z(\vartheta_{\upeta}\omega)\d\upeta}\bigg(\|\v_{\varepsilon}(s,\vartheta_{-t}\omega,\v_{\varepsilon}(0))\|^2_{\H}+\|\u(s,\x)\|^2_{\H}\bigg) \nonumber\\&\leq Ce^{-\frac{\varrho}{4}(t-s)+\int_{s-t}^{0}2\varepsilon\alpha\z(\vartheta_{\upeta}\omega)\d\upeta} \bigg(e^{-\frac{\varrho}{8}s+\int_{-t}^{s-t}2\varepsilon\alpha\z(\vartheta_{\upeta}\omega)\d \upeta}\|\v_{\varepsilon}(0)\|^2_{\H} \nonumber\\&\quad+ e^{\frac{\varrho}{8}(t-s)} \int_{-t}^{s-t}e^{-2\varepsilon\z(\vartheta_{\upeta}\omega)+\frac{\varrho}{8}\upeta-\int_{s-t}^{\upeta}2\varepsilon\alpha\z(\vartheta_{\zeta}\omega)\d \zeta}\d \upeta+1\bigg) \nonumber\\&\leq Ce^{-\frac{\varrho}{8}(t-s)-\frac{\varrho}{8}t+\int_{-t}^{0}2\varepsilon\alpha\z(\vartheta_{\upeta}\omega)\d \upeta}\|\v_{\varepsilon}(0)\|^2_{\H} \nonumber\\&\quad+C e^{-\frac{\varrho}{8}(t-s)} \int_{-t}^{s-t}e^{-2\varepsilon\z(\vartheta_{\upeta}\omega)+\frac{\varrho}{8}\upeta-\int_{\upeta}^{0}2\varepsilon\alpha\z(\vartheta_{\zeta}\omega)\d \zeta}\d \upeta+Ce^{-\frac{\varrho}{4}(t-s)+\int_{s-t}^{0}2\varepsilon\alpha\z(\vartheta_{\upeta}\omega)\d\upeta}\nonumber\\&\to 0 \ \text{ as }\ t\to\infty.
		\end{align}
		For the second term, we find 
		\begin{align}\label{W-multi10}
		\limsup_{t\to \infty}J_2(\varepsilon,t)\leq\varepsilon^2C\int_{-\infty}^{0}	e^{\frac{\varrho}{4}\xi+\int_{\xi}^{0}2\varepsilon\alpha\z(\vartheta_{\upeta}\omega)\d\upeta}\left|\z(\vartheta_{\xi}\omega)\right|^2\d\xi.
		\end{align}
		For the third term, using \eqref{M-uni}, we deduce that 
		\begin{align}\label{W-multi11}
		& J_3(\varepsilon,t)\nonumber\\&=C\int_{s}^{t}e^{-\frac{\varrho}{4}(t-\xi)+\int_{\xi-t}^{0}2\varepsilon\alpha\z(\vartheta_{\upeta}\omega)\d\upeta}\bigg(	|e^{\varepsilon\z(\vartheta_{\xi-t}\omega)}-1|^2\|\v_{\varepsilon}(\xi,\vartheta_{-t}\omega,\v_{\varepsilon}(0))\|^2_{\V} \nonumber\\&\quad+\left|e^{\varepsilon(r-1)\z(\vartheta_{\xi-t}\omega)}-1\right|^2+\left|e^{-\varepsilon\z(\vartheta_{\xi-t}\omega)}-1\right|^2\bigg)\d\xi\nonumber\\&\leq C\int_{s}^{t}e^{-\frac{\varrho}{4}(t-\xi)+\int_{\xi-t}^{0}2\varepsilon\alpha\z(\vartheta_{\upeta}\omega)\d\upeta}\bigg(	e^{-\frac{\varrho}{8}\xi+\int_{-t}^{\xi-t}2\varepsilon\alpha\z(\vartheta_{\upeta}\omega)\d \upeta}|e^{\varepsilon\z(\vartheta_{\xi-t}\omega)}-1|^2\|\v_{\varepsilon}(0)\|^2_{\H} \nonumber\\&\quad+ e^{\frac{\varrho}{8}(t-\xi)} |e^{\varepsilon\z(\vartheta_{\xi-t}\omega)}-1|^2\int_{-t}^{\xi-t}e^{-2\varepsilon\z(\vartheta_{\upeta}\omega)+\frac{\varrho}{8}\upeta-\int_{\xi-t}^{\upeta}2\varepsilon\alpha\z(\vartheta_{\zeta}\omega)\d \zeta}\d \upeta +\left|e^{\varepsilon(r-1)\z(\vartheta_{\xi-t}\omega)}-1\right|^2\nonumber\\&\quad+\left|e^{-\varepsilon\z(\vartheta_{\xi-t}\omega)}-1\right|^2\bigg)\d\xi\nonumber\\&= C\int_{s}^{t}e^{-\frac{\varrho}{8}(t-\xi)-\frac{\varrho}{8}t+\int_{-t}^{0}2\varepsilon\alpha\z(\vartheta_{\upeta}\omega)\d\upeta}	|e^{\varepsilon\z(\vartheta_{\xi-t}\omega)}-1|^2\|\v_{\varepsilon}(0)\|^2_{\H}\d\xi \nonumber\\&\quad+C \int_{s}^{t}e^{-\frac{\varrho}{8}(t-\xi)} |e^{\varepsilon\z(\vartheta_{\xi-t}\omega)}-1|^2\int_{-t}^{\xi-t}e^{-2\varepsilon\z(\vartheta_{\upeta}\omega)+\frac{\varrho}{8}\upeta-\int_{\upeta}^{0}2\varepsilon\alpha\z(\vartheta_{\zeta}\omega)\d \zeta}\d \upeta\d\xi \nonumber\\&\quad+ C\int_{s}^{t}e^{-\frac{\varrho}{4}(t-\xi)+\int_{\xi-t}^{0}2\varepsilon\alpha\z(\vartheta_{\upeta}\omega)\d\upeta}\bigg(\left|e^{\varepsilon(r-1)\z(\vartheta_{\xi-t}\omega)}-1\right|^2+\left|e^{-\varepsilon\z(\vartheta_{\xi-t}\omega)}-1\right|^2\bigg)\d\xi\nonumber\\&\leq Ce^{-\frac{\varrho}{8}t+\int_{-t}^{0}2\varepsilon\alpha\z(\vartheta_{\upeta}\omega)\d\upeta}\int_{s-t}^{0}e^{\frac{\varrho}{8}\xi}	|e^{\varepsilon\z(\vartheta_{\xi}\omega)}-1|^2\|\v_{\varepsilon}(0)\|^2_{\H}\d\xi \nonumber\\&\quad+C \int_{s-t}^{0}e^{\frac{\varrho}{8}\xi} |e^{\varepsilon\z(\vartheta_{\xi}\omega)}-1|^2\d\xi\int_{-t}^{0}e^{-2\varepsilon\z(\vartheta_{\upeta}\omega)+\frac{\varrho}{8}\upeta-\int_{\upeta}^{0}2\varepsilon\alpha\z(\vartheta_{\zeta}\omega)\d \zeta}\d \upeta \nonumber\\&\quad+ C\int_{s-t}^{0}e^{\frac{\varrho}{4}\xi+\int_{\xi}^{0}2\varepsilon\alpha\z(\vartheta_{\upeta}\omega)\d\upeta}\bigg(\left|e^{\varepsilon(r-1)\z(\vartheta_{\xi}\omega)}-1\right|^2+\left|e^{-\varepsilon\z(\vartheta_{\xi}\omega)}-1\right|^2\bigg)\d\xi.
		\end{align}
		Therefore, \eqref{W-multi11} implies
		\begin{align}\label{W-multi12}
		\limsup_{t\to \infty}J_3(\varepsilon,t)&\leq C \int_{-\infty}^{0}e^{\frac{\varrho}{8}\xi} |e^{\varepsilon\z(\vartheta_{\xi}\omega)}-1|^2\d\xi\int_{-\infty}^{0}e^{-2\varepsilon\z(\vartheta_{\upeta}\omega)+\frac{\varrho}{8}\upeta-\int_{\upeta}^{0}2\varepsilon\alpha\z(\vartheta_{\zeta}\omega)\d \zeta}\d \upeta \nonumber\\&\quad+ C\int_{-\infty}^{0}e^{\frac{\varrho}{4}\xi+\int_{\xi}^{0}2\varepsilon\alpha\z(\vartheta_{\upeta}\omega)\d\upeta}\bigg(\left|e^{\varepsilon(r-1)\z(\vartheta_{\xi}\omega)}-1\right|^2+\left|e^{-\varepsilon\z(\vartheta_{\xi}\omega)}-1\right|^2\bigg)\d\xi\nonumber\\&=:\gamma^{(1)}_{\varepsilon}(\omega).
		\end{align}
		It is easy to see that $\gamma^{(1)}_{\varepsilon}(\omega)\sim\varepsilon^2$ as $\varepsilon\to0$. Hence, by \eqref{W-multi8}, \eqref{W-multi9}, \eqref{W-multi10} and \eqref{W-multi12}, one can easily conclude the proof. 
	\end{proof}
	\begin{theorem}\label{Conver-multi}
		For $n=2$ and $r\geq1$, let $\f\in\H$. Then, there exists a random variable $\gamma_{\varepsilon}(\omega)$ such that for every $\varepsilon\in(0,1]$, the multiplicative noise random attractor $\mathcal{A}_{\varepsilon}$ and the deterministic attractor $\mathcal{A}$ satisfy 
		\begin{align*}
		\emph{dist}_{\H}\left(\mathcal{A}_{\varepsilon}(\omega),\mathcal{A}\right)\leq\sqrt{\gamma_{\varepsilon}(\omega)},
		\end{align*}
		where $\emph{dist}_{\H}(A,B)=\max\{\emph{dist}(A,B),\emph{dist}(B,A)\}$. Moreover, as a mapping of $\varepsilon$, $\sqrt{\gamma_{\varepsilon}(\omega)}\sim\varepsilon$ as $\varepsilon\to0,$ for every $\omega\in\Omega$.
	\end{theorem}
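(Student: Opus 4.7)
The plan proceeds by a direct transfer of the argument used to prove Theorem \ref{Conver-add}, with the additive perturbation bound replaced by its multiplicative counterpart Lemma \ref{PertRad-multi}. The whole point of having reduced the deterministic attractor to a singleton (Theorem \ref{D-SA} under \eqref{C_1}) is that both Hausdorff semi-distances collapse to a single orbitwise comparison, which is exactly the quantity Lemma \ref{PertRad-multi} controls. Consequently no new PDE estimate is needed at this stage; the work is purely book-keeping on the level of random dynamical systems.

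Concretely, let $\mathbf{a}_*$ denote the unique point of $\mathcal{A}$, so that $\u(t,\mathbf{a}_*)=\mathbf{a}_*$ for every $t\ge 0$ by the invariance of $\mathcal{A}$ under the deterministic semigroup. Fix an arbitrary $\mathbf{a}_\varepsilon(\omega)\in\mathcal{A}_\varepsilon(\omega)$. The invariance of the random attractor for the cocycle generated by the transformed equation \eqref{CCBF-multi} delivers, for every $t\ge 0$, a point $\mathbf{a}_\varepsilon(\vartheta_{-t}\omega)\in\mathcal{A}_\varepsilon(\vartheta_{-t}\omega)$ satisfying $\mathbf{a}_\varepsilon(\omega)=\v_\varepsilon(t,\vartheta_{-t}\omega,\mathbf{a}_\varepsilon(\vartheta_{-t}\omega))$. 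Setting $\w_\varepsilon(0):=\mathbf{a}_\varepsilon(\vartheta_{-t}\omega)-\mathbf{a}_*$, one obtains
\begin{align*}
\|\mathbf{a}_\varepsilon(\omega)-\mathbf{a}_*\|_\H^2
&=\|\v_\varepsilon(t,\vartheta_{-t}\omega,\mathbf{a}_\varepsilon(\vartheta_{-t}\omega))-\u(t,\mathbf{a}_*)\|_\H^2\\
&=\|\w_\varepsilon(t,\vartheta_{-t}\omega,\w_\varepsilon(0))\|_\H^2,
\end{align*}
for every $t\ge 0$. Because $\{\mathcal{A}_\varepsilon(\cdot)\}\in\mathfrak{D}$ and $\mathbf{a}_*$ is deterministic, the initial datum $\w_\varepsilon(0)$ meets the temperedness hypothesis of Lemma \ref{PertRad-multi}. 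Since the left-hand side is independent of $t$, passing to the limit superior as $t\to\infty$ and invoking Lemma \ref{PertRad-multi} gives $\|\mathbf{a}_\varepsilon(\omega)-\mathbf{a}_*\|_\H\le\sqrt{\gamma_\varepsilon(\omega)}$.

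Taking the supremum over $\mathbf{a}_\varepsilon(\omega)\in\mathcal{A}_\varepsilon(\omega)$ yields $\text{dist}(\mathcal{A}_\varepsilon(\omega),\mathcal{A})\le\sqrt{\gamma_\varepsilon(\omega)}$, i.e.\ upper semicontinuity at the asserted rate. The reverse semi-distance is immediate from the singleton structure of $\mathcal{A}$: one has $\text{dist}(\mathcal{A},\mathcal{A}_\varepsilon(\omega))=\inf_{b\in\mathcal{A}_\varepsilon(\omega)}\|\mathbf{a}_*-b\|_\H$, and any element of the non-empty compact set $\mathcal{A}_\varepsilon(\omega)$ already satisfies the bound just proved. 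Combining these gives the symmetric Hausdorff estimate, and the asymptotic $\sqrt{\gamma_\varepsilon(\omega)}\sim\varepsilon$ as $\varepsilon\to 0^+$ is inherited from the $\gamma_\varepsilon(\omega)\sim\varepsilon^2$ behaviour established in Lemma \ref{PertRad-multi}.

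The main obstacle is essentially absent at this stage, precisely because every nontrivial analytic difficulty --- the Stratonovich-to-random-PDE conversion yielding \eqref{CCBF-multi}, the control of the exponential factors $e^{\varepsilon\z(\vartheta_t\omega)}-1$ through Young's inequality, the uniform $\V$-bound \eqref{M-uni} valid for the whole range $r\ge 1$, and the use of the absorbing-time inequality \eqref{uni5} beyond the deterministic absorbing time $T$ --- has already been packaged into Lemma \ref{PertRad-multi}. The only point deserving care here is to verify that $\mathcal{A}_\varepsilon(\omega)$ is the random attractor for the cocycle associated with the transformed equation \eqref{CCBF-multi} (so that the invariance identity used above is legitimate); this matches the construction of random attractors for stochastic CBF equations in the references cited in the introduction, so no additional transformation between attractors is required.
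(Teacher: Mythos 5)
Your proposal is correct and follows essentially the same route as the paper: the paper proves Theorem \ref{Conver-multi} by repeating the argument of Theorem \ref{Conver-add} (invariance of the random attractor, the singleton structure of $\mathcal{A}$ forcing both semi-distances to reduce to $\|\mathbf{a}_\varepsilon(\omega)-\mathbf{a}_*\|_{\H}$, and then letting $t\to\infty$ via Lemma \ref{PertRad-multi}). Your additional remarks on temperedness of the initial datum and on the reverse semi-distance only make explicit what the paper leaves implicit.
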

	\begin{proof}
		Proof of this theorem is analogous to the proof of Theorem \ref{Conver-add}. Using Theorem \ref{PertRad-multi}, one can complete the proof.
	\end{proof}
	\begin{remark}
		It is interesting to note that in the case of multiplicative noise, the order of convergence of  random attractor towards the deterministic singleton attractor of the 2D CBF equations for $r\in[1,\infty)$ is $\e$, which is same as that of 2D NSE obtained in \cite{HCPEK}.
	\end{remark}

	\section{3D Stochastic CBF Equations with Multiplicative Noise} \label{sec6}\setcounter{equation}{0}
	Let us consider the 3D stochastic convective Brinkman-Forchheimer equations perturbed by multiplicative white noise as follows:
	\begin{equation}\label{3SCBF-multi}
	\left\{
	\begin{aligned}
	\frac{\d\u_{\varepsilon}(t)}{\d t}+\mu \A\u_{\varepsilon}(t)+\B(\u_{\varepsilon}(t)) +\beta\mathcal{C}(\u_{\varepsilon}(t))&=\f +\varepsilon \u_{\varepsilon}(t)\circ\frac{\d \W(t)}{\d t} , \quad t\geq 0, \\ 
	\u_{\varepsilon}(0)&=\x,
	\end{aligned}
	\right.
	\end{equation}
	with periodic boundary conditions defined in section \ref{sec2}, where $\f\in \H$. 
	
	Let us define $\v_{\varepsilon}(t, \omega, \v_{\varepsilon}(0))=e^{-\varepsilon\z(\vartheta_{t}\omega)}\u_{\varepsilon}(t, \omega, \u_{\varepsilon}(0))$ with $\v_{\varepsilon}(0)=e^{-\varepsilon\z(\omega)}\u_{\varepsilon}(0)$ where $\z$ is defined in \eqref{OU1} and satisfies \eqref{OU2}, and $\u_{\varepsilon}(\cdot)$ is the solution of \eqref{3SCBF-multi}. Then $\v_{\varepsilon}$ will satisfy the following:
	\begin{equation}\label{3CCBF-multi}
	\left\{
	\begin{aligned}
	\frac{\d\v_{\varepsilon}(t)}{\d t}+\mu \A\v_{\varepsilon}(t)&+e^{\varepsilon\z(\vartheta_{t}\omega)}\B\big(\v_{\varepsilon}(t)\big) +\beta e^{\varepsilon(r-1)\z(\vartheta_{t}\omega)}\mathcal{C}\big(\v_{\varepsilon}(t)\big)\\&=\f e^{-\varepsilon\z(\vartheta_{t}\omega)} + \varepsilon\alpha\z(\vartheta_t\omega)\v_{\varepsilon} , \ t\geq 0, \\ 
	\v_{\varepsilon}(0)&=e^{-\varepsilon\z(\omega)}\u_{\varepsilon}(0).
	\end{aligned}
	\right.
	\end{equation}
	\subsection{Uniform estimates of solutions}
	The following lemma is useful for our main result of this section.
	\begin{lemma}\label{3D-uniq}
		For $n=3 \text{ and } r\geq3  \ (\text{for } r>3 \text{ with any } \beta,\mu>0\text{ and for } r=3 \text{ with } 2\beta\mu\geq1)$, let $\f\in \H$. Then for any $\varepsilon\in (0,1]$, the solution $\v_{\varepsilon}$ of \eqref{CCBF-multi} with initial data $\v_{\varepsilon}(0)\in \H$ satisfies:
		\begin{itemize}
			\item [(i)] For $r>3$ with any $\beta,\mu>0$, 	\begin{align}\label{3M-uni}
			&\|\v_{\varepsilon}(s,\vartheta_{-t}\omega,\v_{\varepsilon}(0))\|^2_{\V}\nonumber\\&\leq Ce^{-\frac{\varrho_1}{8}s+\int_{-t}^{s-t}2\varepsilon\alpha\z(\vartheta_{\upeta}\omega)\d \upeta}\|\v_{\varepsilon}(0)\|^2_{\H} + Ce^{\frac{\varrho_1}{8}(t-s)} \int_{-t}^{s-t}e^{-2\varepsilon\z(\vartheta_{\upeta}\omega)+\frac{\varrho_1}{8}\upeta-\int_{s-t}^{\upeta}2\varepsilon\alpha\z(\vartheta_{\zeta}\omega)\d \zeta}\d \upeta,  
			\end{align}
			for $t\geq s>1,$ where $\varrho_1$ is the constant given by \eqref{3D-C_2}. 
			\item [(ii)] For $r=3$ with $2\beta\mu\geq1$, 	\begin{align}\label{3M-uni*}
			&\|\v_{\varepsilon}(s,\vartheta_{-t}\omega,\v_{\varepsilon}(0))\|^2_{\V}\nonumber\\&\leq Ce^{-\frac{\varrho_2}{8}s+\int_{-t}^{s-t}2\varepsilon\alpha\z(\vartheta_{\upeta}\omega)\d \upeta}\|\v_{\varepsilon}(0)\|^2_{\H} + Ce^{\frac{\varrho_2}{8}(t-s)} \int_{-t}^{s-t}e^{-2\varepsilon\z(\vartheta_{\upeta}\omega)+\frac{\varrho_2}{8}\upeta-\int_{s-t}^{\upeta}2\varepsilon\alpha\z(\vartheta_{\zeta}\omega)\d \zeta}\d \upeta,  
			\end{align}
			for $t\geq s>1,$	where $\varrho_2$ is the constant given by \eqref{3D-C_4}.
		\end{itemize} 
		
	\end{lemma}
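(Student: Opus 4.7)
My plan is to mirror the strategy used in Section~\ref{sec5} for the 2D multiplicative case (the derivation of \eqref{M-uni}), combined with the damping-vs-convection estimates from Lemma~\ref{3d-ab} of the 3D deterministic analysis. Concretely, I would first derive an $\H$-estimate by testing \eqref{3CCBF-multi} against $\v_\varepsilon$, then a $\V$-estimate by testing against $\A\v_\varepsilon$, and finally combine them through the variation of constants formula and a double-integration trick (averaging over $\uprho\in(s-1,s)$) exactly as in \eqref{M-uni4}--\eqref{M-uni9}. Since $\varrho_j<\mu\lambda_1$ for $j=1,2$ by the definitions \eqref{3D-C_2} and \eqref{3D-C_4}, the absorption constants appearing in the exponent can be replaced by $\varrho_j/8$ throughout.

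For the $\H$-estimate, taking the inner product of \eqref{3CCBF-multi} with $\v_\varepsilon$ kills the convective term because $b(\v_\varepsilon,\v_\varepsilon,\v_\varepsilon)=0$, the damping term $\beta e^{\varepsilon(r-1)\z(\vartheta_t\omega)}\|\v_\varepsilon\|^{r+1}_{\wi\L^{r+1}}$ is nonnegative, and the remaining terms are handled by Cauchy--Schwarz and Young exactly as in \eqref{M-uni1}--\eqref{M-uni3}; this yields an analogue of \eqref{M-uni5} with rate $\varrho_j/8$. For the $\V$-estimate I test against $\A\v_\varepsilon$ and obtain
\begin{align*}
\tfrac{1}{2}\tfrac{\d}{\d t}\|\v_\varepsilon\|_\V^2 +\mu\|\A\v_\varepsilon\|_\H^2 + e^{\varepsilon\z(\vartheta_t\omega)}(\B(\v_\varepsilon),\A\v_\varepsilon) +\beta e^{\varepsilon(r-1)\z(\vartheta_t\omega)}(\mathcal{C}(\v_\varepsilon),\A\v_\varepsilon) = e^{-\varepsilon\z(\vartheta_t\omega)}(\f,\A\v_\varepsilon) + \varepsilon\alpha\z(\vartheta_t\omega)\|\v_\varepsilon\|_\V^2,
\end{align*}
and then I would bound the convective term by the damping term in the two cases:

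\textbf{Case $r>3$.} Starting from $e^{\varepsilon\z}\|\, |\v_\varepsilon|\,|\nabla\v_\varepsilon|\,\|_\H\|\A\v_\varepsilon\|_\H$, I use Young to peel off $\tfrac{\mu}{4}\|\A\v_\varepsilon\|_\H^2$ and then, following \eqref{3d-ab13}, split
$\|\,|\v_\varepsilon||\nabla\v_\varepsilon|\|_\H^2\le A^{2/(r-1)}B^{(r-3)/(r-1)}$ with $A=\int|\v_\varepsilon|^{r-1}|\nabla\v_\varepsilon|^2\d x$, $B=\|\v_\varepsilon\|_\V^2$, and apply Young's inequality with a parameter chosen as $\left(\tfrac{\beta\mu(r-1)}{4}\right)^{2/(r-1)}e^{2\varepsilon(r-3)\z/(r-1)}$; the $e^{\pm2\varepsilon\z}$ weights then telescope so that the $A$-piece is absorbed into $\tfrac{\beta}{2}e^{\varepsilon(r-1)\z}\|\,|\nabla\v_\varepsilon||\v_\varepsilon|^{(r-1)/2}\,\|_\H^2$ while the $B$-piece gives exactly $\eta_3\|\v_\varepsilon\|_\V^2$ with $\eta_3$ as in \eqref{215}, independent of $\omega$.

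\textbf{Case $r=3$, $2\beta\mu\ge 1$.} I use $e^{\varepsilon\z}\|\,|\v_\varepsilon||\nabla\v_\varepsilon|\,\|_\H\|\A\v_\varepsilon\|_\H\le \tfrac{1}{4\beta}\|\A\v_\varepsilon\|_\H^2 + \beta e^{2\varepsilon\z}\|\,|\v_\varepsilon||\nabla\v_\varepsilon|\,\|_\H^2$, where the second term is absorbed into $\beta e^{2\varepsilon\z}(\mathcal{C}(\v_\varepsilon),\A\v_\varepsilon)$ by \eqref{3d-ab19}, and $\tfrac{1}{4\beta}\le\tfrac{\mu}{2}$ is absorbed into $\mu\|\A\v_\varepsilon\|_\H^2$.

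Combining these with the forcing estimate $|(\f,\A\v_\varepsilon)|\le \tfrac{\mu}{4}\|\A\v_\varepsilon\|_\H^2+\tfrac{e^{-2\varepsilon\z}}{\mu}\|\f\|_\H^2$ and invoking \eqref{poin_1}, the Poincar\'e step yields a differential inequality of the form $\tfrac{\d}{\d t}\|\v_\varepsilon\|_\V^2 +(\tfrac{\varrho_j}{8}-2\varepsilon\alpha\z(\vartheta_t\omega))\|\v_\varepsilon\|_\V^2 \le C e^{-2\varepsilon\z(\vartheta_t\omega)}\|\f\|_\H^2+C,$ identical in structure to the 2D one preceding \eqref{M-uni9}. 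Variation of constants and averaging over $\uprho\in(s-1,s)$, then plugging in the $\H$-bound analogue of \eqref{M-uni5} for the resulting $\|\v_\varepsilon(\uprho,\vartheta_{-t}\omega)\|_\V^2$ integrand (after a Cauchy--Schwarz-type absorption of $\v_\varepsilon\mapsto\A\v_\varepsilon$), deliver \eqref{3M-uni} and \eqref{3M-uni*}. The main obstacle I expect is the bookkeeping of the exponential factors $e^{\pm\varepsilon k\z(\vartheta_t\omega)}$ through the Young-inequality splitting for $r>3$: the parameter must be tuned so that the damping weight $e^{\varepsilon(r-1)\z}$ on one side matches the $e^{2\varepsilon\z}$ weight coming from the convective bound; this is possible precisely because $(r-1)\cdot\tfrac{2}{r-1}+(r-3)\cdot\tfrac{-2}{r-3}\cdot\tfrac{-1}{1}$ balances to give a constant $\eta_3$ free of $\omega$, exactly as in the deterministic analysis.
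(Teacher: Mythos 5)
Your overall route is the paper's: replicate the 2D multiplicative argument of Section \ref{sec5} for the $\H$-level bound, test against $\A\v_{\varepsilon}$ for the $\V$-level bound, control the convective term by the damping as in \eqref{3d-ab14} and \eqref{3d-ab18}--\eqref{3d-ab19}, and finish with variation of constants plus the averaging over $\uprho\in(s-1,s)$. You also correctly identify, and correctly resolve, the one genuinely delicate point for $r>3$: tuning the Young parameter so that the weight $e^{2\varepsilon\z(\vartheta_t\omega)}$ from the convective bound is converted into the damping weight $e^{\varepsilon(r-1)\z(\vartheta_t\omega)}$ on the $A$-piece while the $B$-piece comes out with the $\omega$-independent constant $\eta_3$ of \eqref{215}. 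This matches \eqref{3M-uni8} exactly.

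There is, however, one step that fails as written. In the case $r>3$ you claim the Poincar\'e step yields
\begin{align*}
\frac{\d}{\d t}\|\v_{\varepsilon}(t)\|^2_{\V}+\Big(\tfrac{\varrho_1}{8}-2\varepsilon\alpha\z(\vartheta_t\omega)\Big)\|\v_{\varepsilon}(t)\|^2_{\V}\leq Ce^{-2\varepsilon\z(\vartheta_t\omega)}\|\f\|^2_{\H}+C,
\end{align*}
``identical in structure to the 2D one.'' It is not: the convective estimate leaves the extra term $2\eta_3\|\v_{\varepsilon}(t)\|^2_{\V}$ on the right-hand side, and this cannot be absorbed into the linear damping, since that would require $\mu\lambda_1-2\eta_3\geq\varrho_1/8$, while $\eta_3=\frac{r-3}{\mu(r-1)}\big[\frac{4}{\beta\mu(r-1)}\big]^{2/(r-3)}$ can be arbitrarily large for small $\beta$ or $\mu$ and no such smallness is assumed in \eqref{3D-C_1}--\eqref{3D-C_2}. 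The paper keeps this term (see \eqref{3M-uni9} and \eqref{3M-uni11}), so that after variation of constants one is left with $2\eta_3\int_0^s e^{\int_s^{\upeta}(\varrho_1/8-2\varepsilon\alpha\z(\vartheta_{\zeta-t}\omega))\d\zeta}\|\v_{\varepsilon}(\upeta,\vartheta_{-t}\omega,\v_{\varepsilon}(0))\|^2_{\V}\d\upeta$, and this integral is then controlled by the dissipation integral $\frac{\mu}{2}\int_0^s e^{\cdots}\|\v_{\varepsilon}\|^2_{\V}\d\upeta$ already sitting on the left-hand side of the $\H$-estimate \eqref{3M-uni5}. Your parenthetical about plugging in the $\H$-bound analogue of \eqref{M-uni5} suggests you have the needed ingredient, but you apply it only to the $\uprho$-averaging term; the $2\eta_3$-term must be handled the same way, and your stated differential inequality silently discards it. With that correction the proof closes exactly as in the paper (the $r=3$ case is unaffected, since there the convective term is absorbed entirely into $\|\A\v_{\varepsilon}\|^2_{\H}$ and $(\mathcal{C}(\v_{\varepsilon}),\A\v_{\varepsilon})$ using $2\beta\mu\geq1$).
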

	\begin{proof}
		From \eqref{3D-C_2} and \eqref{3D-C_4}, it is clear that $\frac{\varrho_i}{8}<\mu\lambda_1,$ for $i\in\{1,2\}$. Therefore calculation  similar to \eqref{M-uni5} gives 
		\begin{align}\label{3M-uni5}
		&\|\v_{\varepsilon}(s,\vartheta_{-t}\omega,\v_{\varepsilon}(0))\|^2_{\H}+\frac{\mu}{2}\int_{0}^{s}e^{\int_{s}^{\upeta}\left(\frac{\varrho_i}{8}-2\varepsilon\alpha\z(\vartheta_{\zeta-t}\omega)\right)\d \zeta}\|\v_{\varepsilon}(\upeta,\vartheta_{-t}\omega,\v_{\varepsilon}(0))\|^2_{\V}\d\upeta\nonumber\\&+\beta\int_{0}^{s}e^{\varepsilon(r-1)\z(\vartheta_{\upeta-t}\omega)+\int_{s}^{\upeta}\left(\frac{\varrho_i}{8}-2\varepsilon\alpha\z(\vartheta_{\zeta-t}\omega)\right)\d \zeta}\|\v_{\varepsilon}(\upeta,\vartheta_{-t}\omega,\v_{\varepsilon}(0))\|^{r+1}_{\wi \L^{r+1}}\d\upeta\nonumber\\&\leq e^{-\frac{\varrho_i}{8}s+\int_{-t}^{s-t}2\varepsilon\alpha\z(\vartheta_{\upeta}\omega)\d \upeta}\|\v_{\varepsilon}(0)\|^2_{\H} + \frac{2\|\f\|^2_{\H}}{\mu\lambda_1}e^{\frac{\varrho_i}{8}(t-s)} \int_{-t}^{s-t}e^{-2\varepsilon\z(\vartheta_{\upeta}\omega)+\frac{\varrho_i}{8}\upeta-\int_{s-t}^{\upeta}2\varepsilon\alpha\z(\vartheta_{\zeta}\omega)\d \zeta}\d \upeta,
		\end{align} for any $t\geq s\geq0$.
		Taking the inner product of the first equation in \eqref{3CCBF-multi} with $\A\v_{\varepsilon}(\cdot)$, we get 
		\begin{align}\label{3M-uni6}
		&\frac{1}{2}\frac{\d}{\d t}\|\v_{\varepsilon}(t)\|^2_{\V}+\mu\|\A\v_{\varepsilon}(t)\|^2_{\H} +\beta e^{\varepsilon(r-1)\z(\vartheta_{t}\omega)}\int_{\mathbb{T}^3}|\nabla\v_{\varepsilon}(t,x)|^2|\v_{\varepsilon}(t,x)|^{r-1}\d x\nonumber\\&\quad+4\beta e^{\varepsilon(r-1)\z(\vartheta_{t}\omega)}\left[\frac{r-1}{(r+1)^2}\right]\int_{\mathbb{T}^3}|\nabla|\v_{\varepsilon}(t,x)|^{\frac{r+1}{2}}|^2\d x\nonumber\\&=-e^{\varepsilon\z(\vartheta_{t}\omega)}\left(\B\big(\v_{\varepsilon}(t)\big), \A\v_{\varepsilon}(t)\right)+ e^{-\varepsilon\z(\vartheta_{t}\omega)}(\f,\A\v_{\varepsilon}(t)) + \varepsilon\alpha\z(\vartheta_{t}\omega)\|\v_{\varepsilon}(t)\|^2_{\V},
		\end{align}
		where we used 
		\begin{align*}
		\big(\mathcal{C}\big(\v_{\varepsilon}\big),\A\v_{\varepsilon}\big)&=\int_{\mathbb{T}^3}|\nabla\v_{\varepsilon}(x)|^2|\v_{\varepsilon}(x)|^{r-1}\d x+4\left[\frac{r-1}{(r+1)^2}\right]\int_{\mathbb{T}^3}|\nabla|\v_{\varepsilon}(x)|^{\frac{r+1}{2}}|^2\d x,
		\end{align*}
		for a.e. $t\in[0,\tau]$. Applying Cauchy-Schwarz and Young's inequalities, we obtain
		\begin{align}\label{3M-uni7}
		e^{-\varepsilon\z(\vartheta_{t}\omega)}(\f,\A\v_{\varepsilon}) \leq \frac{e^{-2\varepsilon\z(\vartheta_{t}\omega)}\|\f\|^2_{\H}}{\mu}  +\frac{\mu}{4}\|\A\v_{\varepsilon}\|^2_{\H}.
		\end{align}	
		\vskip 0.2 cm
		\noindent
		\textbf{Case I:}  $r>3$ with any $\beta,\mu>0$.
		Using similar calculations as in \eqref{3d-ab14}, we find 
		\begin{align}\label{3M-uni8}
		|e^{\varepsilon\z(\vartheta_{t}\omega)}(\B(\v_{\varepsilon}),\A\v_{\varepsilon})|&\leq\frac{\mu}{4}\|\A\v_{\varepsilon}\|_{\H}^2+\frac{\beta}{2}e^{\varepsilon(r-1)\z(\vartheta_{t}\omega)}\||\nabla\v_{\varepsilon}||\v_{\varepsilon}|^{\frac{r-1}{2}}\|^2_{\H}+\eta_3\|\v_{\varepsilon}\|^2_{\V}.
		\end{align}
		Using \eqref{3M-uni7} and \eqref{3M-uni8} in \eqref{3M-uni6}, we get 
		\begin{align}\label{3M-uni9}
		\frac{1}{2}\frac{\d}{\d t}\|\v_{\varepsilon}(t)\|^2_{\V}+\frac{\mu}{2}\|\A\v_{\varepsilon}(t)\|^2_{\H}\leq \frac{e^{-2\varepsilon\z(\vartheta_{t}\omega)}\|\f\|^2_{\H}}{\mu} + \varepsilon\alpha\z(\vartheta_{t}\omega)\|\v_{\varepsilon}(t)\|^2_{\V} +\eta_3\|\v_{\varepsilon}(t)\|^2_{\V},
		\end{align}
		for a.e. $t\in[0,\tau]$. Also, using \eqref{poin_1} in \eqref{3M-uni9}, we have 
		\begin{align*}
		\frac{\d}{\d t}\|\v_{\varepsilon}(t)\|^2_{\V}+\mu\lambda_1\|\v_{\varepsilon}(t)\|^2_{\V}&\leq \frac{2e^{-2\varepsilon\z(\vartheta_{t}\omega)}\|\f\|^2_{\H}}{\mu} + 2\varepsilon\alpha\z(\vartheta_{t}\omega)\|\v_{\varepsilon}(t)\|^2_{\V}+2\eta_3\|\v_{\varepsilon}(t)\|^2_{\V}.
		\end{align*}
		Thus, it is immediate that 
		\begin{align*}
		\frac{\d}{\d t}\|\v_{\varepsilon}(t)\|^2_{\V}+\left(\mu\lambda_1-2\varepsilon\alpha\z(\vartheta_{t}\omega)\right)\|\v_{\varepsilon}(t)\|^2_{\V}\leq \frac{2e^{-2\varepsilon\z(\vartheta_{t}\omega)}\|\f\|^2_{\H}}{\mu}+2\eta_3\|\v_{\varepsilon}(t)\|^2_{\V}.
		\end{align*}
		From \eqref{3D-C_2}, it is clear that $\frac{\varrho_1}{8}<\mu\lambda_1$,  and hence 
		\begin{align*}
		\frac{\d}{\d t}\|\v_{\varepsilon}(t)\|^2_{\V}+\left(\frac{\varrho_1}{8}-2\varepsilon\alpha\z(\vartheta_{t}\omega)\right)\|\v_{\varepsilon}(t)\|^2_{\V}\leq \frac{2e^{-2\varepsilon\z(\vartheta_{t}\omega)}\|\f\|^2_{\H}}{\mu}+2\eta_3\|\v_{\varepsilon}(t)\|^2_{\V},
		\end{align*}
		for a.e. $t\in[0,\tau]$. Using the same arguments that we used to obtain \eqref{M-uni5}, for $t\geq s\geq \uprho>0,$ we have 
		\begin{align}\label{3M-uni11}
		\|\v_{\varepsilon}(s,\vartheta_{-t}\omega,\v_{\varepsilon}(0))\|^2_{\V}&\leq e^{-\int_{\uprho}^{s}\left(\frac{\varrho_1}{8}-2\varepsilon\alpha\z(\vartheta_{\upeta-t}\omega)\right)\d \upeta}\|\v_{\varepsilon}(\uprho,\vartheta_{-t}\omega,\v_{\varepsilon}(0))\|^2_{\V}\nonumber\\&\quad + \frac{2\|\f\|^2_{\H}}{\mu} \int_{\uprho}^{s}e^{-2\varepsilon\z(\vartheta_{\upeta-t}\omega)+\int_{s}^{\upeta}\left(\frac{\varrho_1}{8}-2\varepsilon\alpha\z(\vartheta_{\zeta-t}\omega)\right)\d \zeta}\d \upeta\nonumber\\&\quad+2\eta_3\int_{\uprho}^{s}e^{\int_{s}^{\upeta}\left(\frac{\varrho_1}{8}-2\varepsilon\alpha\z(\vartheta_{\zeta-t}\omega)\right)\d \zeta}\|\v_{\varepsilon}(\upeta,\vartheta_{-t}\omega,\v_{\varepsilon}(0))\|^2_{\V}\d \upeta\nonumber\\&\leq e^{-\int_{\uprho}^{s}\left(\frac{\varrho_1}{8}-2\varepsilon\alpha\z(\vartheta_{\upeta-t}\omega)\right)\d \upeta}\|\v_{\varepsilon}(\uprho,\vartheta_{-t}\omega,\v_{\varepsilon}(0))\|^2_{\V} \nonumber\\&\quad+ \frac{2\|\f\|^2_{\H}}{\mu} \int_{0}^{s}e^{-2\varepsilon\z(\vartheta_{\upeta-t}\omega)+\int_{s}^{\upeta}\left(\frac{\varrho_1}{8}-2\varepsilon\alpha\z(\vartheta_{\zeta-t}\omega)\right)\d \zeta}\d \upeta\nonumber\\&\quad+2\eta_3\int_{0}^{s}e^{\int_{s}^{\upeta}\left(\frac{\varrho_1}{8}-2\varepsilon\alpha\z(\vartheta_{\zeta-t}\omega)\right)\d \zeta}\|\v_{\varepsilon}(\upeta,\vartheta_{-t}\omega,\v_{\varepsilon}(0))\|^2_{\V}\d \upeta.
		\end{align}
		Integrating \eqref{3M-uni11} with respect to $\uprho$ over $(s-1,s),$ for $s>1$ and using \eqref{3M-uni5}, we get 
		\begin{align}\label{3M-uni12}
		\|\v_{\varepsilon}(s,\vartheta_{-t}\omega,\v_{\varepsilon}(0))\|^2_{\V}&\leq \int_{s-1}^{s}e^{-\int_{\uprho}^{s}\left(\frac{\varrho_1}{8}-2\varepsilon\alpha\z(\vartheta_{\upeta-t}\omega)\right)\d \upeta}\|\v_{\varepsilon}(\uprho,\vartheta_{-t}\omega,\v_{\varepsilon}(0))\|^2_{\V}\d \uprho \nonumber\\&\quad+ \frac{\|\f\|^2_{\H}}{\mu} \int_{0}^{s}e^{-2\varepsilon\z(\vartheta_{\upeta-t}\omega)+\int_{s}^{\upeta}\left(\frac{\varrho_1}{8}-2\varepsilon\alpha\z(\vartheta_{\zeta-t}\omega)\right)\d \zeta}\d \upeta\nonumber\\&\quad+2\eta_3\int_{0}^{s}e^{\int_{s}^{\upeta}\left(\frac{\varrho_1}{8}-2\varepsilon\alpha\z(\vartheta_{\zeta-t}\omega)\right)\d \zeta}\|\v_{\varepsilon}(\upeta,\vartheta_{-t}\omega,\v_{\varepsilon}(0))\|^2_{\V}\d \upeta\nonumber\\&\leq Ce^{-\int_{0}^{s}\left(\frac{\varrho_1}{8}-2\varepsilon\alpha\z(\vartheta_{\upeta-t}\omega)\right)\d \upeta}\|\v_{\varepsilon}(0)\|^2_{\H} \nonumber\\&\quad+ C \int_{0}^{s}e^{-2\varepsilon\z(\vartheta_{\upeta-t}\omega)+\int_{s}^{\upeta}\left(\frac{\varrho_1}{8}-2\varepsilon\alpha\z(\vartheta_{\zeta-t}\omega)\right)\d \zeta}\d \upeta\nonumber\\&= Ce^{-\frac{\varrho_1}{8}s+\int_{-t}^{s-t}2\varepsilon\alpha\z(\vartheta_{\upeta}\omega)\d \upeta}\|\v_{\varepsilon}(0)\|^2_{\H} \nonumber\\&\quad+ Ce^{\frac{\varrho_1}{8}(t-s)} \int_{-t}^{s-t}e^{-2\varepsilon\z(\vartheta_{\upeta}\omega)+\frac{\varrho_1}{8}\upeta-\int_{s-t}^{\upeta}2\varepsilon\alpha\z(\vartheta_{\zeta}\omega)\d \zeta}\d \upeta,
		\end{align}
		for $t\geq s>1$, which completes the proof. 
		\vskip 0.2 cm
		\noindent
		\textbf{Case II:}  $r=3$ with $2\beta\mu\geq1$. Applying the Cauchy-Schwarz and Young inequalities, we find 
		\begin{align}
		|(\B(\v_{\varepsilon}),\A\v_{\varepsilon})|&\leq\||\v_{\varepsilon}||\nabla\v_{\varepsilon}|\|_{\H}\|\A\v_{\varepsilon}\|_{\H}\leq\frac{1}{4\beta}\|\A\v_{\varepsilon}\|_{\H}^2+\beta\||\v_{\varepsilon}||\nabla\v_{\varepsilon}|\|_{\H}^2.\label{3M-uni13}
		\end{align}
		Using \eqref{3M-uni7} and \eqref{3M-uni13} in \eqref{3M-uni6}, we obtain
		\begin{align}\label{3M-uni14}
		\frac{1}{2}&\frac{\d}{\d t}\|\v_{\varepsilon}(t)\|^2_{\V}+\frac{\mu}{4}\|\A\v_{\varepsilon}(t)\|^2_{\H}+\frac{1}{2}\left(\mu-\frac{1}{2\beta}\right)\|\A\v_{\varepsilon}(t)\|^2_{\H}\nonumber\\&\leq \frac{e^{-2\varepsilon\z(\vartheta_{t}\omega)}\|\f\|^2_{\H}}{\mu} + \varepsilon\alpha\z(\vartheta_{t}\omega)\|\v_{\varepsilon}(t)\|^2_{\V} ,
		\end{align}
		for a.e. $t\in[0,\tau]$. Also, using \eqref{poin_1} with \eqref{3M-uni14}, we have 
		\begin{align*}
		\frac{\d}{\d t}\|\v_{\varepsilon}(t)\|^2_{\V}+\frac{\mu\lambda_1}{2}\|\v_{\varepsilon}(t)\|^2_{\V}\leq \frac{2e^{-2\varepsilon\z(\vartheta_{t}\omega)}\|\f\|^2_{\H}}{\mu} + 2\varepsilon\alpha\z(\vartheta_{t}\omega)\|\v_{\varepsilon}(t)\|^2_{\V}.
		\end{align*}
		Therefore, we get 
		\begin{align*}
		\frac{\d}{\d t}\|\v_{\varepsilon}(t)\|^2_{\V}+\left(\frac{\mu\lambda_1}{2}-2\varepsilon\alpha\z(\vartheta_{t}\omega)\right)\|\v_{\varepsilon}(t)\|^2_{\V}\leq \frac{2e^{-2\varepsilon\z(\vartheta_{t}\omega)}\|\f\|^2_{\H}}{\mu}. 
		\end{align*}
		From \eqref{3D-C_4}, it is clear that $\frac{\varrho_2}{4}<\mu\lambda_1$, so that 
		\begin{align*}
		\frac{\d}{\d t}\|\v_{\varepsilon}(t)\|^2_{\V}+\left(\frac{\varrho_2}{8}-2\varepsilon\alpha\z(\vartheta_{t}\omega)\right)\|\v_{\varepsilon}(t)\|^2_{\V}\leq \frac{2e^{-2\varepsilon\z(\vartheta_{t}\omega)}\|\f\|^2_{\H}}{\mu},\ \text{ for a.e. }\ t\in[0,\tau] ,
		\end{align*}
		Now, using the same argument as in the case of $r>3$, one can conclude the proof.
	\end{proof}
	\subsection{Perturbation radius of the singleton attractor under multiplicative noise}
	Let us now consider the difference $\w_{\varepsilon}=\v_{\varepsilon}-\u$, where $\v_{\varepsilon}$ and $\u$ are the solutions of \eqref{3CCBF-multi} and \eqref{D-CBF}, respectively. It is obvious that $\w_{\varepsilon}$ satisfies 
	\begin{equation}\label{3Diff-CBF_multi}
	\left\{
	\begin{aligned}
	\frac{\d\w_{\varepsilon}}{\d t}&=-\mu \A\w_{\varepsilon}-e^{\varepsilon\z(\vartheta_{t}\omega)}\B(\v_{\varepsilon})+\B(\u) -\beta e^{\varepsilon(r-1)\z(\vartheta_{t}\omega)}\mathcal{C}(\v_{\varepsilon})+\beta\mathcal{C}(\u)\\&\quad+ \left(e^{-\varepsilon\z(\vartheta_{t}\omega)}-1\right)\f+ \varepsilon\alpha\z(\vartheta_t\omega)\v_{\varepsilon} , \\ 
	\w_{\varepsilon}(0)&=(e^{-\varepsilon\z(\vartheta_{t}\omega)}-1)\x,
	\end{aligned}
	\right.
	\end{equation}
	in $\V'$. Due to technical difficulties, in  the 3D case, we restrict ourselves to $3\leq r\leq5$. 
	\begin{lemma}\label{3PertRad-multi}
		For $n=3 \text{ and } 3\leq r\leq5  \ (\text{for } 3<r\leq5 \text{ with any } \beta,\mu>0\text{ and for } r=3 \text{ with } 2\beta\mu\geq1)$, let $\f\in \H$. Then for any $\varepsilon\in(0,1]$, there exists a random variable $\gamma^{*}_{\varepsilon}(\omega)$ such that 
		\begin{align*}
		\limsup_{t\to \infty} \|\w_{\varepsilon}(t,\vartheta_{-t}\omega,\w_{\varepsilon}(0))\|^2_{\H}\leq \gamma^{*}_{\varepsilon}(\omega),\quad \varepsilon\in(0,1],\  \omega\in \Omega.
		\end{align*}
		Moreover, as a mapping of $\varepsilon$, $\gamma^{*}_{\varepsilon}(\omega)\sim\varepsilon^2$ as $\varepsilon\to0$, for every $\omega\in\Omega$.
	\end{lemma}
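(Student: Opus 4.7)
The plan is to mirror the strategy of Lemma~\ref{PertRad-multi}, replacing the 2D ingredients by their 3D counterparts: the trilinear estimate \eqref{b3} in place of \eqref{b1}, Lemma~\ref{3d-ab} and its attractor-uniqueness consequence \eqref{3d-uni5}/\eqref{3d-uni7} in place of \eqref{uni5}, and Lemma~\ref{3D-uniq} in place of \eqref{M-uni}. I would begin by pairing \eqref{3Diff-CBF_multi} with $\w_{\varepsilon}$ in $\H$, using $b(\u,\w_{\varepsilon},\w_{\varepsilon})=0$ and $b(\v_{\varepsilon},\w_{\varepsilon},\w_{\varepsilon})=0$ to rewrite the convective contribution as
\[
-(e^{\varepsilon\z(\vartheta_t\omega)}-1)\,b(\v_{\varepsilon},\u,\w_{\varepsilon})-b(\w_{\varepsilon},\u,\w_{\varepsilon}),
\]
and using \eqref{MO_c} to discard $-\beta e^{\varepsilon(r-1)\z(\vartheta_t\omega)}\langle\mathcal C(\v_{\varepsilon})-\mathcal C(\u),\w_{\varepsilon}\rangle\le 0$ while retaining the positive term $\frac{\beta}{2}e^{\varepsilon(r-1)\z(\vartheta_t\omega)}\||\u|^{(r-1)/2}\w_{\varepsilon}\|_{\H}^{2}$ on the left-hand side for later absorption.

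For the convective part, I would apply \eqref{b3} to obtain
\[
|b(\w_{\varepsilon},\u,\w_{\varepsilon})|\le c_{3}\|\w_{\varepsilon}\|_{\H}^{1/2}\|\w_{\varepsilon}\|_{\V}^{3/2}\|\u\|_{\V}\le \tfrac{\mu}{8}\|\w_{\varepsilon}\|_{\V}^{2}+\tfrac{27c_{3}^{4}}{32\mu^{3}}\|\w_{\varepsilon}\|_{\H}^{2}\|\u\|_{\V}^{4},
\]
and analogously $|e^{\varepsilon\z(\vartheta_t\omega)}-1|\,|b(\v_{\varepsilon},\u,\w_{\varepsilon})|\le \tfrac{\mu}{8}\|\w_{\varepsilon}\|_{\V}^{2}+C|e^{\varepsilon\z(\vartheta_t\omega)}-1|^{2}\|\v_{\varepsilon}\|_{\V}^{2}\|\u\|_{\V}^{2}$. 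For the nonlinear damping correction, the key identity is
\[
|\langle\mathcal C(\u),\w_{\varepsilon}\rangle|\le \|\u\|_{\widetilde{\L}^{r+1}}^{(r+1)/2}\,\||\u|^{(r-1)/2}\w_{\varepsilon}\|_{\H},
\]
followed by Young's inequality to absorb the factor $\||\u|^{(r-1)/2}\w_{\varepsilon}\|_{\H}$ into the term retained from \eqref{MO_c}, leaving $C|e^{\varepsilon(r-1)\z(\vartheta_t\omega)}-1|^{2}\|\u\|_{\widetilde{\L}^{r+1}}^{r+1}$. Here the Sobolev embedding $\V\hookrightarrow\widetilde{\L}^{6}$ (valid in 3D) gives $\|\u\|_{\widetilde{\L}^{r+1}}\le C\|\u\|_{\V}$ precisely when $r+1\le 6$, which is exactly what forces the constraint $r\le 5$. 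The forcing-correction term $(e^{-\varepsilon\z(\vartheta_t\omega)}-1)(\f,\w_{\varepsilon})$ and the Ornstein--Uhlenbeck correction $\varepsilon\alpha\z(\vartheta_t\omega)(\v_{\varepsilon},\w_{\varepsilon})$ are treated by Cauchy--Schwarz and Young as in \eqref{W-multi5}, yielding $O(\varepsilon^{2})$ driving terms.

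Collecting everything and using Poincar\'e, I would arrive at a differential inequality of the form
\[
\frac{\d}{\d t}\|\w_{\varepsilon}(t)\|_{\H}^{2}+\Big(\mu\lambda_{1}-\tfrac{27c_{3}^{4}}{16\mu^{3}}\|\u(t)\|_{\V}^{4}-\tfrac{\varrho_{i}}{4}-2\varepsilon\alpha\z(\vartheta_{t}\omega)\Big)\|\w_{\varepsilon}(t)\|_{\H}^{2}\le\mathcal R_{\varepsilon}(t,\omega),
\]
where $i=1$ for $r>3$ and $i=2$ for $r=3$, and $\mathcal R_{\varepsilon}(t,\omega)$ gathers the $O(\varepsilon^{2})$ contributions $\varepsilon^{2}C|\z(\vartheta_{t}\omega)|^{2}$, $C|e^{\varepsilon\z(\vartheta_t\omega)}-1|^{2}\|\v_{\varepsilon}\|_{\V}^{2}$, $C|e^{\varepsilon(r-1)\z(\vartheta_t\omega)}-1|^{2}$ and $C|e^{-\varepsilon\z(\vartheta_t\omega)}-1|^{2}$. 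Invoking Lemma~\ref{3d-ab} to find a deterministic time $T$ with $\mu\lambda_{1}-\tfrac{27c_{3}^{4}}{16\mu^{3}}\|\u(t)\|_{\V}^{4}\ge \tfrac{\varrho_{i}}{2}$ for $t\ge T$ (which is where the small-Grashof conditions \eqref{3D-C_1}, \eqref{3D-C_3} enter), the coefficient of $\|\w_{\varepsilon}\|_{\H}^{2}$ exceeds $\tfrac{\varrho_{i}}{4}-2\varepsilon\alpha\z(\vartheta_t\omega)$, and the variation-of-constants formula combined with Lemma~\ref{3D-uniq} and an appropriately large $\alpha$ (so that $\mathbb E(2|\z|)<\varrho_{i}/8$ via \eqref{Z2}) allows one to pass to $\omega\mapsto\vartheta_{-t}\omega$ and let $t\to\infty$, exactly as in \eqref{W-multi8}--\eqref{W-multi12}.

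The main obstacle is the convective term $b(\w_{\varepsilon},\u,\w_{\varepsilon})$: in 3D the only admissible estimate \eqref{b3} produces a $\|\u\|_{\V}^{4}$-weighted Gronwall coefficient rather than the $\|\u\|_{\V}^{2}$ of the 2D case, so the smallness condition on the forcing has to control the quartic quantity along the deterministic trajectory, which is exactly what Lemma~\ref{3d-ab} together with \eqref{3D-C_2} or \eqref{3D-C_4} delivers. Aside from that, everything else goes through with the same bookkeeping as in Lemma~\ref{PertRad-multi}, and the final random variable $\gamma^{*}_{\varepsilon}(\omega)$ inherits the $\varepsilon^{2}$ scaling from $|e^{\pm\varepsilon\z}-1|^{2}=O(\varepsilon^{2})$ and $\varepsilon^{2}|\z|^{2}$, giving the desired conclusion.
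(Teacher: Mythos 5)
Your proposal is correct and follows essentially the same route as the paper: the same decomposition of the convective terms via \eqref{b3}, the same use of \eqref{MO_c}, the same quartic Gronwall coefficient controlled through Lemma \ref{3d-ab} and the smallness conditions \eqref{3D-C_2}/\eqref{3D-C_4}, and the same passage to the limit via Lemma \ref{3D-uniq} as in \eqref{W-multi8}--\eqref{W-multi12}. The only (harmless) deviation is in the term $\beta(e^{\varepsilon(r-1)\z(\vartheta_t\omega)}-1)\langle\mathcal{C}(\u),\w_{\varepsilon}\rangle$, which the paper bounds by $C|e^{\varepsilon(r-1)\z(\vartheta_t\omega)}-1|^2\|\u\|_{\V}^{2r}+\frac{\varrho_i}{32\lambda_1}\|\w_{\varepsilon}\|_{\V}^2$ using $\V\hookrightarrow\widetilde{\L}^{r+1}$ on both factors, whereas you absorb it into the monotonicity remainder $\frac{\beta}{2}e^{\varepsilon(r-1)\z(\vartheta_t\omega)}\||\u|^{(r-1)/2}\w_{\varepsilon}\|_{\H}^2$; both variants need $r\le 5$ and yield the same $O(\varepsilon^2)$ driving term.
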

	\begin{proof}
		Taking the inner product of first the  equation in \eqref{3Diff-CBF_multi} with $\w_{\varepsilon}(t)$, we get (see \eqref{W-multi1})
		\begin{align}\label{3W-multi1}
		&\frac{1}{2}\frac{\d}{\d t}\|\w_{\varepsilon}(t)\|^2_{\H} \nonumber\\&=- \mu \|\w_{\varepsilon}(t)\|^2_{\V} -\left[(e^{\varepsilon\z(\vartheta_{t}\omega)}-1)b(\v_{\varepsilon}(t),\v_{\varepsilon}(t),\w_{\varepsilon}(t))+b(\w_{\varepsilon}(t),\v_{\varepsilon}(t),\w_{\varepsilon}(t))\right]\nonumber\\&\quad-\beta e^{\varepsilon(r-1)\z(\vartheta_{t}\omega)}\left\langle\mathcal{C}(\v_{\varepsilon}(t))-\mathcal{C}(\u(t)),\v_{\varepsilon}(t)-\u(t)\right\rangle-\beta(e^{\varepsilon(r-1)\z(\vartheta_{t}\omega)}-1)\left\langle\mathcal{C}(\u(t)),\w_{\varepsilon}(t)\right\rangle\nonumber\\&\quad+\left(e^{-\varepsilon\z(\vartheta_{t}\omega)}-1\right)(\f,\w_{\varepsilon}(t))+\varepsilon\alpha\z(\vartheta_t\omega)\|\w_{\varepsilon}(t)\|^2_{\H}+ (\varepsilon\alpha\z(\vartheta_t\omega)\u(t),\w_{\varepsilon}(t)),
		\end{align}
		for a.e. $t\in[0,\tau]$. Now, we estimate the each term of the right side of the equality \eqref{3W-multi1} carefully to get our required estimate. Using \eqref{b3}, \eqref{poin} and Young's inequality, we have 
		\begin{align}\label{3W-multi2}
		&|(e^{\varepsilon\z(\vartheta_{t}\omega)}-1)b(\v_{\varepsilon},\v_{\varepsilon},\w_{\varepsilon})+b(\w_{\varepsilon},\v_{\varepsilon},\w_{\varepsilon})|\nonumber\\&=|(e^{\varepsilon\z(\vartheta_{t}\omega)}-1)b(\v_{\varepsilon},\u,\w_{\varepsilon})+b(\w_{\varepsilon},\u,\w_{\varepsilon})|\nonumber\\&\leq C|e^{\varepsilon\z(\vartheta_{t}\omega)}-1|\|\v_{\varepsilon}\|_{\V}\|\u\|_{\V}\|\w_{\varepsilon}\|_{\V}+c_3\|\u\|_{\V}\|\w_{\varepsilon}\|^{1/2}_{\H}\|\w_{\varepsilon}\|^{3/2}_{\V}\nonumber\\&\leq C|e^{\varepsilon\z(\vartheta_{t}\omega)}-1|^2\|\v_{\varepsilon}\|^2_{\V}\|\u\|^2_{\V}+\frac{\varrho_i}{32\lambda_1}\|\w_{\varepsilon}\|^2_{\V} +\frac{27c_3^4}{32\mu^3}\|\u\|^4_{\V}\|\w_{\varepsilon}\|^2_{\H} +\frac{\mu}{2}\|\w_{\varepsilon}\|^2_{\V},
		\end{align}
		where $\rho_1$ and $\rho_2$ are defined in \eqref{3D-C_2} (for $r>3$) and \eqref{3D-C_4} (for $r=3$), respectively.	From \eqref{MO_c}, we have 
		\begin{align}\label{3W-multi3}
		-\beta e^{\varepsilon(r-1)\z(\vartheta_{t}\omega)}\left\langle\mathcal{C}(\v_{\varepsilon})-\mathcal{C}(\u),\v_{\varepsilon}-\u\right\rangle\leq 0.
		\end{align}
		Applying H\"older's and Young's inequalities, and Sobolev embedding (since $3\leq r\leq 5$), we obtain
		\begin{align}\label{3W-multi4}
		\left|(e^{\varepsilon(r-1)\z(\vartheta_{t}\omega)}-1)\left\langle\mathcal{C}(\u),\w_{\varepsilon}\right\rangle\right|&\leq\left|e^{\varepsilon(r-1)\z(\vartheta_{t}\omega)}-1\right|\|\u\|^{r}_{\wi\L^{r+1}}\|\w_{\varepsilon}\|_{\wi \L^{r+1}}\nonumber\\&\leq C\left|e^{\varepsilon(r-1)\z(\vartheta_{t}\omega)}-1\right|\|\u\|^{r}_{\V}\|\w_{\varepsilon}\|_{\V}\nonumber\\&\leq C\left|e^{\varepsilon(r-1)\z(\vartheta_{t}\omega)}-1\right|^2\|\u\|^{2r}_{\V}+\frac{\varrho_i}{32\lambda_1}\|\w_{\varepsilon}\|^2_{\V},
		\end{align}
		and 
		\begin{align}\label{3W-multi5}
		&\left(e^{-\varepsilon\z(\vartheta_{t}\omega)}-1\right)(\f,\w_{\varepsilon})+\varepsilon\alpha\z(\vartheta_t\omega)\|\w_{\varepsilon}\|^2_{\H}+ (\varepsilon\alpha\z(\vartheta_t\omega)\u,\w_{\varepsilon})\nonumber\\&\leq C\left|e^{-\varepsilon\z(\vartheta_{t}\omega)}-1\right|^2+\left(\frac{\varrho_i}{16}+\varepsilon\alpha\z(\vartheta_{t}\omega)\right)\|\w_{\varepsilon}\|^2_{\H}+\varepsilon^2C\left|\z(\vartheta_{t}\omega)\right|^2\|\u\|^2_{\H}.
		\end{align}
		Combining \eqref{3W-multi2}-\eqref{3W-multi5} and using it in \eqref{3W-multi1}, we deduce that 
		\begin{align*}
		&\frac{1}{2}\frac{\d}{\d t}\|\w_{\varepsilon}(t)\|^2_{\H}+\left(\frac{\mu}{2}-\frac{\varrho_i}{16\lambda_1}\right)\|\w_{\varepsilon}(t)\|^2_{\V}-\left(\frac{\varrho_i}{16}+\varepsilon\alpha\z(\vartheta_{t}\omega)+\frac{27c_3^4}{32\mu^3}\|\u(t)\|^4_{\V}\right)\|\w_{\varepsilon}(t)\|^2_{\H}\nonumber\\&\leq\varepsilon^2C\left|\z(\vartheta_{t}\omega)\right|^2\|\u(t)\|^2_{\H}+	C|e^{\varepsilon\z(\vartheta_{t}\omega)}-1|^2\|\v_{\varepsilon}(t)\|^2_{\V}\|\u(t)\|^2_{\V}  +C\left|e^{\varepsilon(r-1)\z(\vartheta_{t}\omega)}-1\right|^2\|\u(t)\|^{2r}_{\V}\nonumber\\&\quad+C\left|e^{-\varepsilon\z(\vartheta_{t}\omega)}-1\right|^2,
		\end{align*}
		for a.e. $t\in[0,\tau]$. By \eqref{poin}, we have
		\begin{align}\label{3W-multi6}
		&\frac{\d}{\d t}\|\w_{\varepsilon}(t)\|^2_{\H}+\left(\mu\lambda_1-\frac{27c_3^4}{16\mu^3}\|\u(t)\|^4_{\V}-\frac{\varrho_i}{4}-2\varepsilon\alpha\z(\vartheta_{t}\omega)\right)\|\w_{\varepsilon}(t)\|^2_{\H}\nonumber\\&\leq\varepsilon^2C\left|\z(\vartheta_{t}\omega)\right|^2\|\u(t)\|^2_{\H}+	C|e^{\varepsilon\z(\vartheta_{t}\omega)}-1|^2\|\v_{\varepsilon}(t)\|^2_{\V}\|\u(t)\|^2_{\V}  +C\left|e^{\varepsilon(r-1)\z(\vartheta_{t}\omega)}-1\right|^2\|\u(t)\|^{2r}_{\V}\nonumber\\&\quad+C\left|e^{-\varepsilon\z(\vartheta_{t}\omega)}-1\right|^2.
		\end{align}
		Now, consider again a time $T_i=T_{B_{\H},\frac{8\varrho_i\mu^3}{27c_3^4}}>0$ such that \eqref{3d-uni5} and \eqref{3d-uni7} hold for $i=1$ and $i=2$ respectively, that is,
		\begin{align*}
		\mu\lambda_1-\frac{27c_3^4}{16\mu^3}\|\u\|^4_{\V}\geq\frac{\varrho_i}{2}, \ \text{ for all }\  t\geq T_i.
		\end{align*}
		It follows from \eqref{3W-multi6} that 
		\begin{align}\label{3W-multi7}
		&\frac{\d}{\d t}\|\w_{\varepsilon}\|^2_{\H}+\left(\frac{\varrho_i}{4}-2\varepsilon\alpha\z(\vartheta_{t}\omega)\right)\|\w_{\varepsilon}\|^2_{\H}\nonumber\\&\leq\varepsilon^2C\left|\z(\vartheta_{t}\omega)\right|^2+	C|e^{\varepsilon\z(\vartheta_{t}\omega)}-1|^2\|\v_{\varepsilon}\|^2_{\V} +C\left|e^{\varepsilon(r-1)\z(\vartheta_{t}\omega)}-1\right|^2+C\left|e^{-\varepsilon\z(\vartheta_{t}\omega)}-1\right|^2, 
		\end{align}
		for a.e. $t\in[T_i,\tau]$. Note that the situation of \eqref{3W-multi7} is same as that of \eqref{W-multi7}, therefore one can conclude the proof in a similar manner using Lemma \ref{3D-uniq}. 
	\end{proof}
	\begin{theorem}\label{3D-Conver-multi}
		For $n=3 \text{ and } 3\leq r\leq5  \ (\text{for } 3<r\leq5 \text{ with any } \beta,\mu>0\text{ and for } r=3 \text{ with } 2\beta\mu\geq1)$, let $\f\in\H$. Then, there exists a random variable $\gamma^{*}_{\varepsilon}(\omega)$ such that for every $\varepsilon\in(0,1],$ the multiplicative noise random attractor $\mathcal{A}_{\varepsilon}$ and the deterministic attractor $\mathcal{A}$ satisfy 
		\begin{align*}
		\emph{dist}_{\H}\left(\mathcal{A}_{\varepsilon}(\omega),\mathcal{A}\right)\leq\sqrt{\gamma^{*}_{\varepsilon}(\omega)},
		\end{align*}
		where $\emph{dist}_{\H}(A,B)=\max\{\emph{dist}(A,B),\emph{dist}(B,A)\}$. Moreover, as a mapping of $\varepsilon$, $\sqrt{\gamma^{*}_{\varepsilon}(\omega)}\sim\varepsilon$ as $\varepsilon\to0$ for every $\omega\in\Omega$.
	\end{theorem}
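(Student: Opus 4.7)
The plan is to mirror the proof of Theorem \ref{Conver-multi}, replacing the 2D inputs with their 3D counterparts: the deterministic singleton attractor from Theorem \ref{3D-SA} and the perturbation-radius estimate from Lemma \ref{3PertRad-multi}. Everything else is structural, so no new calculus should be required — the heavy lifting has already been done in Lemma \ref{3PertRad-multi}.

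First I would fix an arbitrary $\mathbf{a}_{\varepsilon}(\omega)\in\mathcal{A}_{\varepsilon}(\omega)$. By the invariance of the random attractor under the cocycle generated by the transformed system \eqref{3CCBF-multi} (note that $\mathbf{v}_{\varepsilon}$ and $\mathbf{u}_{\varepsilon}$ differ only by the multiplicative factor $e^{\varepsilon\z(\vartheta_t\omega)}$, so their attractors are conjugate in a trivial way and the invariance carries over), one has, for every $t\geq 0$,
\begin{align*}
\mathbf{a}_{\varepsilon}(\omega)=\mathbf{v}_{\varepsilon}\bigl(t,\vartheta_{-t}\omega,\tilde{\mathbf{a}}_{\varepsilon}(\vartheta_{-t}\omega)\bigr),\qquad \tilde{\mathbf{a}}_{\varepsilon}(\vartheta_{-t}\omega)\in\mathcal{A}_{\varepsilon}(\vartheta_{-t}\omega).
\end{align*}
Since $\mathcal{A}_{\varepsilon}\in\mathfrak{D}$ is tempered, the initial condition $\tilde{\mathbf{a}}_{\varepsilon}(\vartheta_{-t}\omega)$ lives in a tempered family, and hence Lemma \ref{3PertRad-multi} is applicable with $\w_{\varepsilon}(0)=\tilde{\mathbf{a}}_{\varepsilon}(\vartheta_{-t}\omega)-\mathbf{a}_{*}$.

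Next I would invoke Theorem \ref{3D-SA} to write $\mathcal{A}=\{\mathbf{a}_{*}\}$. Because $\{\mathbf{a}_{*}\}$ is invariant under the deterministic semigroup associated with \eqref{D-CBF}, one has $\u(t,\mathbf{a}_{*})=\mathbf{a}_{*}$ for all $t\geq 0$. Consequently,
\begin{align*}
\|\mathbf{a}_{\varepsilon}(\omega)-\mathbf{a}_{*}\|_{\H}
=\bigl\|\mathbf{v}_{\varepsilon}(t,\vartheta_{-t}\omega,\tilde{\mathbf{a}}_{\varepsilon}(\vartheta_{-t}\omega))-\u(t,\mathbf{a}_{*})\bigr\|_{\H}
=\|\mathbf{w}_{\varepsilon}(t,\vartheta_{-t}\omega,\mathbf{w}_{\varepsilon}(0))\|_{\H}.
\end{align*}
Letting $t\to\infty$ and applying Lemma \ref{3PertRad-multi}, we obtain
\begin{align*}
\|\mathbf{a}_{\varepsilon}(\omega)-\mathbf{a}_{*}\|_{\H}^{2}\leq \gamma^{*}_{\varepsilon}(\omega),\qquad \omega\in\Omega.
\end{align*}
Taking the supremum over $\mathbf{a}_{\varepsilon}(\omega)\in\mathcal{A}_{\varepsilon}(\omega)$ yields the upper-semicontinuity bound $\text{dist}(\mathcal{A}_{\varepsilon}(\omega),\mathcal{A})\leq\sqrt{\gamma^{*}_{\varepsilon}(\omega)}$. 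For the lower-semicontinuity direction, the singleton structure of $\mathcal{A}$ collapses the Hausdorff semi-distance to $\text{dist}(\mathcal{A},\mathcal{A}_{\varepsilon}(\omega))=\inf_{b\in\mathcal{A}_{\varepsilon}(\omega)}\|\mathbf{a}_{*}-b\|_{\H}$, which is dominated by $\|\mathbf{a}_{*}-\mathbf{a}_{\varepsilon}(\omega)\|_{\H}$ for any chosen $\mathbf{a}_{\varepsilon}(\omega)\in\mathcal{A}_{\varepsilon}(\omega)$; the same bound therefore applies, and combining both directions gives the stated estimate on $\text{dist}_{\H}$.

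Finally, the order-of-convergence claim $\sqrt{\gamma^{*}_{\varepsilon}(\omega)}\sim\varepsilon$ as $\varepsilon\to 0^{+}$ is inherited directly from the scaling $\gamma^{*}_{\varepsilon}(\omega)\sim\varepsilon^{2}$ established in Lemma \ref{3PertRad-multi}. I do not expect any genuine obstacle here — all the analytic work (uniform-in-$\varepsilon$ $\V$-bounds on $\mathbf{v}_{\varepsilon}$ via Lemma \ref{3D-uniq}, monotonicity of $\mathcal{C}$, and the absorption of the trilinear term using \eqref{b3} and Lemma \ref{3d-ab}) is already consolidated in Lemma \ref{3PertRad-multi}, and the conditions \eqref{3D-C_1}/\eqref{3D-C_3} enter only through that lemma. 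The proof is therefore a clean transcription of the Theorem \ref{Conver-multi} argument in the 3D setting.
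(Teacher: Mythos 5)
Your proposal is correct and follows essentially the same route as the paper: the authors simply state that the proof is analogous to that of Theorem \ref{Conver-add} (invariance of the random attractor, the singleton structure from Theorem \ref{3D-SA}, and the perturbation-radius bound), with Lemma \ref{3PertRad-multi} supplying the key estimate. Your write-up just spells out the details (the invariance identity, the collapse of both Hausdorff semi-distances due to the singleton, and the inheritance of the $\varepsilon^{2}$ scaling) that the paper leaves implicit.
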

	\begin{proof}
		Proof of this theorem is analogous to the proof of Theorem \ref{Conver-add}.  Using Theorem \ref{3PertRad-multi}, proof can be completed. 
	\end{proof}

	\medskip\noindent
	{\bf Acknowledgments:}    The first author would like to thank the Council of Scientific $\&$ Industrial Research (CSIR), India for financial assistance (File No. 09/143(0938)/2019-EMR-I).  M. T. Mohan would  like to thank the Department of Science and Technology (DST), Govt of India for Innovation in Science Pursuit for Inspired Research (INSPIRE) Faculty Award (IFA17-MA110).

\end{document}